\newtheorem{proposition}{Proposition}
\newtheorem{theorem}{Theorem}
\newtheorem{corollary}{Corollary}
\newtheorem{remark}{Remark}
\newtheorem{assumption}{Assumption}
\journal{European Journal of Operational Research}
\begin{document}

\begin{frontmatter}

\title{DC approximation approaches for sparse optimization }
\author[ltha]{LE THI Hoai An}
\ead{hoai-an.le-thi@univ-lorraine.fr}
\author[pdt]{PHAM DINH Tao}
\ead{pham@insa-rouen.fr}
\author[ltha] {LE Hoai Minh}
\ead{minh.le@univ-lorraine.fr}
\author[ltha] {VO Xuan Thanh}
\ead{xuan-thanh.vo@univ-lorraine.fr}

\address[ltha]{Laboratory of Theoretical and Applied Computer Science EA 3097\\
  University of Lorraine, Ile du Saulcy, 57045 Metz, France\\}

\address[pdt]{Laboratory of Mathematics, INSA - Rouen,
University of Normandie \\
  76801 Saint-Etienne-du-Rouvray Cedex, France\\}

\begin{abstract}
Sparse optimization refers to an optimization problem involving the
zero-norm in objective or constraints. In this paper, nonconvex approximation approaches for sparse optimization have been studied
with a unifying point of view in DC (Difference of Convex functions) programming framework. Considering a common DC approximation of
the zero-norm including all standard sparse inducing penalty functions, we studied the consistency between global minimums (resp. local
minimums) of approximate and original problems. We showed that, in several cases, some global minimizers (resp. local
minimizers) of the approximate problem are also those of the original problem. Using  exact penalty techniques in DC
programming, we proved stronger results for some particular approximations, namely, the approximate problem, with suitable parameters,
is equivalent to the original problem. The efficiency of several sparse inducing penalty functions have been fully analyzed. Four DCA (DC Algorithm)
schemes were developed that cover all standard algorithms in nonconvex sparse approximation approaches as special versions. They can be viewed
as, an $\ell _{1}$-perturbed algorithm / reweighted-$\ell _{1}$ algorithm
 /  reweighted-$\ell _{1}$ algorithm.  We offer a unifying nonconvex approximation approach, with solid theoretical tools as well as efficient algorithms based on DC programming and DCA, to tackle the zero-norm and sparse optimization. As an application, we implemented our methods for the feature selection in SVM (Support Vector Machine) problem and performed empirical comparative numerical experiments on the proposed algorithms with various approximation functions.
\end{abstract}

\begin{keyword} Global optimization,   Sparse Optimization, DC Approximation function, DC
Programming, DCA,  Feature selection in SVM
\end{keyword}

\end{frontmatter}

\section{Introduction}

The zero-norm on $\mathbb{R}^{n}$, denoted $\ell _{0}$-norm or $\Vert .\Vert
_{0}$, is defined by 
\begin{equation*}
\Vert x\Vert _{0}:=\left\vert \{i=1,...,n:x_{i}\neq 0\}\right\vert ,
\end{equation*}%
where $\left\vert S\right\vert $ is the cardinality of the set $S.$ %The
%useful notation $\left\vert .\right\vert _{0}$ denoting the $\ell _{0}$-norm
%on $\mathbb{R}$ allows for expressing the separability of \ $\Vert .\Vert
%_{0}$ on $\mathbb{R}^{n}$ 
%\begin{equation*}
%\Vert x\Vert _{0}=\sum_{i=1}^{n}\left\vert x_{i}\right\vert _{0}.
%\end{equation*}%
The $\ell _{0}$-norm is an important concept for modelling the sparsity of
data and plays a crucial role in optimization problems where one has to
select representative variables. Sparse optimization, which refers to an
optimization problem involving the $\ell _{0}$-norm in objective or
constraints, has many applications in various domains (in particular in
machine learning, image processing and finance), and draws increased
attention from many researchers in recent years. The function $\ell _{0}$,
apparently very simple, is lower-semicontinuous on $\mathbb{R}^{n},$ but its
discontinuity at the origin makes nonconvex programs involving $\Vert .\Vert
_{0}$ challenging. Note that although one uses the term \textquotedblright
norm\textquotedblright\ to design $\left\Vert .\right\Vert _{0}$, $%
\left\Vert .\right\Vert _{0}$ is not a norm in the mathematical sense.
Indeed, for all $x\in \mathbb{R}^{n}$ and $\lambda \neq 0$, one has $%
\left\Vert \lambda x\right\Vert _{0}=\left\Vert x\right\Vert _{0},$ which is
not true for a norm. \newline

Formally, a sparse optimization problem takes the form 
\begin{equation}
\inf \left\{ f(x,y)+\lambda \left\Vert x\right\Vert _{0}:(x,y)\in K\subset 
\mathbb{R}^{n}\mathbb{\times R}^{m}\ \right\} ,  \label{mainpbl0}
\end{equation}%
\cite{Weston03}\cite{Zhang06}where the function $f$ corresponds to a given
criterion and $\lambda $ is a positive number, called the regularization
parameter, that makes the trade-off between the criterion $f$ and the
sparsity of $x$. In some applications, one wants to control the sparsity of
solutions, the $\ell _{0}$-term is thus put in constraint\textbf{s}, and the
corresponding optimization problem is 
\begin{equation}
\inf \{f(x,y):(x,y)\in K,\Vert x\Vert _{0}\leq k\}.  \label{l0-constraint}
\end{equation}

Let us mention some important applications of sparse optimization
corresponding to these models.

\medskip

\noindent \textit{Feature selection in classification learning: } Feature
selection is one of fundamental problems in machine learning. In many
applications such as text classification, web mining, gene expression,
micro-array analysis, combinatorial chemistry, image analysis, etc, data
sets contain a large number of features, many of which are irrelevant or
redundant. Feature selection is often applied to high-dimensional data prior
to classification learning. The main goal is to select a subset of features
of a given data set while preserving or improving the discriminative ability
of the classifier. Given a training data $\left\{ a_{i},b_{i}\right\}
_{i=1,...,q}$ where each $a_{i}\in \mathbb{R}^{n}$ is labeled by its class $%
b_{i}\in Y$, the discrete set of labels. The aim of classification\ learning
is to construct a classifier function that discriminates the data points $%
A:=\left\{ a_{i}\right\} _{i=1,...,q}$ with respect to their classes$\left\{
b_{i}\right\} _{i=1,...,q}$. The embedded feature selection in
classification consists of determining the classifier which uses as few
features as possible, that leads to a sparse optimization problem like (\ref%
{mainpbl0}).

\medskip

\noindent \textit{Sparse Regression: } Given a training data set $\left\{
b_{i},a_{i}\right\} _{i=1}^{q}$ of $q$ independent and identically
distributed samples composed of explanatory variables $a_{i}\in \mathbb{R}%
^{n}$ (inputs) and response variables $b_{i}\in \mathbb{R}$ (ouputs). Let $%
b:=(b_{i})_{i=1,...,q}$ denote the vector of outputs and $%
A:=(a_{i,j})_{i=1,...,q}^{j=1,...,n}$ denote the matrix of inputs. The
problem of the regression consists in looking for a relation which can
possibly exist between $A$ and $b$, in other words, relating $b$ to a
function of $A$ and a model parameter $x$.\ Such a model parameter $x$ can
be obtained by solving the optimization problem 
\begin{equation}
\min \left\{ f(x):=\sum_{i=1}^{q}L(b_{i},a_{i}^{T}x):x\in \mathbb{R}%
^{n}\right\},  \label{rg}
\end{equation}%
where $L:\mathbb{R}^{n}\rightarrow \mathbb{R}$ is called loss function. The 
\textit{sparse regression} problem aims to find a sparse solution of the
above regression model, it takes the form of (\ref{mainpbl0}): 
\begin{equation}
\min_{x\in \mathbb{R}^{n}}\left\{ \sum_{i=1}^{q}L(b_{i},a_{i}^{T}x)+\rho
\left\Vert x\right\Vert _{0}\right\} .  \label{LS}
\end{equation}

%Regression problems have many important applications, among them sparse
%signal/image recovery and feature selection in classification.

\noindent \textit{Sparse Fisher Linear Discriminant Analysis: } Discriminant
analysis captures the relationship between multiple independent variables
and a categorical dependent variable in the usual multivariate way, by
forming a composite of the independent variables. Given a set of $q$
independent and identically distributed samples composed of explanatory
variables $a_{i}\in \mathbb{R}^{n}$ and binary response variables $b_{i}\in
\left\{ -1,1\right\} $. The idea of Fisher linear discriminant analysis is
to determine a projection of variables onto a straight line that best
separables the two classes. The line is so determined to maximize the ratio
of the variances of between and within classes in this projection, i.e.
maximize the function $f(\alpha )=\frac{\langle \alpha ,S_{B}\alpha \rangle 
}{\langle \alpha ,S_{W}\alpha \rangle },$ where $S_{B}$ and $S_{W}$ are,
respectively, the between and within classes scatter matrix (they are
symmetric positive semidefinite) given by 
\begin{equation*}
S_{B}:=(q_{+}-q_{-})(q_{+}-q_{-})^{T} , \: S_{W}=S_{+}+S_{-},
\end{equation*}%
\begin{equation*}
S_{+}=\sum_{i=1,b_{i}=+1}^{q}(x_{i}-q_{+})(x_{i}-q_{+})^{T}, \:
S_{-}=\sum_{i=1,b_{i}=-1}^{q}(x_{i}-q_{-})(x_{i}-q_{-})^{T}.
\end{equation*}%
Here, for $j\in \left\{ \pm \right\} $, $q_{j}$ is the mean vector of class $%
j$, $l_{j}$ is the number of labeled samples in class $j$. If $\alpha $ is
an optimal solution of the problem, then the classifier is given by $%
F(a)=\alpha ^{T}a+c$, $c=0.5\alpha ^{T}(q_{+}-q_{-})$.\newline
The sparse Fisher Discriminant model is defined by ($\rho >0$ ) 
\begin{equation*}
\min \{ \alpha ^{T}S_{W}\alpha +\rho \left\Vert \alpha \right\Vert _{0} :
\alpha ^{T}(q_{+}-q_{-})=b\}.
\end{equation*}

%which takes the form of (\ref{mainpbl0}).

\medskip

\noindent \textit{Compressed sensing: } Compressed sensing refers to
techniques for efficiently acquiring and reconstructing signals via the
resolution of underdetermined linear systems. Compressed\textbf{\ }sensing
concerns sparse signal representation, sparse signal recovery and sparse
dictionary learning which can be formulated as sparse optimization problems
of the form (\ref{mainpbl0}).

\medskip

\noindent \textit{Portfolio selection problem with cardinality constraint: }
In portfolio selection problem, given a set of available securities or
assets, we want to find the optimum way of investing a particular amount of
money in these assets. Each of the different ways to diversify this money
among the several assets is called a portfolio. In portfolio management one
wants to limit the number of assets to be investigated in the portfolio,
that leads to a problem of the form (\ref{l0-constraint}).

\medskip

\noindent \textit{Other applications: } Other applications of sparse
optimization include Sensor networks (\cite{Bajwa06,Baron06}), Error
correction (\cite{Candes05,Candes06}), Digital photography (\cite{Takhar06}%
), etc.

\medskip

\textbf{Existing works}. During the last two decades, research is very
active in models and methods optimization involving the zero-norm. Works can
be divided into three categories according to the way to treat the
zero-norm: convex approximation, nonconvex approximation, and nonconvex
exact reformulation.

In the machine learning community, one of the best known approaches,
belonging to the group "convex approximation", is the $\ell _{1}$
regularization approach proposed in \cite{TIB96} in the context of linear
regression, called LASSO (Least Absolute Shrinkage and Selection Operator),
which consists in replacing the $\ell _{0}$ term $\left\Vert x\right\Vert
_{0}$ by $\left\Vert x\right\Vert _{1}$, the $\ell _{1}$ -norm of the vector 
$x$. In \cite{Gribonval03}, the authors have proved that, under suitable
assumptions, a solution of the $\ell _{0}$- regularizer problem over a
polyhedral set can be obtained by solving the $\ell _{1}$- regularizer
problem. However, these assumptions are quite restrictive. Since its
introduction, several works have been developed to study the $\ell _{1}$%
-regularization technique, from the theoretical point of view to efficient
computational methods (see \cite{HAS09}, Chapter 18 for more discussions on $%
\ell _{1}$-regularized methods). The LASSO penalty has been shown to be, in
certain cases, inconsistent for variable selection and biased \cite{ZOU06}.
Hence, the Adaptive LASSO is introduced in \cite{ZOU06} in which adaptive
weights are used for penalizing different coefficients in the $\ell _{1}$%
-penalty. 
%Recently a new convex relaxation of the $\ell_2 - \ell_0$ regularization via its conjugate function
%has been developed in \cite{PLT13}.

At the same time, nonconvex continuous approaches, belonging to the second
group "nonconvex approximation" (the $\ell _{0}$ term $\left\Vert
x\right\Vert _{0}$ is approximated by a nonconvex continuous function) were
extensively developed. A variety of sparsity-inducing penalty functions have
been proposed to approximate the $\ell _{0}$ term: exponential concave
function \cite{Bradley-Mangasarian}, $\ell _{p}$-norm with $0<p<1$ \cite%
{Fu98} and $p<0$ \cite{Rao99}, Smoothly Clipped Absolute Deviation (SCAD) 
\cite{FAN01}, Logarithmic function \cite{Weston03}, Capped-$\ell _{1}$ \cite%
{Peleg08} (see (\ref{log-lp}), (\ref{scad}) and Table~\ref{tab:app-form} in
Section~\ref{approxim} for the definition of these functions). Using these
approximations, several algorithms have been developed for resulting
optimization problems, most of them are in the context of feature selection
in classification, sparse regressions or more especially for sparse signal
recovery: Successive Linear Approximation (SLA) algorithm \cite%
{Bradley-Mangasarian}, DCA (Difference of Convex functions Algorithm) based
algorithms \cite%
{XIA10,COLO06,GAS09,GUAN-GRAY13,Leetal2013,LeThietal2008a,LeThietal2009,LTNL13,LTN13,Neumann05,Ong12}%
, Local Linear Approximation (LLA) \cite{Zou08}, Two-stage $\ell _{1}$ \cite%
{Zhang09}, Adaptive Lasso \cite{ZOU06}, reweighted-$\ell _{1}$ algorithms 
\cite{Candes08}), reweighted- $\ell _{2}$ algorithms such as Focal
Underdetermined System Solver (FOCUSS) (\cite{Rao97,Rao99,Rao03}),
Iteratively reweighted least squares (IRLS) and Local Quadratic
Approximation (LQA) algorithm \cite{FAN01,Zou08}.

In the third category named nonconvex exact reformulation approaches, the $%
\ell _{0}$-regularized problem is reformulated as a continuous nonconvex
program. There are a few works in this category. In \cite{Mangasarian96},
the author reformulated the problem (\ref{mainpbl0}) in the context of
feature selection in SVM as a linear program with equilibrium constraints
(LPEC). However, this reformulation is generally intractable for large-scale
datasets. 
%We will present some main results concerning penalty techniques related to $%
%\ell _{0}$-norm allowing 
In \cite{Thiao-et-al08,PLT13} an exact penalty technique in DC programming
is used to reformulate (\ref{mainpbl0}) and (\ref{l0-constraint}) as DC
programs. In \cite{Thiao-et-al11} this technique is used for Sparse
Eigenvalue problem with $\ell _{0}$-norm in constraint functions 
\begin{equation}
\max \{x^{T}Ax:x^{T}x=1,\left\Vert x\right\Vert _{0}\leq k\},
\label{sparse eigenvalue program}
\end{equation}%
where $A\in $ $\mathbb{R}^{n\times n}$ is symmetric and $k$ an integer, and
a DCA based algorithm was investigated for the resulting problem.

Beside the three above categories, heuristic methods are developed to tackle
directly the original problem (\ref{mainpbl0}) by greedy based algorithms,
e.g. matching pursuit, orthogonal matching pursuit, \cite%
{Mallat-Zang93,Pati93}, etc.

Convex regularization approaches involve convex optimization problems which
are so far "easy" to solve, but they don't attain the solution of the $\ell
_{0}$-regularizer problem. Nonconvex approximations are, in general, deeper
than convex relaxations, and then can produce good sparsity, but the
resulting optimization problems are still difficult since they are nonconvex
and there are many local minima which are not global. Many issues have not
yet been studied or proved in the existing approximation approaches. First,
the consistency between the approximate problems and the original problem is
a very important question but still is open. Only a weak result has been
proved for two special cases in \cite{Bradley98b} (resp. \cite{Rinaldi10})
when $f$ is concave, bounded below on a polyhedral convex set $K$ and the
approximation term is an exponential concave function 
%defined in Table \ref{tab:penalty} below 
(resp. a logarithm function and/or $\ell _{p}$-norm ($p<1$)). It has been
shown in these works that the intersection of the solution sets of the
approximate problem and the original problem is nonempty. Moreover no result
on the consistency between local minimum of approximate and original
problems has been available, while most of the proposed algorithms furnish
local minima. Second, several existing algorithms lack a rigorous mathematical proof of
convergence. Hence the choice of a \textquotedblright
good\textquotedblright\ approximation remains relevant. Two crucial
questions should be studied for solving large scale problems, that are, 
\textit{how to suitably approximate the zero-norm} and \textit{which
computational method to use} for solving the resulting optimization problem.
The development of new models and algorithms for sparse optimization
problems is always a challenge for researchers in optimization and machine
learning.

\textbf{Our contributions. } We consider in this paper the problem (\ref%
{mainpbl0}) where $K$ is a polyhedral convex set in $\mathbb{R}^{n}\times 
\mathbb{R}^{m}$ and $f$ is a finite DC function on $\mathbb{R}^{n}\times 
\mathbb{R}^{m}$. We address all issues cited above for approximation
approaches and develop an unifying approach based on DC programming and DCA,
a robust, fast and scalable approach for nonconvex and nonsmooth continuous
optimization (\cite{LTP05, PLT98}). The contributions of this paper are
multiple, from both a theoretical and a computational point of view.

Firstly, considering a common DC approximate function, we prove the
consistency between the approximate problem and the original problem by
showing the link between their global minimizers as well as their local
minimizers. We demonstrate that any optimal solution of the approximate
problem is in a $\epsilon -$neighbourhood of an optimal solution to the
original problem (\ref{mainpbl0}). More strongly, if $f$ is concave and the
objective function of the approximate problem is bounded below on $K$, 
%(for instance, in feature selection in SVM)
then some optimal solutions of the approximate problem are exactly solutions
of the original problem. These new results are important and very useful for
justifying the performance of approximation approaches.\ 

Secondly, we provide an in-depth analysis of usual sparsity-inducing
functions and compare them according to suitable parameter values. This
study suggests the choice of good approximations of the zero-norm as well as
that of good parameters for each approximation. A reasonable comparison via
suitable parameters identifies Capped -$\ell _{1}$ and SCAD as the best
approximations.

Thirdly, we prove, via an exact reformulation approach by exact penalty
techniques that, with suitable parameters ($\theta >\theta _{0}$), nonconvex
approximate problems resulting from Capped -$\ell _{1}$ or SCAD functions
are equivalent to the original problem. Moreover, when the set $K$ is a box,
we can show directly (without using exact penalty techniques) the
equivalence between the original problem and the approximate Capped -$\ell
_{1}$ problem and give the value of $\theta _{0}$ such that this equivalence
holds for all $\theta >\theta _{0}$. These interesting and significant
results justify our analysis on usual sparsity-inducing functions and the
pertinence of these approximation approaches. It opens the door to study
other approximation approaches which are consistent with the original
problem.

Fourthly, we develop solution methods for all DC approximation approaches.
Our algorithms are based on DC programming and DCA, because our main
motivation is to exploit the efficiency of DCA to solve this hard problem.
We propose three DCA schemes for three different formulations of a common
model to all concave approximation functions. We show that these DCA schemes
include all standard algorithms as special versions. The fourth DCA scheme
is concerned with the resulting DC program given by the DC approximation
(nonconcave piecewise linear) function in \textbf{(}\cite{LT12}). Using DC
programming framework, we unify all solution methods into DCA\textbf{, }and
then convergence properties of our algorithms are guaranteed, thanks to
general convergence results of the generic DCA scheme. It permits to
exploit, in an elegant way, the nice effect of DC decompositions of the
objective functions to design various versions of DCA. It is worth
mentioning here the flexibility/versatility of DC\ programming and DCA: the
four algorithms can be viewed as an $\ell _{1}$-perturbed algorithm / a
reweighted-$\ell _{1}$ algorithm (intimately related to the $\ell _{1}$%
-penalized LASSO approach) / a reweighted-$\ell _{2}$ algorithm in case of
convex objective functions.

Finally, as an application, we consider the problem of feature selection in
SVM and perform a careful empirical comparison of all approaches.

The rest of the paper is organized as follows. Since DC programming and DCA
is the core of our approaches, we give in Section~\ref{dca} a brief
introduction of these theoretical and algorithmic tools. The consistency
between approximate problems and the original one, the link between their
global minimizer as well as their local minimizer are studied in Section~\ref%
{approxim}, while a comparative analysis on usual approximations is
discussed in Section~\ref{Sect.compare}. A deeper study on Capped-$\ell _{1}$
approximation and {the relation between some approximate problems and exact
penalty approaches is presented in} Section~{\ref{exact}. Solution methods
based on DCA are developed in Section \ref{DCA-prox}, while the application
of the proposed algorithms for feature selection in SVM and numerical
experiments are described in Section~\ref{num}. At last, }Section{\textbf{~}%
\ref{conclu} concludes the paper. 
%In the appendix we describe the comparative DCA schemes considered in our
%experiments.
}

\section{Outline of DC programming and DCA}

\label{dca}

Let $X$\textbf{\ }be the Euclidean space\textbf{\ }$\mathrm{I\!R}^{n}$%
\textbf{\ }equipped with the canonical inner product\textbf{\ }$\langle
.,.\rangle $\textbf{\ }and its Euclidean norm\textbf{\ }$\left\Vert
.\right\Vert .$ The dual space of $X$, denoted by\textbf{\ }$Y$\textbf{, }%
can be identified with\textbf{\ }$X$\textbf{\ }itself\textbf{.}

DC(Difference of Convex functions) Programming and DCA (DC Algorithms),
which constitute the backbone of nonconvex programming and global
optimization, are introduced in 1985 by Pham Dinh Tao in the preliminary
state, and extensively developed by Le Thi Hoai An and Pham Dinh Tao since
1994 (\cite{Block2013,lethi-website,EJOR2002,COCONUT,SIOPT2,LTP05,Concave
costEJOR07,DiscreteMath08,ConvergenceDCA,Lethietal2012b,Lethietal2012c,LTP2013,LeThietal2014,Acta,PLT98}
and references quoted therein). Their original key idea relies on the
structure DC of objective function and constraint functions in nonconvex
programs which are explored and exploited in a deep and suitable way. The
resulting DCA introduces the nice and elegant concept of approximating a
nonconvex (DC) program by a sequence of convex ones: each iteration of DCA
requires solution of a convex program.

Their popularity resides in their rich, deep and rigorous mathematical
foundations, and the versatility/flexibility, robustness, and efficiency of
DCA's compared to existing methods, their adaptation to specific structures
of addressed problems and their ability to solve real-world large-scale
nonconvex programs. Recent developments in convex programming are mainly
devoted to reformulation techniques and scalable algorithms in order to
handle large-scale problems. Obviously, they allow for enhancement of DC
programming and DCA in high dimensional nonconvex programming.

Standard DC\ programs are of the form: 
\begin{equation*}
\alpha =\inf \{f(x):=g(x)-h(x):x\in \mathrm{I\!R}^{n}\}\qquad \qquad (P_{dc})
\end{equation*}%
where $g,h\in \Gamma _{0}$($\mathrm{I\!R}^{n})$, the convex cone of all
lower semicontinuous proper (i.e., not identically equal to $+\infty )$
convex functions defined on $\mathrm{I\!R}^{n}$ and taking values in $%
\mathrm{I\!R\cup \{+\infty \}.}$ Such a function $f$ is called a DC
function, and $g-h$ a DC~decomposition of $f$ while $g$ and $h$ are the DC
components of $f.$ The convex constraint $x\in C$ can be incorporated in the
objective function of $(P_{dc})$ by using the indicator function of $C$
denoted by $\chi _{C}$ which is defined by $\chi _{C}(x)=0$ if $x\in C$, and 
$+\infty $ otherwise~: 
\begin{equation*}
\inf \{f(x):=g(x)-h(x):x\in C\mathrm{\ }\}=\inf \{\chi
_{C}(x)+g(x)-h(x):x\in \mathrm{I\!R}^{n}\}.
\end{equation*}

The vector space of DC functions, $DC({\mathbb{R}}^{n})=\Gamma _{0}({\mathbb{%
R}}^{n})-\Gamma _{0}({\mathbb{R}}^{n})$, forms a wide class encompassing
most real-life objective functions and is closed with respect to usual
operations in optimization. DC programming constitutes so an extension of
convex programming, sufficiently large to cover most nonconvex programs (%
\cite{lethi-website,lethithesis,PLT97,COCONUT,SIOPT2,LTP05,Acta,PLT98} and
references quoted therein), but not too in order to leverage the powerful
arsenal of the latter.

The conjugate of $\varphi $, denoted by $\varphi ^{\ast },$is given by%
\begin{equation*}
\varphi ^{\ast }(y):=\sup \{\langle x,y\rangle -\varphi (x):x\in \mathrm{I\!R%
}^{n}\},\forall y\in \mathrm{I\!R}^{n}.
\end{equation*}

DC duality associates the primal DC program $(P_{dc})$ with its dual $%
(D_{dc})$, which is also a DC program with the same optimal value and
defined by 
\begin{equation*}
\alpha =\inf \{h^{\ast }(y)-g^{\ast }(y):y\in \mathrm{I\!R}^{n}\},\qquad
\qquad (D_{dc})
\end{equation*}%
and studies the relation between primal and dual solution sets denoted by $
{\cal P}$ and ${\cal D}$ respectively. In DC programming we adopt
the explainable convention $+\infty -(+\infty )=+\infty $ for
avoiding ambiguity. Note that the finiteness of $\alpha $\ implies
that dom $g\subset $ dom $h$ and dom $h^{\ast }\subset $ \ dom $g^{\ast }$,
where the effective domain of $\varphi \in \Gamma _{0}$($\textrm{I\!R}^{n})$
is dom $\varphi :=\{x\in \textrm{I\!R}^{n}:\varphi (x)<+\infty \}.$
The function $\varphi \in \Gamma _{0}$($\textrm{I\!R}^{n})$ is polyhedral
convex if it is the sum of the indicator function of a nonempty polyhedral
convex set and the pointwise supremum of a finite collection of affine
functions. Polyhedral DC program is a DC program in which at least one of
the functions $g$ and $h$ is polyhedral convex. Polyhedral DC programming,
which plays a key role in nonconvex programming and global optimization, has
interesting properties (from both a theoretical and an algorithmic point of
view) on local optimality conditions and the finiteness of DCA's convergence.

For $\varphi \in \Gamma _{0}$($\mathrm{I\!R}^{n})$, the subdifferential of $%
\varphi $ at $x_{0}\in $ dom $\varphi ,$ denoted by $\partial \varphi
(x_{0}),$ is defined by %(\citep{Rockafellar1976})
\begin{equation}
\partial \varphi (x_{0}):=\{y\in \mathrm{I\!R}^{n}:\varphi (x)\geq \varphi
(x_{0})+\langle x-x_{0},y\rangle ,\forall x\in \mathrm{I\!R}^{n}\}.
\label{subdiff}
\end{equation}%
The subdifferential $\partial \varphi (x_{0})$ is a closed convex set, which
generalizes the derivative of $\varphi $ in the sense that $\varphi $ is
differentiable at $x_{0}$ if and only if $\partial \varphi (x_{0})$ is
reduced to a singleton, that is nothing but $\{\bigtriangledown \varphi
(x_{0})\}.$

DC programming investigates the structure of $DC({\mathbb{R}}^{n})$, DC
duality and local and global optimality conditions for DC programs. The
complexity of DC programs clearly lies in the distinction between local and
global solution and, consequently; the lack of verifiable global optimality
conditions.

We have developed necessary local optimality conditions for the primal DC
program $(P_{dc})$, by symmetry those relating to dual DC program\emph{\ }$%
(D_{dc})$\emph{\ }are trivially deduced:%
\begin{equation}
\partial h(x^{\ast })\cap \partial g(x^{\ast })\neq \emptyset
\label{critical point}
\end{equation}%
(such a point $x^{\ast }$ is called critical point of $g-h$ or (\ref%
{critical point}) a generalized Karusk-Kuhn-Tucker (KKT) condition for $%
(P_{dc})$), and 
\begin{equation}
\emptyset \neq \partial h(x^{\ast })\subset \partial g(x^{\ast }).
\label{subdifferential_inclusion}
\end{equation}%
The condition (\ref{subdifferential_inclusion}) is also sufficient (for
local optimality) in many important classes of DC programs. In particular it
is sufficient for the next cases quite often encountered in practice:

\begin{itemize}
\item In polyhedral DC programs with $h$ being a polyhedral convex function.
In this case, if $h$ is differentiable at a critical point $x^{\ast }$, then 
$x^{\ast }$ is actually a local minimizer for $(P_{dc})$. Since a convex
function is differentiable everywhere except for a set of measure zero, one
can say that a critical point $x^{\ast }$ is almost always a local minimizer
for $(P_{dc})$.

\item In case the function $f$ is locally convex at $x^{\ast }$. Note that,
if $h$ is polyhedral convex, then $f=g-h$ is locally convex everywhere $h$
is differentiable.
\end{itemize}

The transportation of global solutions between $(P_{dc})$ and $(D_{dc})$ is
expressed by: 
\begin{equation}
\lbrack \bigcup\limits_{y^{\ast }\in \mathcal{D}}\,\partial g^{\ast
}(y^{\ast })]\subset \mathcal{P}\text{ },\text{ }[\bigcup\limits_{x^{\ast
}\in \mathcal{P}}\,\partial h(x^{\ast })]\subset \mathcal{D}
\label{global transportation}
\end{equation}%
The first (second) inclusion becomes equality if the function $h$ (resp. $%
g^{\ast }$) is subdifferentiable on $\mathcal{P}$ (resp. $\mathcal{D}$).
They show that solving a DC program implies solving its dual. Note also
that, under technical conditions, this transportation also holds for local
solutions of $(P_{dc})$\emph{\ }and $(D_{dc})$. (\cite%
{lethi-website,lethithesis,PLT97,COCONUT,SIOPT2,LTP05,Acta,PLT98} and
references quoted therein).

\noindent \textbf{Philosophy of DCA:} DCA is based on local optimality
conditions and duality in DC programming. The main original idea of DCA is
simple, it consists in approximating a DC\ program by a sequence of convex
programs: each iteration $k$ of DCA approximates the concave part $-h$ by
its affine majorization (that corresponds to taking $y^{k}\in \partial
h(x^{k}))$ and minimizes the resulting convex function.

The generic DCA scheme can be described as follows:

\noindent\textbf{DCA scheme}

\noindent \textbf{Initialization: }Let $x^{0}\in \mathrm{I\!R}^{n}$ be a
guess, set $k:=0.$

\noindent \textbf{Repeat }

\begin{itemize}
\item Calculate some$\ y^{k}\in \partial h(x^{k})$

\item Calculate $x^{k+1}\in \arg \min \{g(x)-[h(x^{k})+\langle
x-x^{k},y^{k}\rangle ]:x\in \mathrm{I\!R}^{n}\}\quad (P_{k})$

\item Increasing $k$ by $1$
\end{itemize}

\noindent \textbf{Until }convergence of $\{x^{k}\}.$

\noindent Note that $(P_{k})$ is a convex optimization problem and is so far
"easy" to solve.

\noindent Convergence properties of DCA and its theoretical basis can be
found in \cite{lethithesis,PLT97,COCONUT,LTP05,Acta,PLT98,PLT13,ANT14}. For
instance it is important to mention that (for the sake of simplicity we omit
here the dual part of DCA).

\begin{itemize}
\item[i)] DCA is a descent method without linesearch (the sequence\ $%
\{g(x^{k})-h(x^{k})\}$ is\ decreasing) but with global convergence (DCA
converges from any starting point).

\item[ii)] If $g(x^{k+1})-h(x^{k+1})=g(x^{k})-h(x^{k})$, then $x^{k}$ is a
critical point of $g-h$. In such a case, DCA terminates at $k$-th iteration.

\item[iii)] If the optimal value $\alpha \ $of problem $(P_{dc})$ is finite
and the infinite sequence $\{x^{k}\}\ $is bounded, then every limit point $%
x^{\ast }$ of the sequence $\{x^{k}\}$\ is a critical point of $g$ $-$ $h$.

\item[iv)] DCA has a \textit{linear convergence} for DC programs.

\item[v)] DCA has a \textit{finite convergence} for polyhedral DC programs.
Moreover, if $h$ is polyhedral and $h$ is differentiable at $x^{\ast }$ then 
$x^{\ast }$ is a local optimizer of $(P_{dc})$.
\end{itemize}

vi) In DC programming with subanalytic data, the whole sequence\textbf{\ }$%
\{x^{k}\}\ $generated by DCA converges and DCA's rate convergence is stated.

It is worth mentioning that the construction of DCA involves DC components $%
g $ and $h$ but not the function $f$ itself. Hence, for a DC program, each
DC decomposition corresponds to a different version of DCA. Since a DC
function $f$ \ has infinitely many DC decompositions which have crucial
implications on the qualities (speed of convergence, robustness, efficiency,
globality of computed solutions,\ldots ) of DCA, the search of a
\textquotedblleft good\textquotedblright\ DC decomposition is important from
an algorithmic point of view. For a given DC program, the choice of \
optimal DC~decompositions is still open. Of course, this depends strongly on
the very specific structure of the problem being considered. In order to
tackle the large-scale setting, one tries in practice to choose\textbf{\ }$g$%
\textbf{\ }and\textbf{\ }$h$\textbf{\ }such that sequences\textbf{\ }$%
\{x^{k}\}$\textbf{\ }and\textbf{\ }$\{y^{k}\}$\textbf{\ }can be easily
calculated\textbf{, }i.e., either they are in an explicit form or their%
\textbf{\ }computations are inexpensive.\textbf{\ }Very often in practice,
the sequence\textbf{\ }$\{y^{k}\}$\textbf{\ }is explicitly computed because
the calculation of a subgradient of\textbf{\ }$h$\textbf{\ }can be\textbf{\ }%
explicitly obtained by using the usual rules for calculating subdifferential
of convex functions\textbf{. }But the solution of the convex program\textbf{%
\ }$(P_{k})$\textbf{, }if not\textbf{\ }explicit, should be achieved by
efficient algorithms well-adapted to its special structure, in order to
handle the large-scale setting\textbf{. }

How to develop an efficient algorithm based on the generic DCA scheme for a
practical problem is thus a sensible question to be studied. Generally, the
answer depends on the specific structure of the problem being considered.
The solution of a nonconvex program $(P_{dc})$ by DCA must be composed of
two stages: the search of an \textit{appropriate} DC decomposition of $f$
and that of a \textit{good} initial point.

DC programming and DCA have been successfully applied for modeling and
solving many and various\ nonconvex programs from different fields of
Applied Sciences, especially in machine learning (see also the more complete
list of references in \cite{lethi-website}). Note that with appropriate DC
decompositions and suitably equivalent DC reformulations, DCA permits to
recover most of standard methods in convex and nonconvex programming as
special cases. In particular, DCA is a global algorithm (i.e. providing
global solutions) when applied to convex programs recast as DC programs and
therefore DC programming and DCA can be used to build efficiently customized
algorithms for solving convex programs generated by DCA itself.

For a complete study of DC programming and DCA the reader is referred to (%
\cite{lethithesis,Acta,COCONUT,PLT97,LTP05,PLT98,PLT13,ANT14} and the
references quoted therein).

%+++++++++++++++

\section{DC approximation approaches: consistency results}

\label{approxim}

We focus on the sparse optimization problem with $\ell _{0}$-norm in the
objective function, called the $\ell _{0}$-problem, that takes the form 
\begin{equation}
\min \left\{ F(x,y)=f(x,y)+\lambda \Vert x\Vert _{0}:(x,y)\in K\right\} ,
\label{prob:l0}
\end{equation}%
where $\lambda $ is a positive parameter, $K$ is a convex set in $\mathbb{R}%
^{n}\times \mathbb{R}^{m}$ and $f$ is a finite DC function on $\mathbb{R}%
^{n}\times \mathbb{R}^{m}$. Suppose that $f$ has a DC decomposition 
\begin{equation}
f(x,y)=g(x,y)-h(x,y)\quad \forall (x,y)\in \mathbb{R}^{n}\times \mathbb{R}%
^{m},  \label{eqn:fDC}
\end{equation}%
where $g,h$ are finite convex functions on $\mathbb{R}^{n}\times \mathbb{R}%
^{m}$. Through the paper, for a DC function $f:=g-h$, $\partial f(x,y)$
stands for the set $\partial g(x,y)-\partial h(x,y)$. 
%~\forall (x,y)\in \mathbb{R}^{n}\times \mathbb{R}^{m}$. 
More precisely, the notation $(\overline{x},\overline{y})\in \partial f(x,y)$
means that $(\overline{x},\overline{y})=(x_{g},y_{g})-(x_{h},y_{h})$ for
some $(x_{g},y_{g})\in \partial g(x,y)$, $(x_{h},y_{h})\in \partial h(x,y)$.

Define the step function $s:\mathbb{R}\rightarrow \mathbb{R}$ by $s(t)=1$
for $t\neq 0$ and $s(t)=0$ otherwise. Then $\Vert x\Vert
_{0}=\sum_{i=1}^{n}s(x_{i})$. The idea of approximation methods is to
replace the discontinuous step function by a continuous approximation $%
r_{\theta }$, where $\theta >0$ is a parameter controling the tightness of
approximation. This leads to the approximate problem of the form 
\begin{equation}
\min \left\{ F_{r_{\theta }}(x,y)=f(x,y)+\lambda \sum_{i=1}^{n}r_{\theta
}(x_{i}):(x,y)\in K\right\} .  \label{prob:app}
\end{equation}

\begin{assumption}
\label{assump} $\{r_\theta\}_{\theta > 0}$ is a family of functions $\mathbb{%
R} \to \mathbb{R}$ satisfying the following properties:

\begin{itemize}
\item[i)] $\lim_{\theta \to +\infty}r_\theta (t) = s(t)$, $\forall t \in 
\mathbb{R}$. 
%\item[ii)] For any $\theta > 0$, $r_\theta$ is decreasing on $(-\infty,0]$ and increasing on $[0,\infty)$.

\item[ii)] For any $\theta >0$, $r_{\theta }$ is even, i.e. $r_{\theta
}(t)=r_{\theta }(|t|)~\forall t\in \mathbb{R})$ and $r_{\theta }$ is
increasing on $[0,+\infty )$.

\item[iii)] For any $\theta >0$, $r_{\theta }$ is a DC function which can be
represented as 
\begin{equation*}
r_{\theta }(t)=\varphi _{\theta }(t)-\psi _{\theta }(t)\quad t\in \mathbb{R},
\end{equation*}%
where $\varphi _{\theta },\psi _{\theta }$ are finite convex functions on $%
\mathbb{R}$.

\item[iv)] $t \mu \geq 0$ $\forall t\in \mathbb{R},\mu\in \partial r_{\theta
}(t)$.where $\partial r_{\theta }(t)=\{u-v:u\in \partial \varphi
_{\theta}(t),v\in \partial \psi _{\theta }(t)\}$.

\item[v)] For any $a\leq b$ and $0\notin \lbrack a,b]$: $\lim\limits_{\theta
\rightarrow +\infty }\sup \left\{ |z|:z\in \partial r_{\theta }(t),t\in
\lbrack a,b]\right\} =0.$
\end{itemize}
\end{assumption}

First of all, we observe that by assumption ii) above, we get another
equivalent form of \eqref{prob:app} 
\begin{equation}
\min_{(x,y,z)\in \Omega _{1}}\overline{F}_{r_{\theta
}}(x,y,z):=f(x,y)+\lambda \sum_{i=1}^{n}{r_{\theta }}(z_{i}),
\label{prob:app-z}
\end{equation}%
where 
\begin{equation*}
\Omega _{1}=\{(x,y,z):(x,y)\in K,|x_{i}|\leq z_{i}~~\forall i=1,\dots ,n\}.
\end{equation*}%
Indeed, \eqref{prob:app} and \eqref{prob:app-z} are equivalent in the
following sense.

\begin{proposition}
\label{prop:eqv} A point $(x^*,y^*) \in K$ is a global (resp. local)
solution of the problem \eqref{prob:app} if and only if $(x^*,y^*,|x^*|)$ is
a global (resp. local) solution of the problem \eqref{prob:app-z}. Moreover,
if $(x^*,y^*,z^*)$ is a global solution of \eqref{prob:app-z} then $%
(x^*,y^*) $ is a global solution of \eqref{prob:app}.
\end{proposition}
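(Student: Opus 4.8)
The plan is to introduce the two obvious transfer maps between the feasible sets and reduce everything to a single monotonicity inequality. Define the \emph{lift} $\iota(x,y) = (x,y,|x|)$, which sends $K$ into $\Omega_1$, and the \emph{projection} $\pi(x,y,z) = (x,y)$, which sends $\Omega_1$ onto $K$. First I would record that, by item~ii) of Assumption~\ref{assump}, each $r_\theta$ is even and nondecreasing on $[0,+\infty)$; hence for $(x,y,z)\in\Omega_1$, where $0\le|x_i|\le z_i$, we get $r_\theta(x_i)=r_\theta(|x_i|)\le r_\theta(z_i)$ for every $i$, and summing yields
\[
\overline{F}_{r_\theta}(x,y,z)\ \ge\ F_{r_\theta}(\pi(x,y,z))\qquad\text{for all }(x,y,z)\in\Omega_1,
\]
with equality exactly when $z=|x|$; in particular $\overline{F}_{r_\theta}(\iota(x,y))=F_{r_\theta}(x,y)$ for all $(x,y)\in K$.

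For the global equivalence I would argue in both directions from this. If $(x^*,y^*)$ solves \eqref{prob:app}, then for any $(x,y,z)\in\Omega_1$ the inequality together with optimality at $(x^*,y^*)$ gives $\overline{F}_{r_\theta}(x,y,z)\ge F_{r_\theta}(\pi(x,y,z))\ge F_{r_\theta}(x^*,y^*)=\overline{F}_{r_\theta}(x^*,y^*,|x^*|)$, so $(x^*,y^*,|x^*|)$ solves \eqref{prob:app-z}. Conversely, if $(x^*,y^*,|x^*|)$ solves \eqref{prob:app-z}, then any $(x,y)\in K$ pushed through the lift gives $F_{r_\theta}(x,y)=\overline{F}_{r_\theta}(\iota(x,y))\ge\overline{F}_{r_\theta}(x^*,y^*,|x^*|)=F_{r_\theta}(x^*,y^*)$. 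For the final assertion, given any global solution $(x^*,y^*,z^*)$ of \eqref{prob:app-z}, the monotonicity inequality gives $\overline{F}_{r_\theta}(x^*,y^*,|x^*|)\le\overline{F}_{r_\theta}(x^*,y^*,z^*)$ while $(x^*,y^*,|x^*|)\in\Omega_1$; optimality of $(x^*,y^*,z^*)$ then forces equality, so $(x^*,y^*,|x^*|)$ is itself a global solution of \eqref{prob:app-z} and the previous implication applies.

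The local equivalence is the same two arguments, with neighbourhoods transported along the (continuous) maps $\iota$ and $\pi$. If $(x^*,y^*)$ is locally optimal for \eqref{prob:app} on a neighbourhood $U$, I would pick any neighbourhood $V$ of $(x^*,y^*,|x^*|)$ with $\pi(V)\subset U$ (for instance $V=U\times\mathbb{R}^n$, or a ball inside it) and rerun the first computation on $V\cap\Omega_1$. Conversely, if $(x^*,y^*,|x^*|)$ is locally optimal for \eqref{prob:app-z} on a neighbourhood $V$, continuity of $\iota$ produces a neighbourhood $U$ of $(x^*,y^*)$ with $\iota(U)\subset V$, and the lift computation on $U\cap K$ finishes the proof. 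This bookkeeping of neighbourhoods through $\iota$ and $\pi$ is the only step needing any care; everything else is the one monotonicity inequality. I would also point out that the final assertion has no local counterpart: a local solution $(x^*,y^*,z^*)$ of \eqref{prob:app-z} with $z^*\neq|x^*|$ leaves some constraints $|x_i|\le z_i$ slack, hence carries no information about $F_{r_\theta}$ near $(x^*,y^*)$.
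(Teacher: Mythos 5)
Your proof is correct and follows essentially the same route as the paper: the single monotonicity inequality $\overline{F}_{r_\theta}(x,y,z)\ge \overline{F}_{r_\theta}(x,y,|x|)=F_{r_\theta}(x,y)$ on $\Omega_1$ settles the global statements, and the local statements are handled by transporting neighbourhoods through the projection $(x,y,z)\mapsto(x,y)$ and the lift $(x,y)\mapsto(x,y,|x|)$, which is exactly the paper's remark about balls of radius $\delta$ and $\delta/2$. The only quibble is your claim that equality holds \emph{exactly} when $z=|x|$ — this fails when $r_\theta$ is merely nondecreasing (e.g. capped-$\ell_1$) — but your argument never uses that direction, so nothing is affected.
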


\begin{proof}
Since $r_\theta$ is an increasing function on $[0,+\infty)$, we have
\[
\overline{F}_{r_\theta}(x,y,z) \ge \overline{F}_{r_\theta}(x,y,|x|) = F_{r_\theta}(x,y) \quad \forall (x,y,z) \in \Omega_1.
\]
Then the conclusion concerning global solutions is trivial. The result on local solutions also follows by remarking that if $(x,y,z) \in B((x^*,y^*,z^*),\delta)$~\footnote{$B(u^*,\delta)$ stands for the set of vectors $u \in \mathbb{R}^d$  such that $\|u-u^*\| < \delta$}  then $(x,y) \in B((x^*,y^*),\delta)$,
and if $(x,y) \in B((x^*,y^*),\frac{\delta}{2})$   then $(x,y,|x|) \in B((x^*,y^*,|x^*|),\delta)$.
\end{proof}

In standard nonconvex approximation approaches to $\ell _{0}$-problem, all
the proposed approximation functions $r_{\theta }$ are even and concave
increasing on $[0,+\infty )$ (see Table \ref{tab:app-form} below) and the
approximate problems were often considered in the form (\ref{prob:app-z}).
Here we study the general case where $r_{\theta }$ is a DC function and
consider both problems (\ref{prob:app}) and (\ref{prob:app-z}) in order to
exploit the nice effect of DC decompositions of a DC program.

Now we show the link between the original problem (\ref{prob:l0}) and the
approximate problem \eqref{prob:app}. This result gives \textit{a
mathematical} \textit{foundation }of approximation methods.

\begin{theorem}
\label{thm:glo} Let $\mathcal{P},\mathcal{P}_\theta$ be the solution sets of
the problem \eqref{prob:l0} and \eqref{prob:app} respectively.

\begin{enumerate}
\item[i)] Let $\{\theta _{k}\}$ be a sequence of nonnegative numbers such
that $\theta _{k}\rightarrow +\infty $ and $\{(x^{k},y^{k})\}$ be a sequence
such that $(x^{k},y^{k})\in \mathcal{P}_{\theta _{k}}$ for any $k$. If $%
(x^{k},y^{k})\rightarrow (x^{\ast },y^{\ast })$, then $(x^{\ast },y^{\ast
})\in \mathcal{P}$.

\item[ii)] If $K$ is compact, then for any $\epsilon >0$ there is $\theta
(\epsilon )>0$ such that 
\begin{equation*}
\mathcal{P}_{\theta }\subset \mathcal{P}+B(0,\epsilon )\quad \forall \theta
\geq \theta (\epsilon ).
\end{equation*}

\item[iii)] If there is a finite set $\mathcal{S}$ such that $\mathcal{P}%
_{\theta }\cap \mathcal{S}\neq \emptyset ~\forall \theta >0$, then there
exists $\theta _{0}\geq 0$ such that 
\begin{equation*}
\mathcal{P}_{\theta }\cap \mathcal{S}\subset \mathcal{P}\quad \forall \theta
\geq \theta _{0}.
\end{equation*}
\end{enumerate}
\end{theorem}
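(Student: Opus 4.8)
The plan is to prove the three parts in order, exploiting the pointwise convergence $r_\theta \to s$ (Assumption~\ref{assump} i)) together with the monotonicity properties, and to treat the original objective $F$ and the approximate objectives $F_{r_\theta}$ as a family whose values can be compared on the relevant feasible points.

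For part i), I would argue as follows. Fix any $(x,y) \in K$. Since $(x^k,y^k) \in \mathcal{P}_{\theta_k}$, we have $F_{r_{\theta_k}}(x^k,y^k) \le F_{r_{\theta_k}}(x,y)$. The right-hand side converges to $F(x,y)$ by Assumption~\ref{assump} i) applied coordinatewise, together with continuity of $f$. For the left-hand side, the key point is a lower-semicontinuity estimate: because each $r_\theta$ is even and increasing on $[0,+\infty)$ and $r_\theta \ge 0$ can be arranged (or at least $r_\theta$ is bounded below on compacts uniformly), one shows $\liminf_k \sum_i r_{\theta_k}(x^k_i) \ge \sum_i s(x^*_i) = \|x^*\|_0$. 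Concretely, for each index $i$ with $x^*_i \ne 0$ one has $|x^k_i|$ bounded away from $0$ for large $k$, so $r_{\theta_k}(x^k_i)$ is, by monotonicity, at least $r_{\theta_k}(\varepsilon_i)$ for a fixed $\varepsilon_i>0$, which tends to $1$; for indices with $x^*_i = 0$ one uses $r_{\theta_k} \ge 0$. Combining, $F(x^*,y^*) = f(x^*,y^*) + \lambda\|x^*\|_0 \le \liminf_k F_{r_{\theta_k}}(x^k,y^k) \le \lim_k F_{r_{\theta_k}}(x,y) = F(x,y)$, and since $(x,y) \in K$ was arbitrary, $(x^*,y^*) \in \mathcal{P}$.

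For part ii), I would argue by contradiction using compactness of $K$. If the conclusion fails, there is $\epsilon>0$, a sequence $\theta_k \to +\infty$, and points $(x^k,y^k) \in \mathcal{P}_{\theta_k}$ with $(x^k,y^k) \notin \mathcal{P} + B(0,\epsilon)$, i.e. $\operatorname{dist}((x^k,y^k),\mathcal{P}) \ge \epsilon$. By compactness of $K$, pass to a convergent subsequence $(x^k,y^k) \to (x^*,y^*) \in K$. By part i), $(x^*,y^*) \in \mathcal{P}$, contradicting $\operatorname{dist}((x^k,y^k),\mathcal{P}) \ge \epsilon$ in the limit. (One should also note $\mathcal{P}\ne\emptyset$, which follows since $K$ compact and $F$ is lsc, so the distance is well-defined.)

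For part iii), since $\mathcal{S}$ is finite, it suffices to show that each $(x,y) \in \mathcal{S}$ that lies in $\mathcal{P}_\theta$ for arbitrarily large $\theta$ actually lies in $\mathcal{P}$, and then to take $\theta_0$ larger than the finitely many thresholds witnessing the contrary behaviour of the remaining points of $\mathcal{S}$. Precisely: let $\mathcal{S}_\infty := \{(x,y) \in \mathcal{S} : (x,y) \in \mathcal{P}_{\theta_k} \text{ for some } \theta_k \to +\infty\}$. For $(x,y) \in \mathcal{S}_\infty$, apply part i) with the constant sequence $(x^k,y^k) = (x,y)$ along the $\theta_k$ witnessing membership, to conclude $(x,y) \in \mathcal{P}$. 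For each $(x,y) \in \mathcal{S} \setminus \mathcal{S}_\infty$ there is $\theta(x,y)$ with $(x,y) \notin \mathcal{P}_\theta$ for all $\theta \ge \theta(x,y)$; set $\theta_0 := \max_{(x,y) \in \mathcal{S}\setminus\mathcal{S}_\infty} \theta(x,y)$ (and $\theta_0 := 0$ if that set is empty). Then for $\theta \ge \theta_0$, $\mathcal{P}_\theta \cap \mathcal{S} \subset \mathcal{S}_\infty \subset \mathcal{P}$, using also the hypothesis that $\mathcal{P}_\theta \cap \mathcal{S} \ne \emptyset$ to know the intersection is among the "good" points.

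The main obstacle is the lower-semicontinuity estimate $\liminf_k \sum_i r_{\theta_k}(x^k_i) \ge \|x^*\|_0$ in part i): it requires a uniform-in-$k$ lower bound on $r_{\theta_k}(x^k_i)$ for the nonzero coordinates, which comes from monotonicity of $r_\theta$ on $[0,+\infty)$ plus the fact that $|x^k_i|$ stays bounded below, and a nonnegativity (or uniform lower boundedness) control for the vanishing coordinates. Once that estimate is in hand, parts ii) and iii) are routine compactness and finiteness arguments built on part i).
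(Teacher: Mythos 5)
Your proposal is correct and follows essentially the same route as the paper's proof: part i) compares optimal values along the sequence and establishes $\liminf_{k} r_{\theta_k}(x_i^{k}) \ge s(x_i^{\ast})$ coordinatewise via evenness/monotonicity plus pointwise convergence, and parts ii) and iii) are the same compactness and finiteness arguments (your threshold argument in iii) is just the contrapositive of the paper's constant-subsequence contradiction). The only point to tidy is the coordinates with $x_i^{\ast}=0$: nonnegativity of $r_\theta$ is not part of Assumption 1, and ``bounded below on compacts'' would not suffice; the clean justification (used in the paper) is simply $r_{\theta_k}(x_i^{k}) \ge r_{\theta_k}(0) \to s(0)=0$, which also legitimizes your remark that $r_\theta \ge 0$ ``can be arranged'' by normalizing $r_\theta - r_\theta(0)$.
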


\begin{proof}
i) Let $(x,y)$ be arbitrary in $K$. For any $k$, since $(x^k,y^k) \in \mathcal{P}_{\theta_k}$, we have
\begin{equation}\label{eqn:thm1}
f(x,y) + \lambda \sum_{i=1}^n r_{\theta_k}(x_i) \ge f(x^k,y^k) + \lambda \sum_{i=1}^n r_{\theta_k}(x^k_i).
\end{equation}
By Assumption \ref{assump} ii), if $x^*_i = 0$, we have
\[
\liminf_{k\to +\infty} r_{\theta_k}(x^k_i) \ge \liminf_{k\to +\infty} r_{\theta_k}(0) = 0.
\]
If $x^*_i \ne 0$, there exist $a_i \le b_i$ and $k_i \in \mathbb{N}$ such that $0 \ne [a_i, b_i]$ and $x^k_i \in [a_i,b_i]$ for all $k \ge k_i$. Then we have
\[
|r_{\theta_k}(x^k_i) - s(x^*_i)| \le \max\left\{|r_{\theta_k}(a_i) - s(a_i)|,|r_{\theta_k}(b_i) - s(b_i)| \right\} \quad \forall k \ge k_i.
\]
Since $\lim_{k \to +\infty} r_{\theta_k}(a_i) = s(a_i)$ and $\lim_{k \to +\infty} r_{\theta_k}(b_i) = s(b_i)$, we have $\lim_{k \to +\infty} r_{\theta_k}(x^k_i) = s(x^*_i)$. Note that $f$ is continuous, taking $\liminf$ of both sides of \eqref{eqn:thm1}, we get
\[
f(x,y) + \lambda \sum_{i=1}^n s(x_i) \ge f(x^*,y^*) + \lambda \sum_{i=1}^n \liminf_{k \to \infty} r_{\theta_k}(x^k_i) \ge f(x^*,y^*) + \lambda \sum_{i=1}^n s(x^*_i).
\]
Thus, $F(x,y) \ge F(x^*,y^*)$ for any $(x,y) \in K$, or $(x^*,y^*) \in \mathcal{P}$.

ii) We assume by contradiction that there exists $\epsilon > 0$ and a sequence $\{\theta_k\}$ such that $\theta_k \to +\infty$, and for any $k$ there is $(x^k,y^k) \in \mathcal{P}_{\theta_k} \backslash (\mathcal{P} + B(0,\epsilon))$. Since $\{(x^k,y^k)\} \subset K$ and $K$ is compact, there exists a subsequence $\{(x^{k_l},y^{k_l})\}$ of $\{(x^k,y^k)\}$ converges to a point $(x^*,y^*) \in K$. By i), we have $(x^*,y^*) \in \mathcal{P}$. However, $\{(x^{k_l},y^{k_l})\} \subset K \backslash (\mathcal{P} + B(0,\epsilon))$ that is a closed set, so $(x^*,y^*) \in K \backslash (\mathcal{P} + B(0,\epsilon))$. This contradicts the fact that $(x^*,y^*) \in \mathcal{P}$. %following i).

iii) Assume by contradiction that there is a sequence $\{\theta_k\}$ such that $\theta_k \to +\infty$, 
and for any $k$ there is $(x^k,y^k) \in (\mathcal{P}_{\theta_k} \cap \mathcal{S}) \backslash \mathcal{P}$. Since $\mathcal{S}$ is finite, we can extract a subsequence such that $(x^{k_l},y^{k_l}) = (\overline{x},\overline{y})~\forall l$. Then we have $(\overline{x},\overline{y}) \notin \mathcal{P}$. This contradicts the fact that    $(\overline{x},\overline{y}) \in \mathcal{P}$ following i).  
\end{proof}

\begin{remark}
The assumption that $r_{\theta }$ is an even function is not needed for
proving this theorem.\ More precisely, the theorem still holds when the
assumption ii) is replaced by \textquotedblright for any $\theta >0$, $%
r_{\theta }$ is decreasing on $(-\infty ,0]$ and is increasing on $%
[0,+\infty ).$ For the zero-norm, since the step function is even, it is
natural to consider its approximation $r_{\theta }$ as an even function.
\end{remark}

Theorem \ref{thm:glo} shows that any optimal solution of the approximate
problem (\ref{prob:app}) is in a $\epsilon -$neighboohord of an optimal
solution to the original problem (\ref{prob:l0}), and the tighter
approximation of $\ell _{0}$-norm is, the better approximate solutions are.
Moreover, if there is a finite set $\mathcal{S}$ such that $\mathcal{P}%
_{\theta }\cap \mathcal{S}\neq \emptyset ~\forall \theta >0,$ then any
optimal solution of the approximate problem (\ref{prob:app}) contained in $%
\mathcal{S}$ solves also the problem (\ref{prob:l0}). By considering the
equivalent problem (\ref{prob:app-z}), we show in the following Corollary
that such a set $\mathcal{S}$ exists in several contexts of applications
(for instance, in feature selection in SVM).

\begin{corollary}
\label{crl:1} Suppose that $r$ is concave on $[0,+\infty )$, $K$ is a
polyhedral convex set having at least a vertex and $f$ is concave, bounded
below on $K$. Then $\Omega _{1}$ defined in \eqref{prob:app-z} is also a
polyhedral convex set having at least a vertex\textbf{. }Let $\mathcal{V}$
be the vertex set of $\Omega _{1}$ and 
\begin{equation*}
\overline{\mathcal{P}}_{\theta }=\left\{ (x,y):\exists z\in \mathbb{R}^{n}%
\text{ s.t. }(x,y,z)\in \mathcal{V}\text{ is a global solution of %
\eqref{prob:app-z}}\right\} .
\end{equation*}%
Then $\overline{\mathcal{P}}_{\theta }\neq \emptyset ~\forall \theta >0$ and
there exists $\theta _{0}>0$ such that $\overline{\mathcal{P}}_{\theta
}\subset \mathcal{P}$,$~\forall \theta \geq \theta _{0}$.
\end{corollary}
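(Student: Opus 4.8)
The plan is to establish the three assertions in sequence and then derive the last inclusion from Theorem~\ref{thm:glo}~iii). For the first assertion, each constraint $|x_i|\le z_i$ is equivalent to the pair of affine inequalities $x_i-z_i\le 0$ and $-x_i-z_i\le 0$, so $\Omega_1$ is the intersection of the polyhedron $K\times\mathbb{R}^n$ with $2n$ half-spaces and is therefore polyhedral convex; it is nonempty because $(x,y,|x|)\in\Omega_1$ whenever $(x,y)\in K$. A nonempty polyhedron has a vertex exactly when it contains no line, so I would argue $\Omega_1$ is line-free: any line in $\Omega_1$ projects to an affine set in $K$, which must be a single point since $K$ has a vertex; the remaining direction would then lie in the $z$-variables only, and a nonzero such direction $w$ violates $z_i\ge|x_i|\ge 0$ for an index with $w_i\ne 0$ once the parameter runs to $+\infty$ or $-\infty$ according to the sign of $w_i$.

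For the second assertion I would show that \eqref{prob:app-z} attains its infimum at a vertex of $\Omega_1$. On $\Omega_1$ one has $z_i\ge|x_i|\ge 0$, so, using that $r_\theta$ is concave and nondecreasing on $[0,+\infty)$ and that $f$ is concave, $\overline{F}_{r_\theta}$ is concave on $\Omega_1$ and bounded below there (because $f(x,y)\ge\inf_K f>-\infty$ and $r_\theta(z_i)\ge r_\theta(0)$). I then invoke the classical fact that a concave function that is bounded below on a line-free polyhedral convex set attains its infimum at a vertex: writing $\Omega_1=\mathrm{conv}(\mathcal{V})+\mathrm{cone}(e_1,\dots,e_s)$, Jensen's inequality gives $\overline{F}_{r_\theta}(p)\ge\min_{v\in\mathcal{V}}\overline{F}_{r_\theta}(v)$ for $p\in\mathrm{conv}(\mathcal{V})$, while for each $p\in\Omega_1$ and each $j$ the function $t\mapsto\overline{F}_{r_\theta}(p+te_j)$ is concave and bounded below on $[0,+\infty)$, hence nondecreasing, so appending recession directions never decreases the value. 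Thus $\inf_{\Omega_1}\overline{F}_{r_\theta}=\min_{v\in\mathcal{V}}\overline{F}_{r_\theta}(v)$, attained at some vertex of $\Omega_1$; its $(x,y)$-projection belongs to $\overline{\mathcal{P}}_\theta$, which is therefore nonempty for every $\theta>0$.

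For the third assertion I would set $\mathcal{S}:=\{(x,y):\exists z,\ (x,y,z)\in\mathcal{V}\}$, the projection of the finite set $\mathcal{V}$ onto the $(x,y)$-space, so $\mathcal{S}$ is finite. By Proposition~\ref{prop:eqv}, a vertex of $\Omega_1$ that is a global solution of \eqref{prob:app-z} projects to a global solution of \eqref{prob:app}, hence $\overline{\mathcal{P}}_\theta\subset\mathcal{P}_\theta\cap\mathcal{S}$; by the previous step this set is nonempty for all $\theta>0$. Theorem~\ref{thm:glo}~iii) applied with this $\mathcal{S}$ then provides $\theta_0\ge 0$ with $\mathcal{P}_\theta\cap\mathcal{S}\subset\mathcal{P}$ for all $\theta\ge\theta_0$; replacing $\theta_0$ by $\max\{\theta_0,1\}$ makes it positive, and then $\overline{\mathcal{P}}_\theta\subset\mathcal{P}_\theta\cap\mathcal{S}\subset\mathcal{P}$ for all $\theta\ge\theta_0$.

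I expect the only real obstacle to be the second step, namely guaranteeing that the infimum of the concave objective over the possibly unbounded polyhedron $\Omega_1$ is actually attained and, moreover, attained at a vertex; this is exactly where the hypotheses that $f$ is concave and bounded below are consumed, through the recession-direction argument. Everything else is routine: rewriting the absolute-value constraints, the line-free/vertex dictionary for polyhedra, and the bookkeeping linking $\overline{\mathcal{P}}_\theta$, $\mathcal{P}_\theta\cap\mathcal{S}$ and Theorem~\ref{thm:glo}~iii) via Proposition~\ref{prop:eqv}.
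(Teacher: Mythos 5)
Your proof is correct and follows essentially the same route as the paper: establish that $\overline{F}_{r_\theta}$ is concave and bounded below on the pointed polyhedron $\Omega_1$ so that its minimum is attained at a vertex (hence $\overline{\mathcal{P}}_\theta\neq\emptyset$), then pass through Proposition~\ref{prop:eqv} to get $\overline{\mathcal{P}}_\theta\subset\mathcal{P}_\theta\cap\mathcal{S}$ with $\mathcal{S}$ the finite projection of $\mathcal{V}$, and conclude by Theorem~\ref{thm:glo}~iii). The only difference is that you spell out the details the paper leaves implicit (polyhedrality and line-freeness of $\Omega_1$, and the recession-direction argument for attainment of the concave minimum at a vertex), all of which are correctly handled.
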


\begin{proof}
By the assumptions, we have $\overline{F}_{r_\theta}$ is  concave, bounded below on $\Omega_1$, so $\overline{\mathcal{P}}_\theta \ne \emptyset ~\forall \theta > 0$. 
Let $\mathcal{S} = \{(x,y): (x,y,z) \in \mathcal{V} \text{ for some } z \in \mathbb{R}^n\}$. By Proposition \ref{prop:eqv}, we have $\overline{\mathcal{P}}_\theta \subset \mathcal{P}_\theta \cap \mathcal{S} ~\forall \theta > 0$. Since $\mathcal{V}$ is finite, so is $\mathcal{S}$. The property  iii) of Theorem \ref{thm:glo} implies the existence of $\theta_0 > 0$ such that
\[\overline{\mathcal{P}}_\theta \subset \mathcal{P}_\theta \cap \mathcal{S} \subset \mathcal{P}~ \forall \theta \ge \theta_0.
\]
\end{proof}

Note that the consistency between the solution of the approximate problem
and the original problem have been carried out in \cite{Bradley98b} (resp. 
\cite{Rinaldi10}) for the case where $f$ is concave, bounded below on the
polyhedral convex set $K$ and $r$ is the exponential approximation defined
in Table \ref{tab:app-form} below (resp. $r$ is the logarithm function
and/or $\ell _{p}$-norm ($p<1$)). Here, besides general results carried out
in Theorem \ref{thm:glo}, our Corollary \ref{crl:1} gives a much stronger
result than those in \cite{Bradley98b,Rinaldi10} where they only ensure that 
$\overline{\mathcal{P}}_{\theta }\cap \mathcal{P}\neq \emptyset ~\forall
\theta \geq \theta _{0}$.

Observing that the approximate problem is still nonconvex for which, in
general, only local algorithms are available, we are motivated by the study
of the consistency between local minimizers of the original and approximate
problems. For this purpose, first, we need to describe characteristics of
local solutions of these problems.

\begin{proposition}
i) A point $(x^{\ast },y^{\ast })\in K$ is a local optimum of the problem %
\eqref{prob:l0} if and only if $(x^{\ast },y^{\ast })$ is a local optimum of
the problem 
\begin{equation}
\min \{f(x,y):(x,y)\in K(x^{\ast })\},  \label{prob:cut}
\end{equation}%
where $K(x^{\ast })=\{(x,y)\in K:x_{i}=0~\forall i\notin supp(x^{\ast })\}$.

ii) If $(x^{\ast },y^{\ast })\in K$ is a local optimum of the problem %
\eqref{prob:l0} then 
\begin{equation}
\langle \overline{x}^{\ast },x-x^{\ast }\rangle +\langle \overline{y}^{\ast
},y-y^{\ast }\rangle \geq 0\quad \forall (x,y)\in K(x^{\ast }),
\label{cond:org}
\end{equation}%
for some $(\overline{x}^{\ast },\overline{y}^{\ast })\in \partial f(x^{\ast
},y^{\ast })$.
\end{proposition}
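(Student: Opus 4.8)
The plan is to prove part (i) first, since part (ii) will follow from it by writing down the first-order optimality condition for the convex-constrained DC minimization problem \eqref{prob:cut}. For part (i), the key observation is that the zero-norm term $\lambda\|x\|_0$ is \emph{locally constant} at any point where the support does not shrink. More precisely, fix $(x^{\ast},y^{\ast})\in K$ with $S:=\mathrm{supp}(x^{\ast})$. For $(x,y)$ in a small enough ball around $(x^{\ast},y^{\ast})$, every coordinate $x_i$ with $i\in S$ stays nonzero (by continuity, since $x^{\ast}_i\ne 0$), so $\|x\|_0\ge |S|=\|x^{\ast}\|_0$, with equality exactly when $x_i=0$ for all $i\notin S$, i.e. when $(x,y)\in K(x^{\ast})$. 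I would split the argument accordingly.

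\textbf{($\Leftarrow$)} Suppose $(x^{\ast},y^{\ast})$ is a local optimum of \eqref{prob:cut}, so there is $\delta_1>0$ with $f(x,y)\ge f(x^{\ast},y^{\ast})$ for all $(x,y)\in K(x^{\ast})\cap B((x^{\ast},y^{\ast}),\delta_1)$. Choose $\delta\le\delta_1$ small enough that $\|x-x^{\ast}\|<\delta$ forces $x_i\ne 0$ for all $i\in S$ (possible since $S$ is finite and each $|x^{\ast}_i|>0$; take $\delta<\min_{i\in S}|x^{\ast}_i|$). Now take any $(x,y)\in K\cap B((x^{\ast},y^{\ast}),\delta)$. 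If some $x_i\ne 0$ with $i\notin S$, then $\|x\|_0\ge |S|+1=\|x^{\ast}\|_0+1$, so $F(x,y)\ge f(x,y)+\lambda(\|x^{\ast}\|_0+1)\ge F(x^{\ast},y^{\ast})$ provided $\delta$ is also small enough that $|f(x,y)-f(x^{\ast},y^{\ast})|<\lambda$ on this ball (using continuity of $f$). Otherwise $(x,y)\in K(x^{\ast})$, and then $F(x,y)=f(x,y)+\lambda\|x^{\ast}\|_0\ge f(x^{\ast},y^{\ast})+\lambda\|x^{\ast}\|_0=F(x^{\ast},y^{\ast})$ by local optimality in \eqref{prob:cut}. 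Hence $(x^{\ast},y^{\ast})$ is a local optimum of \eqref{prob:l0}.

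\textbf{($\Rightarrow$)} Conversely, if $(x^{\ast},y^{\ast})$ is a local optimum of \eqref{prob:l0} with radius $\delta$, then for $(x,y)\in K(x^{\ast})\cap B((x^{\ast},y^{\ast}),\delta)$ we have $\|x\|_0\le\|x^{\ast}\|_0$ (support can only shrink on $K(x^{\ast})$), but actually we only need $\|x\|_0=\|x^{\ast}\|_0$ is \emph{not} guaranteed — so instead argue: $F(x,y)\ge F(x^{\ast},y^{\ast})$ gives $f(x,y)+\lambda\|x\|_0\ge f(x^{\ast},y^{\ast})+\lambda\|x^{\ast}\|_0$; since on $K(x^{\ast})$ one has $\|x\|_0\le\|x^{\ast}\|_0$, this yields $f(x,y)\ge f(x^{\ast},y^{\ast})+\lambda(\|x^{\ast}\|_0-\|x\|_0)\ge f(x^{\ast},y^{\ast})$, so $(x^{\ast},y^{\ast})$ is a local optimum of \eqref{prob:cut}.

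For part (ii), I would invoke part (i): $(x^{\ast},y^{\ast})$ minimizes the DC function $f=g-h$ over the polyhedral convex set $K(x^{\ast})$ locally, hence it is in particular a critical point, and the standard necessary optimality condition for minimizing a DC (indeed any) function over a convex set gives the existence of $(\overline{x}^{\ast},\overline{y}^{\ast})\in\partial f(x^{\ast},y^{\ast})$ (in the sense $\partial g-\partial h$ as defined in the paper) with $\langle(\overline{x}^{\ast},\overline{y}^{\ast}),(x,y)-(x^{\ast},y^{\ast})\rangle\ge 0$ for all $(x,y)\in K(x^{\ast})$, which is exactly \eqref{cond:org}. The main obstacle — really the only delicate point — is the bookkeeping on the radius $\delta$ in the ($\Leftarrow$) direction of (i): one must simultaneously ensure support preservation on $S$ and that the continuous term $f$ cannot drop by as much as $\lambda$ over the ball, which is where local optimality of \eqref{prob:cut} and continuity of $f$ are both used. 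Everything else is routine.
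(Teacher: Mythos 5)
Your proof is correct and takes essentially the same route as the paper: for (i) you shrink the neighbourhood so that the support of $x^{\ast}$ is preserved and $|f(x,y)-f(x^{\ast},y^{\ast})|<\lambda$, then split into the cases $(x,y)\in K(x^{\ast})$ and $(x,y)\notin K(x^{\ast})$ exactly as the authors do, and for (ii) you apply the standard DC necessary optimality condition to $\min\{f(x,y):(x,y)\in K(x^{\ast})\}$, which is the paper's argument via $0\in\partial(g+\chi_{K(x^{\ast})})(x^{\ast},y^{\ast})-\partial h(x^{\ast},y^{\ast})$. The only cosmetic slip is calling $K(x^{\ast})$ polyhedral, whereas the proposition only assumes $K$ convex; nothing in your argument uses polyhedrality, so this does not affect correctness.
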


\begin{proof}
i) The forward implication is obvious, we only need to prove the backward one. Assume that $(x^*,y^*)$ is a local solution of the problem \eqref{prob:cut}. 
There exists a neighbourhood $\mathcal{V}$ of $(x^*,y^*)$ such that 
\[
supp(x^*) \subset supp(x) \quad \text{and} \quad |f(x,y) - f(x^*,y^*)| < \lambda \quad \forall (x,y) \in \mathcal{V},
\]
and 
\[
f(x^*,y^*) \le f(x,y) \quad \forall (x,y) \in \mathcal{V}\cap K(x^*).
\]
For any $(x,y) \in \mathcal{V}\cap K$,   two cases occur:

- If $(x,y) \in K(x^*)$, then $\|x\|_0 = \|x^*\|_0$ and $f(x^*,y^*) \le f(x,y)$.

- If $(x,y) \notin K(x^*)$, then $\|x^*\|_0 \le \|x\|_0 - 1$ and $f(x^*,y^*) < f(x,y) + \lambda$.\\
In   both cases, we have $f(x^*,y^*) + \lambda \|x^*\|_0 \le f(x,y) + \lambda \|x\|_0$. Thus, $(x^*,y^*)$ is a local solution of the problem \eqref{prob:l0}.\\
ii) Since $f = g-h$ is a DC function, \eqref{prob:cut} is a DC program. Therefore, the  necessary local condition of the problem \eqref{prob:cut} can be stated by
\[
0 \in \partial (g + \chi_{K(x^*)})(x^*,y^*) - \partial h(x^*,y^*),
\]
or equivalently, there exists $(\overline{x}^*,\overline{y}^*) \in \partial f(x^*,y^*)$ such that
\[
- (\overline{x}^*,\overline{y}^*) \in \partial \chi_{K(x^*)}(x^*,y^*)
\Leftrightarrow  \langle \overline{x}^*,x - x^* \rangle + \langle \overline{y}^*,y - y^* \rangle \ge 0 \quad \forall (x,y) \in K(x^*),
\]
\end{proof}

As for the characteristics of local solutions of the problem (\ref{prob:app}

), we follow the condition (\ref{critical point}) above for a DC program.
Writing the problem \eqref{prob:app} in form of a DC program 
\begin{equation}
\min_{x,y}\{F_{r_{\theta }}(x,y):=G(x,y)-H(x,y)\},  \label{eqn:DC}
\end{equation}%
with 
\begin{equation}
G(x,y)=\chi _{K}(x,y)+g(x,y)+\lambda \sum_{i=1}^{n}\varphi _{\theta }(x_{i}),%
\text{ }H(x,y)=h(x,y)+\lambda \sum_{i=1}^{n}\psi _{\theta }(x_{i}).
\label{decomp0}
\end{equation}
Then, for a point $(x^{\ast },y^{\ast })\in K,$ the necessary local
optimality condition (\ref{critical point}) can be expressed as 
\begin{equation*}
0\in \partial G(x^{\ast },y^{\ast })-\partial H(x^{\ast },y^{\ast }),
\end{equation*}%
which is equivalent to 
\begin{equation}
\langle \overline{x}^{\ast },x-x^{\ast }\rangle +\langle \overline{y}^{\ast
},y-y^{\ast }\rangle +\langle \overline{z}^{\ast },x-x^{\ast }\rangle \geq
0\quad \forall (x,y)\in K,  \label{cond:app}
\end{equation}%
for some $(\overline{x}^{\ast },\overline{y}^{\ast })\in \partial f(x^{\ast
},y^{\ast })$ and $\overline{z}_{i}^{\ast }\in \lambda \partial r_{\theta
}(x_{i}^{\ast })~~\forall i=1,\dots ,n$.

Now we are able to state consistency results of local optimality.

\begin{theorem}
\label{thm:local} Let $\mathcal{L}$ and $\mathcal{L}_\theta$ be the sets of $%
(x,y) \in K$ satisfying the conditions \eqref{cond:org} and \eqref{cond:app}
respectively.

\begin{enumerate}
\item[i)] Let $\{\theta _{k}\}$ be a sequence of nonnegative numbers such
that $\theta _{k}\rightarrow +\infty $ and $\{(x^{k},y^{k})\}$ be a sequence
such that $(x^{k},y^{k})\in \mathcal{L}_{\theta _{k}},\forall k$. If $%
(x^{k},y^{k})\rightarrow (x^{\ast },y^{\ast })$, we have $(x^{\ast },y^{\ast
})\in \mathcal{L}$.

\item[ii)] If $K$ is compact then, for any $\epsilon > 0$, there is $%
\theta(\epsilon) > 0$ such that 
\begin{equation*}
\mathcal{L}_\theta \subset \mathcal{L} + B(0,\epsilon) \quad \forall \theta
\ge \theta(\epsilon).
\end{equation*}

\item[iii)] If there is a finite set $\mathcal{S}$ such that $\mathcal{L}%
_\theta \cap \mathcal{L} \ne \emptyset,\forall \theta > 0$, then there
exists $\theta_0\ge 0$ such that 
\begin{equation*}
\mathcal{L}_\theta \cap \mathcal{S} \subset \mathcal{L} \quad \forall \theta
\ge \theta_0.
\end{equation*}
\end{enumerate}
\end{theorem}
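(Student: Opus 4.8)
The plan is to mirror, almost verbatim, the three-part structure of the proof of Theorem~\ref{thm:glo}, replacing the comparison of objective values by the passage to the limit in the variational inequalities \eqref{cond:org} and \eqref{cond:app}. For part i), I would start with a sequence $(x^k,y^k) \in \mathcal{L}_{\theta_k}$ converging to $(x^*,y^*)$. By definition of $\mathcal{L}_{\theta_k}$, for each $k$ there are $(\overline{x}^k,\overline{y}^k) \in \partial f(x^k,y^k)$ and $\overline{z}^k_i \in \lambda \partial r_{\theta_k}(x^k_i)$ such that
\begin{equation*}
\langle \overline{x}^k + \overline{z}^k, x - x^k \rangle + \langle \overline{y}^k, y - y^k \rangle \ge 0 \quad \forall (x,y) \in K.
\end{equation*}
The first step is to bound these sequences so as to extract convergent subsequences: $(\overline{x}^k,\overline{y}^k)$ stays in a bounded set because $f = g - h$ is a finite convex-minus-convex function, hence locally Lipschitz, so its subdifferential is locally bounded near the convergent sequence $(x^k,y^k)$; passing to a subsequence, $(\overline{x}^k,\overline{y}^k) \to (\overline{x}^*,\overline{y}^*)$, and by closedness of the subdifferential graph of a DC function (closedness of $\partial g$ and $\partial h$) we get $(\overline{x}^*,\overline{y}^*) \in \partial f(x^*,y^*)$.

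The second and more delicate step handles $\overline{z}^k$. I would split the index set $\{1,\dots,n\}$ according to whether $x^*_i = 0$ or $x^*_i \ne 0$. If $x^*_i \ne 0$, then for large $k$ one has $x^k_i \in [a_i,b_i]$ with $0 \notin [a_i,b_i]$, and Assumption~\ref{assump} v) forces $\overline{z}^k_i \to 0$; so the corresponding terms disappear in the limit, exactly matching the fact that in \eqref{cond:org} the test vectors $(x,y) \in K(x^*)$ are unconstrained in the coordinates $i \in \mathrm{supp}(x^*)$ (there the inequality need not "see" those components). If $x^*_i = 0$, then for $(x,y) \in K(x^*)$ we have $x_i = 0 = x^*_i$, so the term $\overline{z}^k_i(x_i - x^k_i) = \overline{z}^k_i(0 - x^k_i) = -\overline{z}^k_i x^k_i$; by Assumption~\ref{assump} iv), $t\mu \ge 0$ for $\mu \in \partial r_\theta(t)$, hence $\overline{z}^k_i x^k_i \ge 0$, so $-\overline{z}^k_i x^k_i \le 0$ and dropping it only decreases the left-hand side. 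Thus, restricting the inequality to $(x,y) \in K(x^*) \subset K$ and taking $\liminf$ (using continuity of the inner products and $x^k \to x^*$, $y^k \to y^*$), the surviving terms give precisely
\begin{equation*}
\langle \overline{x}^*, x - x^* \rangle + \langle \overline{y}^*, y - y^* \rangle \ge 0 \quad \forall (x,y) \in K(x^*),
\end{equation*}
which is \eqref{cond:org}; hence $(x^*,y^*) \in \mathcal{L}$. The one subtlety I would be careful about is that $K(x^*)$ may be strictly smaller than each $K(x^k)$ when $\mathrm{supp}(x^k)$ oscillates, but since we only test against $(x,y) \in K(x^*)$ and this set is contained in $K$, the inequality for $K$ applies directly and no $\liminf$ of feasible sets is needed; the coordinates in $\mathrm{supp}(x^*)$ are exactly the ones killed by Assumption v).

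Parts ii) and iii) then follow by the same compactness/contradiction arguments used in Theorem~\ref{thm:glo}: for ii), if the conclusion failed there would be $\epsilon > 0$, $\theta_k \to +\infty$, and $(x^k,y^k) \in \mathcal{L}_{\theta_k} \setminus (\mathcal{L} + B(0,\epsilon))$; compactness of $K$ gives a convergent subsequence whose limit lies in $\mathcal{L}$ by part i), yet also in the closed set $K \setminus (\mathcal{L} + B(0,\epsilon))$, a contradiction. For iii), if it failed there would be $\theta_k \to +\infty$ and $(x^k,y^k) \in (\mathcal{L}_{\theta_k} \cap \mathcal{S}) \setminus \mathcal{L}$; since $\mathcal{S}$ is finite, a subsequence is constant equal to some $(\overline{x},\overline{y}) \notin \mathcal{L}$, but part i) applied to this constant (hence convergent) subsequence gives $(\overline{x},\overline{y}) \in \mathcal{L}$, a contradiction. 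I expect the main obstacle to be part i), specifically the careful bookkeeping in splitting the coordinates and justifying that the $\overline{z}^k_i$ terms either vanish (via Assumption v)) or have a favorable sign (via Assumption iv)) on the restricted test set $K(x^*)$; the boundedness and graph-closedness of $\partial f$ are standard but should be stated explicitly since $f$ is only assumed to be a finite DC function.
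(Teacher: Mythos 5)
Your argument matches the paper's proof essentially step for step: extract convergent subgradient subsequences via local boundedness and graph-closedness of $\partial g$ and $\partial h$ (the paper does this by applying Rockafellar's Theorems 24.7 and 24.4 to the $g$- and $h$-components separately, which is how the extraction should be phrased, rather than for the difference $(\overline{x}^k,\overline{y}^k)$ alone), make the $\overline{z}^k_i$ terms on $\mathrm{supp}(x^*)$ vanish by Assumption v), discard the remaining terms by the sign condition of Assumption iv), pass to the limit over $K(x^*)$, and obtain ii) and iii) by the same compactness/finiteness contradictions as in Theorem 1. The only slip is the wording \textquotedblleft dropping it only decreases the left-hand side\textquotedblright: since $-\overline{z}^k_i x^k_i\leq 0$, discarding it can only \emph{increase} the left-hand side, which is precisely why the inequality survives --- your logic is correct, only that phrase is inverted.
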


\begin{proof}
i) By definition, there is a sequence $\{(\overline{x}^k,\overline{y}^k,\overline{z}^k)\}$ such that for all $k=1,2,\dots$
\[
(\overline{x}^k,\overline{y}^k) \in \partial f(x^k,y^k), \text{ and } \overline{z}^k_i \in \lambda \partial r_{\theta_k}(x^k_i)~~ i=1,\dots,n,
\]
\begin{equation}\label{eqn:inq_thm2}
\langle \overline{x}^k,x - x^k \rangle + \langle \overline{y}^k,y - y^k \rangle + \langle \overline{z}^k,x - x^k \rangle \ge 0 \quad \forall (x,y) \in K.
\end{equation}
For $k=1,2,\dots$, we have
\[
(\overline{x}^k,\overline{y}^k) = (x^k_g,y^k_g) - (x^k_h,y^k_h),
\]
where $(x^k_g,y^k_g) \in \partial g(x^k,y^k)$, and $(x^k_h,y^k_h) \in \partial h(x^k,y^k)$. 

Since $\{(x^k,y^k)\}$ converges to $(x^*,y^*)$, there is $k_0 \in \mathbb{N}$ and a compact set $\mathcal{S} \subset \mathbb{R}^n\times \mathbb{R}^m$ such that $(x^k,y^k) \in \mathcal{S}, ~\forall k\ge k_0$. It  follows by Theorem 24.7 (\cite{Rockafellar}) that $\partial g(\mathcal{S}):=\cup_{x \in\mathcal S}\partial g(x)$ and $\partial h(\mathcal{S}):=\cup_{x \in\mathcal S}\partial h(x)$ are compact sets. Thus, there is an infinite set $\mathcal{K}\subset\mathbb{N}$ such that the sequence $\{(x^k_g,y^k_g)\}_{k \in \mathcal{K}}$ converges to a point $(x^*_g,y^*_g) \in \partial g(\mathcal S)$ and the sequence $\{(x^k_h,y^k_h)\}_{k \in \mathcal{K}}$ converges to a point $(x^*_h,y^*_h) \in \partial h(\mathcal S)$. By Theorem 24.4 (\cite{Rockafellar}), we have $(x^*_g,y^*_g) \in \partial g(x^*,y^*)$ and $(x^*_h,y^*_h) \in \partial h(x^*,y^*)$. Therefore, the sequence $\{(\overline{x}^k,\overline{y}^k)\}_{k \in \mathcal{K}}$ converges to $(\overline{x}^*,\overline{y}^*) = (x^*_g,y^*_g) - (x^*_h,y^*_h) \in \partial f(x^*,y^*)$.

By Assumption \ref{assump} iv), we have $\overline{z}^k_i x^k_i \ge 0 ~\forall i,k$. Moreover, for any $i \in supp(x^*)$, there exist $a_i \le b_i$ and $k_i \in \mathbb{N}$ such that $0 \notin [a_i, b_i]$ and $x^k_i \in [a_i,b_i]$ for all $k \ge k_i$. By Assumption \ref{assump} v), we deduce that $ \overline{z}^k_i \to 0$ as $k \to +\infty$. 

For arbitrary $(x,y) \in K(x^*)$, \eqref{eqn:inq_thm2} implies that
\begin{eqnarray*}
\langle \overline{x}^k,x - x^k \rangle + \langle \overline{y}^k,y - y^k \rangle & \ge & \sum_{i \notin supp(x^*)} \overline{z}^k_i x^k_i  - \sum_{i \in supp(x^*)} \overline{z}^k_i (x_i - x^k_i) \\
& \ge &  - \sum_{i \in supp(x^*)} \overline{z}^k_i (x_i - x^k_i) \quad \forall k.
\end{eqnarray*}
Taking $k\in \mathcal{K}, k \to +\infty$, we get
\[
\langle \overline{x}^*,x - x^* \rangle + \langle \overline{y}^*,y - y^* \rangle \ge 0 \quad \forall (x,y) \in K(x^*).
\]
Thus, $(x^*,y^*) \in \mathcal{L}$.

ii) and iii) are proved similarly as in Theorem \ref{thm:glo}.
\end{proof}

%++++++++++++++++++++

\section{DC approximation functions}

\label{Sect.compare}

First, let us mention, in chronological order, the approximation functions
proposed in the literature in different contexts, but we don't indicate the
related works concerning algorithms using these approximations). The first
was concave exponential approximation proposed in \cite{Bradley-Mangasarian}
in the context of feature selection in SVM, and $\ell _{p}$-norm with $0<p<1$
for sparse regression (\cite{Fu98}). Later, the $\ell _{p}$-norm with $p<0$
was studied in \cite{Rao99} for sparse signal recovery, and then the
Smoothly Clipped Absolute Deviation (SCAD) \cite{FAN01} in the context of
regression, the logarithmic approximation \cite{Weston03} for feature
selection in SVM, and the Capped-$\ell _{1}$ (\cite{Peleg08}) applied on
sparse regression.

A common property of these approximations is they are all even, concave
increasing functions on $[0,+\infty ).$ It is easy to verify that these
function satisfy the conditions in Assumption 1 and so they are particular
cases of our DC approximation $r$. More general DC approximation functions
are also investigated, e.g., PiL (\cite{LT12}) that is a (nonconcave)
piecewise linear function defined inTable \ref{tab:app-form}.

%Table \ref{tab:penalty} gives the formula of all above mentioned
%approximation functions that encourage sparsity. 
Note that, some of these approximation functions, namely logarithm (log),
SCAD and $\ell _{p}$-norm defined by 
\begin{equation}
Log:\log (|t|+\epsilon ),\,\epsilon >0,\quad \ell _{p}:\mathrm{sign}%
(p)(|t|+\epsilon )^{p},\,0\neq p\leq 1,\epsilon >0;  \label{log-lp}
\end{equation}%
\begin{equation}
SCAD:%
\begin{cases}
\gamma |t| & \text{ if $0\leq |t|\leq \gamma ,$}\quad \frac{(a+1)\gamma ^{2}%
}{2}\text{if $|t|\geq a\gamma $} \\ 
\frac{-t^{2}+2a\gamma |t|-\gamma ^{2}}{2(a-1)} & \text{ if $\gamma
<|t|<a\gamma $} \\ 
& 
\end{cases}%
,a>1,\gamma >0  \label{scad}
\end{equation}%
do not directly approximate $\ell _{0}$-norm. But they become approximations
of $\ell _{0}$-norm if we multiply them by an appropriate factor (which can
be incorporated into the parameter $\lambda $), and add an appropriate term
(such a procedure doesn't affect the original problem). The resulting
approximation forms of these functions are given in Table \ref{tab:app-form}.
We see that $r_{scad}$ is obtained by multiplying the SCAD function by $%
\frac{2}{(a+1)\gamma ^{2}}$ and setting $\theta =\frac{1}{\gamma }$.
Similarly, by taking $\theta =\frac{1}{\epsilon }$, we have 
\begin{equation*}
r_{log}(t)=\frac{\log (|t|+\epsilon )}{\log (1+1/\epsilon )}-\frac{\log
\epsilon }{\log (1+1/\epsilon )},\text{ and }r_{\ell _{p}^{-}}(t)=-\frac{%
(|t|+\epsilon )^{p}}{\epsilon ^{p}}+1.
\end{equation*}%
For using $\ell _{p}$-norm approximation with $0<p<1$, we take $\theta =%
\frac{1}{p}$. Note that %, we already have that 
$\lim_{\theta \rightarrow \infty }|t|^{1/\theta }=s(t)$. To avoid
singularity at $0$, we add a small $\epsilon >0$. In this case, we require $%
\epsilon =\epsilon (\theta )$ satisfying $\lim_{\theta \rightarrow \infty
}\epsilon (\theta )^{1/\theta }=0$ to ensure that $\lim_{\theta \rightarrow
\infty }r_{\ell _{p}^{+}}(t)=s(t)$.

\begin{table}[tbp]
\caption{$\ell_0$-approximation functions $r$ and the first DC decomposition 
$\protect\varphi$. The second DC decomposition is $\protect\psi=\protect%
\varphi-r$.}
\label{tab:app-form}\centering
%\scriptsize{
\begin{tabular}{lll}
\hline
Approximation  & Function $r$ & Function $\varphi$ \\ \hline
Exp (\cite{Bradley-Mangasarian}) & $r_{exp}(t) = 1-e^{-\theta |t|}$ & $%
\theta |t|$ \vspace{0.1cm} \\ 
$\ell_p(0<p<1)$(\cite{Fu98}) & $r_{\ell_p^+}(t) = (|t|+\epsilon)^{1/\theta}$
& $\frac{\epsilon^{1/\theta - 1}}{\theta} |t|$ \vspace{0.1cm} \\ 
$\ell_p(p<0)$(\cite{Rao99}) & $r_{\ell_p^-}(t) = 1- (1+\theta|t|)^p$, $p<0$
& $-p \theta |t|$ \vspace{0.1cm} \\ 
Log (\cite{Weston03}) & $r_{log}(t) = \frac{\log(1+\theta |t|)}{%
\log(1+\theta)}$ & $\frac{\theta}{\log(1+\theta)} |t|$ \vspace{0.1cm} \\ 
SCAD (\cite{FAN01}) & $r_{scad}(t) = 
\begin{cases}
\frac{2\theta}{a+1} |t| & 0 \le |t| \le \frac{1}{\theta} \\ 
\frac{-\theta^2t^2 + 2 a \theta |t| - 1}{a^2-1} & \frac{1}{\theta} < |t| < 
\frac{a}{\theta} \\ 
1 & |t| \ge \frac{a}{\theta}%
\end{cases}%
$ & $\dfrac{2\theta}{a+1} |t|$ \\ 
Capped-$\ell_1$ (\cite{Peleg08}) & $r_{cap}(t) = \min\{1,\theta |t|\}$ & $%
\theta |t|$ \vspace{0.1cm} \\ 
PiL \cite{LT12} & $r_{PiL} = \min\left\{1, \max\left\{0, \frac{\theta |t| - 1%
}{a-1} \right\} \right\}$ & $\frac{\theta}{a-1} \max\left\{\frac{1}{\theta},
|t| \right\}$ \\ \hline
\end{tabular}
%}
\end{table}

All these functions satisfy Assumption \ref{assump} (for proving the
condition iii) of Assumption \ref{assump} we indicate in Table \ref%
{tab:app-form} a DC decomposition of the approximation functions), so the
consistency results stated in Theorems \ref{thm:glo} and \ref{thm:local} are
applicable.

\textbf{Discussion}. Except $r_{PiL}$ that is differentiable at $0$ with $%
r_{PiL}^{\prime }(0)=0$, the other approximations have the right derivative
at $0$ depending on the approximation parameter $\theta $. Clearly the
tightness of each approximation depends on related parameters. Hence, a
suitable way to compare them is using the parameter $\theta $ such that
their right derivatives at $0$ are equal, namely 
\begin{equation*}
\theta _{cap}=\frac{2}{a+1}\theta _{scad}=\theta _{exp}=-p\theta _{\ell
_{p}^{-}}.
\end{equation*}%
In this case, by simple calculation we have 
\begin{equation}
0\leq r_{\ell _{p}^{-}}\leq r_{exp}\leq r_{scad}\leq r_{cap}\leq s.
\label{compare}
\end{equation}%
Comparing $r_{cap}$ and $r_{scad}$ with different values $\theta ,$ we get 
\begin{equation}
\begin{cases}
0\leq r_{scad}\leq r_{cap}\leq s, & \text{if }\frac{2\theta _{scad}}{a+1}%
\leq \theta _{cap} \\ 
0\leq r_{cap}\leq r_{scad}\leq s, & \text{if }\theta _{cap}\leq \frac{\theta
_{scad}}{a}.%
\end{cases}
\label{scad-cap}
\end{equation}%
Inequalities in (\ref{compare}) show that, with the parameter $\theta $ such
that their right derivatives at $0$ are equal, $r_{scad}$ and $r_{cap}$ are
closer to the step function $s$ than $r_{\ell _{p}^{-}}\ $and $r_{exp}$.

As for $r_{log}$ and $r_{\ell _{p}^{+}},$ we see that they tend to $+\infty $
when $t\rightarrow +\infty $, so they have poor approximation for $t$ large.
Whereas, the other approximations are minorants of $s$ and larger $t$ is,
closer to $s$ they are. For easier seeing, we depict these approximations in
Figure \ref{Fig.1}.

\begin{figure}[tbp]
\begin{center}
\includegraphics[scale=0.5]{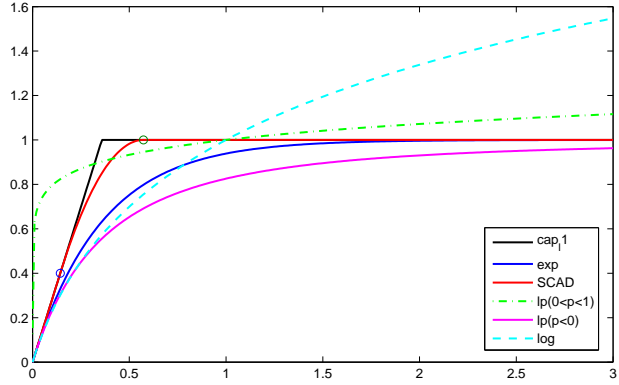}
\end{center}
\caption{Graphs of approximation functions. Except $\ell_p$-norm($0<p<1$)
and PiL, the others have the same derivative at 0. Here $\protect\theta%
_{log} = 10$ for Log, $a = 4$ for SCAD, $p=-2$ for $\ell_p$-norm($p<0$). For 
$\ell_p$-norm($0<p<1$), $\protect\epsilon = 10^{-9}$ and $p=0.2$. For PiL, $%
a = 5$ and $\protect\theta_{PiL} = a \protect\theta_{exp}$.}
\label{Fig.1}
\end{figure}

Now, we give a deeper study on Capped-$\ell _{1}$ approximation. Using exact
penalty techniques related to $\ell _{0}$-norm developed in (\cite%
{Thiao-et-al08,PLT13,LTLP13}) we prove a much stronger result for this
approximation, that is the approximation problem (\ref{prob:app}) is
equivalent to the original problem with appropriate parameters $\theta $
when $K$ is a compact polyhedral convex set (this case quite often occurs in
applications, in particular in machine learning contexts). Furthermore, when 
$K$ is a box, we show (directly, without using the exact penalty techniques)
that the Capped-$\ell _{1}$ approximation problem is equivalent to the
original problem and we compute an exact value $\theta _{0}$ such that the
equivalence holds for all $\theta >\theta _{0}$.

%++++++++++++++

\section{A deeper study on Capped-$\ell _{1}$ approximation problems}

\label{exact}

\subsection{Link between approximation and exact penalty approaches}

Thanks to exact continuous reformulation via penalty techniques, we shall
prove that, with some sparse inducing functions, the approximate problem is
equivalent to the original problem. First of all, let us recall exact
penalty techniques related to $\ell _{0}$-norm (\cite{Thiao-et-al08,PLT13}).

\subsubsection{Continuous reformulation via exact penalty techniques}

Denote by $e$ the vector of ones in the appropriate vector space. We suppose
that $K$ is bounded in the variable $x$, i.e. $K\subset \Pi
_{i=1}^{n}[a_{i},b_{i}]\times \mathbb{R}^{m}$ where $a_{i},b_{i}\in $ $%
\mathbb{R}$ such that $a_{i}\leq 0<b_{i}$ for $i=1,...,n.$ Let $c_{i}:=\max
\{\left\vert x_{i}\right\vert :x_{i}\in \lbrack a_{i},b_{i}]\}=\max
\{\left\vert a_{i}\right\vert ,\left\vert b_{i}\right\vert \}$ for $%
i=1,...,n.$ Define the binary variable $u_{i}\in \left\{ 0,1\right\} $ as%
\begin{equation}
u_{i}=\left\vert x_{i}\right\vert _{0}=%
\begin{cases}
1\text{ if }x_{i}\neq 0 \\ 
0\text{ if }x_{i}=0,%
\end{cases}%
\qquad \forall i=1...n.
\end{equation}

Then (\ref{mainpbl0}) can be reformulated as%
\begin{equation}
\alpha :=\inf \{f(x,y)+\lambda e^{T}u:(x,y)\in K,u\in \{0,1\}^{n},\left\vert
x_{i}\right\vert \leq c_{i}u_{i},\text{ }i=1,...,n\},  \label{l0 obj penal}
\end{equation}

Let $p(u)$ be the penalty function defined by%
\begin{equation}
p(u):=\sum\limits_{i=1}^{n}\min \{u_{i},1-u_{i}\}\text{.}  \label{functp}
\end{equation}%
Then (\ref{mainpbl0}) can be rewritten as

\begin{equation}
\alpha =\inf \{f(x,y)+\lambda e^{T}u:(x,y)\in K,u\in \lbrack
0,1]^{n},\left\vert x_{i}\right\vert \leq c_{i}u_{i},\text{ }i=1,...,n,\text{
}p(u)\leq 0\},  \label{P1}
\end{equation}%
which leads to the corresponding penalized problems $(\tau $ being the
positive penalty parameter)%
\begin{equation}
\alpha (\tau ):=\inf \{f(x,y)+\lambda e^{T}u+\tau p(u):(x,y)\in K,u\in
\lbrack 0,1]^{n},\left\vert x_{i}\right\vert \leq c_{i}u_{i},\text{ }%
i=1,...,n\}.  \label{pP1}
\end{equation}%
It has been shown in \cite{Thiao-et-al08,PLT13} that there is $\tau _{0}\geq
0$ such that for every $\tau >\tau _{0}$ problems (\ref{mainpbl0}) and (\ref%
{pP1}) are equivalent, in the sense that they have the same optimal value
and $(x^{\ast },y^{\ast })\in K$ is a solution of (\ref{mainpbl0}) iff there
is $u^{\ast }\in \left\{ 0,1\right\} ^{n}$ such that $(x^{\ast },y^{\ast
},u^{\ast })$ is a solution of (\ref{pP1}).

It is clear that if the function $f(x,y)$ is a DC function on $K$ then (\ref%
{P1}) is a DC program.

Let us state now the link between the continuous problem (\ref{pP1}) and the
Capped-$\ell _{1}$ approximation problem.

\subsubsection{Link between (\protect\ref{pP1}) and Capped-$\ell _{1}$
approximation problem}

The Capped-$\ell _{1}$ approximation is defined by: 
\begin{equation}
\Psi _{_{\theta }}(x):=\sum_{i=1}^{n}r_{_{cap}}(x_{i}),\forall x=(x_{i})\in 
\mathbb{R}^{n},\text{ with }r_{cap}(t):=\min \{\theta \left\vert
t\right\vert ,1\},\text{ }t\in \mathbb{R}.  \label{psi-norm}
\end{equation}%
\ We will demonstrate that the resulting approximate problem of (\ref%
{mainpbl0}), namely%
\begin{equation}
\beta (\theta ):=\inf \left\{ f(x,y)+\lambda \sum_{i=1}^{n}r_{cap}\left(
x_{i}\right) :(x,y)\in K\right\}  \label{eqn:3}
\end{equation}%
is equivalent to the penalized problem (\ref{pP1}) with suitable values of
parameters $\lambda ,$ $\tau $ and $\theta $.

Let $M = \max\{c_i:i=1,\dots,n\}$, consider the problem (\ref{pP1}) in the
form 
\begin{equation}
\alpha (\tau ):=\inf \{f(x,y)+\lambda e^{T}u+\tau p(u):(x,y)\in K,u\in
\lbrack 0,1]^{n},|x_{i}|\leq Mu_{i},~i=1,\dots ,n\}.  \label{eqn:1}
\end{equation}%
Let $\varsigma :\mathbb{R\rightarrow R}$ be the function defined by $%
\varsigma (t)=\min \{t,1-t\}.$ Then $p(u)=\sum_{i=1}^{n}\varsigma (u_{i})$
and the problem \eqref{eqn:1} can be rewritten as%
\begin{equation}
\alpha (\tau ):=\inf \left\{ f(x,y)+\lambda \sum_{i=1}^{n}\left( u_{i}+\frac{%
\tau }{\lambda }\varsigma (u_{i})\right) :(x,y)\in K,\frac{|x_{i}|}{M}\leq
u_{i}\leq 1,~i=1,\dots ,n\right\} ,
\end{equation}%
or again 
\begin{equation}
\alpha (\tau ):=\inf \left\{ f(x,y)+\lambda \sum_{i=1}^{n}\pi \left(
u_{i}\right) :(x,y)\in K,\frac{|x_{i}|}{M}\leq u_{i}\leq 1,~i=1,\dots
,n\right\}  \label{eqn:2}
\end{equation}%
where $\pi :\mathbb{R\rightarrow R}$ be the function defined by $\pi (t):=t+%
\frac{\tau }{\lambda }\varsigma (t).$

\begin{proposition}
\label{cap} Let $\theta :=\frac{\tau +\lambda }{\lambda M}.$ For all $\tau
\geq \lambda $ problems \eqref{eqn:2} and \eqref{eqn:3} are equivalent in
the following sense: $(x^{\ast },y^{\ast })$ is an optimal solution of %
\eqref{eqn:3} iff $(x^{\ast },y^{\ast },u^{\ast })$ is an optimal solution
of \eqref{eqn:2}, where $u_{i}^{\ast }\in \left\{ \frac{|x_{i}^{\ast }|}{M}%
,1\right\} \ $such that $\pi (u_{i}^{\ast })=r_{cap} (x_{i}^{\ast })$ for $%
i=1,\dots ,n.$ Moreover, $\alpha (\tau )=\beta (\theta )$.
\end{proposition}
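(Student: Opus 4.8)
The plan is to carry out the partial minimization over the auxiliary variable $u$ in \eqref{eqn:2} and recognize that it produces precisely the Capped-$\ell_1$ term appearing in \eqref{eqn:3}. For a fixed feasible $(x,y)\in K$ — note that $K\subset\prod_{i=1}^n[a_i,b_i]\times\mathbb R^m$ forces $\frac{|x_i|}{M}\in[0,1]$, so the box constraints on $u$ are consistent — the objective $f(x,y)+\lambda\sum_{i=1}^n\pi(u_i)$ together with the constraints $\frac{|x_i|}{M}\le u_i\le1$ are separable in $u$. Hence it suffices to understand, for each $i$, the scalar problem $m_i:=\min\{\pi(u):t_i\le u\le1\}$ with $t_i:=\frac{|x_i|}{M}$, and to prove that $m_i=r_{cap}(x_i)=\min\{\theta|x_i|,1\}$ with a minimizer lying in $\{t_i,1\}$.

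First I would record the shape of $\pi(t)=t+\frac{\tau}{\lambda}\min\{t,1-t\}$ on $[0,1]$: it equals $(1+\frac{\tau}{\lambda})t$ on $[0,\tfrac12]$, hence is strictly increasing there from $\pi(0)=0$ to $\pi(\tfrac12)=\frac{\lambda+\tau}{2\lambda}$; and it equals $\frac{\tau}{\lambda}+(1-\frac{\tau}{\lambda})t$ on $[\tfrac12,1]$, which, since $\tau\ge\lambda$, is non-increasing from $\pi(\tfrac12)=\frac{\lambda+\tau}{2\lambda}\ge1$ down to $\pi(1)=1$; in particular $\min_{[\frac12,1]}\pi=1$, attained at $u=1$. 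The algebraic identity that pins down the choice $\theta=\frac{\tau+\lambda}{\lambda M}$ is $(1+\frac{\tau}{\lambda})t_i=\frac{\lambda+\tau}{\lambda M}|x_i|=\theta|x_i|$, valid for every $i$ because $|x_i|=Mt_i$. Then two cases: if $t_i\le\tfrac12$, then since $[\tfrac12,1]\subseteq[t_i,1]$ we get $m_i=\min\{\pi(t_i),1\}=\min\{(1+\frac{\tau}{\lambda})t_i,1\}=\min\{\theta|x_i|,1\}=r_{cap}(x_i)$, with minimizer $t_i$ when $\theta|x_i|\le1$ and $1$ otherwise; if $t_i>\tfrac12$, then $\pi$ is non-increasing on $[t_i,1]$, so $m_i=\pi(1)=1$, while $\theta|x_i|=(1+\frac{\tau}{\lambda})t_i>\frac{\lambda+\tau}{2\lambda}\ge1$ gives $r_{cap}(x_i)=1=m_i$, with minimizer $1$. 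In every case $m_i=r_{cap}(x_i)$ and a minimizer lies in $\{t_i,1\}$.

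From the separation it then follows that
\begin{equation*}
\alpha(\tau)=\inf_{(x,y)\in K}\Big\{f(x,y)+\lambda\sum_{i=1}^n m_i\Big\}=\inf_{(x,y)\in K}\Big\{f(x,y)+\lambda\sum_{i=1}^n r_{cap}(x_i)\Big\}=\beta(\theta),
\end{equation*}
which is the equality of optimal values. For the equivalence of solutions: if $(x^*,y^*)$ is optimal for \eqref{eqn:3}, choose for each $i$ a minimizer $u_i^*\in\{\frac{|x_i^*|}{M},1\}$ of $\pi$ on $[\frac{|x_i^*|}{M},1]$ as above, so that $\pi(u_i^*)=r_{cap}(x_i^*)$; then $(x^*,y^*,u^*)$ is feasible for \eqref{eqn:2} with objective value $f(x^*,y^*)+\lambda\sum_i r_{cap}(x_i^*)=\beta(\theta)=\alpha(\tau)$, hence optimal. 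Conversely, if $(x^*,y^*,u^*)$ is optimal for \eqref{eqn:2}, then $\alpha(\tau)=f(x^*,y^*)+\lambda\sum_i\pi(u_i^*)\ge f(x^*,y^*)+\lambda\sum_i m_i=f(x^*,y^*)+\lambda\sum_i r_{cap}(x_i^*)\ge\beta(\theta)=\alpha(\tau)$, so equality holds throughout and $(x^*,y^*)$ attains $\beta(\theta)$, i.e. is optimal for \eqref{eqn:3}.

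The argument is elementary; the only delicate point — and the only place where the hypothesis $\tau\ge\lambda$ is genuinely used — is the scalar minimization of $\pi$ over the sub-interval $[t_i,1]$: one needs $\tau\ge\lambda$ both to know $\pi$ is non-increasing on $[\tfrac12,1]$ (so the constrained minimum there equals $\pi(1)=1$) and to know $\frac{\lambda+\tau}{2\lambda}\ge1$ (so $r_{cap}(x_i)=1$ whenever $t_i>\tfrac12$). I would also flag the mild abuse in the phrasing ``$u_i^*\in\{\frac{|x_i^*|}{M},1\}$ such that $\pi(u_i^*)=r_{cap}(x_i^*)$'': this asserts that one of the two points realizes the value, not that both do.
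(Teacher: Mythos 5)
Your proof is correct and follows essentially the same route as the paper's: reduce, for fixed $(x,y)$, to the separable scalar minimization of $\pi$ over $[\,|x_i|/M,1]$, show this minimum equals $r_{cap}(x_i)$ and is attained at one of the two endpoints, and then transfer optimality between \eqref{eqn:2} and \eqref{eqn:3} in both directions. The only difference is that where the paper invokes concavity of $\pi$ (inherited from $\varsigma$) to reduce the interval minimization to the endpoints, you perform an explicit piecewise-linear case analysis on $[0,\tfrac12]$ and $[\tfrac12,1]$, which in fact makes explicit the small computation (the case $|x_i|/M>\tfrac12$, where $\tau\geq\lambda$ is needed) that the paper leaves implicit in the chain of equalities.
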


\begin{proof} If $(x^{\ast },y^{\ast },u^{\ast })$ is an optimal solution of %
\eqref{eqn:2}, then $u_{i}^{\ast }$ is an optimal solution of the following
problem, for every $i=1,\dots ,n$ 
\begin{equation}
\ \min \left\{ \pi (u_{i}):\frac{|x_{i}^{\ast }|}{M}\leq u_{i}\leq 1\right\}
.  \label{minu}
\end{equation}%
Since $\varsigma$ is a concave function, so is $\pi$. Consequently

\begin{eqnarray*}
\min \left\{ \pi (u_{i}):\frac{|x_{i}^{\ast }|}{M}\leq u_{i}\leq 1\right\}
=\min \left\{ \pi \left( \frac{|x_{i}^{\ast }|}{M}\right) ,\pi (1)\right\}
 =\min \left\{ \left( 1+\frac{\tau }{\lambda }\right) 
\frac{|x_{i}^{\ast }|}{M},1\right\} =r_{cap}(x_{i}^{\ast }).
\end{eqnarray*}%

For an arbitrary $(x,y)\in K$, we will show that%
\begin{equation}
f(x^{\ast },y^{\ast })+\lambda \sum_{i=1}^{n} r_{cap}(x_{i}^{\ast
})\leq f(x,y)+\lambda \sum_{i=1}^{n}r_{cap}(x_{i}).  \label{ieqpsi}
\end{equation}%
By the assumption that $(x^{\ast },y^{\ast },u^{\ast })$ is an optimal
solution of \eqref{eqn:2}, we have 
\begin{equation}
f(x^{\ast },y^{\ast })+\lambda \sum_{i=1}^{n}\pi (u_{i}^{\ast })\leq
f(x,y)+\lambda \sum_{i=1}^{n}\pi (u_{i})  \label{ieqpi}
\end{equation}%
for any feasible solution $(x,y,u)$ of \eqref{eqn:2}. Let $\ $ 
\begin{equation*}
u_{i}^{x}\in \arg \min \left\{ \pi (\xi )\ :\xi \in \left\{ \frac{|x_{i}|}{M}%
,1\right\} \right\} \subset \arg \min \left\{ \pi (\xi ):\frac{|x_{i}|}{M}%
\leq \xi \leq 1\right\} ,
\end{equation*}%
for all $i=1,\dots ,n$. Then $(x,y,u^{x})$ is a feasible solution of %
\eqref{eqn:1} and 
\begin{equation*}
\pi (u_{i}^{x})=\min \left\{ \pi (\xi ):\frac{|x_{i}|}{M}\leq \xi \leq
1\right\} \ =r_{cap}(x_{i}),\quad \forall i=1,\dots ,n.
\end{equation*}%
Combining (\ref{ieqpi}) in which $u_{i}$ is replaced by $u_{i}^{x}$ and the
last equation we get (\ref{ieqpsi}), which implies that $(x^{\ast },y^{\ast
})$ is an optimal solution of \eqref{eqn:3}.%\newline

Conversely, if $(x^{\ast },y^{\ast })$ is a solution of \eqref{eqn:3}, and
let $u_{i}^{\ast }\in \left\{ \frac{|x_{i}^{\ast }|}{M},1\right\} \ $such
that $\pi (u_{i}^{\ast })=r_{cap}(x_{i}^{\ast })$ for $i=1,\dots ,n.$ Then $(x^{\ast },y^{\ast
},u^{\ast })$ is a feasible solution of \eqref{eqn:2} and for an arbitrary
feasible solution $(x,y,u)$ of \eqref{eqn:2}, we have 
\begin{eqnarray*}
f(x,y)+\lambda \sum_{i=1}^{n}\pi(u_i) \geq f(x,y)+\lambda \sum_{i=1}^{n}r_{cap}(x_{i})\\\geq
f(x^{\ast },y^{\ast })+\lambda \sum_{i=1}^{n}r_{cap}(x_{i}^{\ast
})
=f(x^{\ast },y^{\ast })+\lambda \sum_{i=1}^{n}\pi(u^*_i) .
\end{eqnarray*}
Thus, $(x^{\ast },y^{\ast },u^{\ast })$ is an optimal solution of %
\eqref{eqn:2}. The equality $\alpha (\tau )=\beta (\theta )$ is immediately
deduced from the equality $\pi (u_{i}^{\ast })=r_{cap}(x_{i}^{\ast }).$%
\end{proof}

We conclude from the above results that for $\theta =\frac{\tau +\lambda }{%
\lambda M}$ with $\tau >\max \{\lambda ,\tau _{0}\}$, or equivalently $%
\theta > \theta_0 := \max\{\frac{2}{M},\frac{\tau_0+\lambda}{\lambda M}\}$,
the approximate problem \eqref{eqn:3} is equivalent to the origial problem (%
\ref{mainpbl0}). The result justifies the goodness of the Capped-$\ell _{1}$
approximation studied in Section \ref{Sect.compare} above. 

\subsection{A special case: link between the original problem 
\eqref{mainpbl0} and Capped-$\ell _{1}$ approximation problem}

In particular, for a special structure of $K$, we get the following result.

\begin{proposition}
\label{prop:equiv-spe} Suppose that $K = \prod_{i=1}^{n} [-l_i,l_i]\times Y~
(0\le l_i \le +\infty~ \forall i, Y \subset \mathbb{R}^m)$ and $\kappa>0$ is
a constant satisfying 
\begin{equation}  \label{def:L}
|f(x,y)-f(x^{\prime },y)| \le \kappa \|x-x^{\prime }\|_2 \quad \forall
(x,y),(x^{\prime },y) \in K, \|x-x^{\prime }\|_0\le 1.
\end{equation}
Then for $\theta > \frac{\kappa}{\lambda}$, the problems \eqref{mainpbl0}
and \eqref{eqn:3} are equivalent.
\end{proposition}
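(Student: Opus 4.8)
The plan is to compare \eqref{mainpbl0} and \eqref{eqn:3} by means of a coordinatewise rounding map that sends a feasible point of the approximate problem to a feasible point of \eqref{mainpbl0} without increasing the objective; the box form $K=\prod_i[-l_i,l_i]\times Y$ is exactly what makes this map admissible. Throughout, let $\alpha$ and $\beta(\theta)$ denote the optimal values of \eqref{mainpbl0} and \eqref{eqn:3}, and note the elementary pointwise bound $r_{cap}(t)=\min\{\theta|t|,1\}\le s(t)$ for every $t\in\mathbb{R}$, with equality precisely when $t=0$ or $|t|\ge 1/\theta$. This bound immediately gives $\beta(\theta)\le\alpha$, so only the reverse inequality and the correspondence of minimizers need work.

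The heart of the proof is a rounding estimate. Fix $(x,y)\in K$, let $B=\{i:0<|x_i|<1/\theta\}$ be the set of ``fractional'' coordinates, and let $\tilde x$ be obtained from $x$ by resetting $\tilde x_i=0$ for $i\in B$. Because $0\in[-l_i,l_i]$, every vector obtained from $x$ by zeroing an arbitrary subset of $B$ still lies in $K$; walking from $x$ to $\tilde x$ one coordinate at a time and applying \eqref{def:L} at each step gives $|f(\tilde x,y)-f(x,y)|\le\kappa\sum_{i\in B}|x_i|$. Since $\|\tilde x\|_0=\|x\|_0-|B|$ and $\sum_{i=1}^n r_{cap}(x_i)=\|\tilde x\|_0+\theta\sum_{i\in B}|x_i|$ by the definition of $B$, a direct computation yields
\[
\left(f(\tilde x,y)+\lambda\|\tilde x\|_0\right)-\left(f(x,y)+\lambda\sum_{i=1}^n r_{cap}(x_i)\right)\ \le\ (\kappa-\lambda\theta)\sum_{i\in B}|x_i|\ \le\ 0,
\]
the last inequality being strict whenever $B\ne\emptyset$, since $\theta>\kappa/\lambda$. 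Thus for every $(x,y)\in K$ the approximate objective at $(x,y)$ dominates the original objective at the feasible point $(\tilde x,y)\in K$, hence is $\ge\alpha$; taking the infimum gives $\beta(\theta)\ge\alpha$, and therefore $\alpha=\beta(\theta)$.

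For the solution sets, one direction is immediate from the pointwise bound: if $(x^*,y^*)$ solves \eqref{mainpbl0} then its approximate objective is $\le f(x^*,y^*)+\lambda\|x^*\|_0=\alpha=\beta(\theta)$, so $(x^*,y^*)$ solves \eqref{eqn:3}. For the converse, let $(x^*,y^*)$ solve \eqref{eqn:3} and apply the rounding estimate: were its fractional set $B$ nonempty, the strict inequality above together with the pointwise bound would give an approximate objective value at $(\tilde x^*,y^*)$ strictly below $\beta(\theta)$, a contradiction. Hence $B=\emptyset$, so $\tilde x^*=x^*$, $\sum_i r_{cap}(x^*_i)=\|x^*\|_0$, and $f(x^*,y^*)+\lambda\|x^*\|_0=\beta(\theta)=\alpha$, i.e.\ $(x^*,y^*)$ solves \eqref{mainpbl0}.

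The two places requiring care are: (i) checking that every intermediate vector in the coordinate-by-coordinate walk remains in $K$ — this is precisely where the product/box structure of $K$ is used, and it cannot be weakened; and (ii) tracking exactly where the \emph{strict} inequality $\theta>\kappa/\lambda$ is needed, namely to force an optimal solution of the approximate problem to have no coordinate in the band $(0,1/\theta)$ and hence to coincide with its rounding. I expect (ii) to be the only subtle bookkeeping; the rest reduces to the pointwise bound and the telescoped one-coordinate Lipschitz estimate.
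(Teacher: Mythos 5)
Your proof is correct and follows essentially the same route as the paper: both arguments zero out the coordinates lying in the band $(0,1/\theta)$ (your $\tilde x$ is the paper's $t^x$), using the one-coordinate Lipschitz bound \eqref{def:L} together with $r_{cap}(x_i)=\theta|x_i|>\frac{\kappa}{\lambda}|x_i|$ on that band to show the rounded point does not increase the objective, strictly so when a fractional coordinate exists, which forces any minimizer of \eqref{eqn:3} to satisfy $|x^*_i|\ge 1/\theta$ on its support and yields the two-way correspondence. Your explicit telescoping and the value identity $\alpha=\beta(\theta)$ are only a mild reorganization of the paper's argument, not a different method.
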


\begin{proof}
We observe that if $(x,y) \in K$ such that $0< |x_{i_0}| <\frac{1}{\theta}$ for some $i_0$, let $(x',y) \in K$ determined by $x'_i=x_i ~\forall i\ne i_0$ and $x'_{i_0}=0$, then 
\begin{equation*}
f(x,y) + \lambda \Phi(x) > f(x',y) + \lambda \Phi(x'),
\end{equation*}
where $\Phi(x) = \sum_{i=1}^n r_{cap}(x_i)$. Indeed, this inequality follows the facts that 
\[
|f(x,y) - f(x',y)| \le \kappa \|x-x'\| = \kappa|x_{i_0}|
\]
and 
\[
\Phi(x) - \Phi(x') = r_{cap}(x_{i_0}) = \theta |x_{i_0}| > \frac{\kappa}{\lambda} |x_{i_0}|.
\]
For $x \in \mathbb{R}^n$, we define $t^x \in \mathbb{R}^n$ by $t^x_i = 0$ if $|x_i| < \frac{1}{\theta}$ and $t^x_i=x_i$ otherwise. By applying the above observation, for any $(x,y) \in K$, we have
\begin{equation*}
f(x,y) + \lambda \Phi(x) \ge f(t^x,y) + \lambda \Phi(t^x).
\end{equation*}
The equality holds iff $|x_i| \ge \frac{1}{\theta}~\forall i\in \mathrm{supp}(x)$.

Therefore, if $(x^*,y^*)$ is a solution of \eqref{eqn:3}, we have $|x^*_i| \ge \frac{1}{\theta} ~\forall i\in \mathrm{supp}(x^*)$. Then, for any $(x,y) \in K$,
\begin{eqnarray*}
f(x,y) + \lambda \|x\|_0 \ge f(x,y) + \lambda \Phi(x) \ge f(x^*,y^*) + \lambda \Phi(x^*) = f(x^*,y^*) + \lambda \|x^*\|_0.
\end{eqnarray*}
This means that $(x^*,y^*)$ is a solution of \eqref{mainpbl0}. 

Conversely, assume that $(x^*,y^*)$ is a solution of \eqref{mainpbl0}. Then for any $(x,y) \in K$, we have
\begin{eqnarray*}
f(x,y) + \lambda \Phi(x) &\ge & f(t^x,y) + \lambda \Phi(t^x) = f(t^x,y) + \lambda \|t^x\|_0 \\
& \ge & f(x^*,y^*) + \lambda \|x^*\|_0 \ge f(x^*,y^*) + \lambda \Phi(x^*).
\end{eqnarray*}
Thus, $(x^*,y^*)$ is a solution of \eqref{eqn:3}.
\end{proof}

For the problem of feature selection in SVM, we consider the loss function 
\begin{eqnarray*}
f(x,b) = (1-\lambda )\left(\frac{1}{N_A}\|\max\{0,-Ax+eb +e\}\|_1 +\frac{1}{%
N_B} \|\max\{0,Bx-eb +e\}\|_1 \right),
\end{eqnarray*}
(cf. Sect. \ref{num} for definition of notations).

It is easy to prove that for $u\in \mathbb{R}^{n}$, $\iota \in \mathbb{R}$
and $i\in \{1,\dots ,n\}$, we have 
\begin{equation*}
|\max \{0,\langle u,x\rangle +\iota \}-\max \{0,\langle u,x^{\prime }\rangle
+\iota \}|\leq |u|_{i}|x_{i}-x_{i}^{\prime }|,
\end{equation*}%
for all $x,x^{\prime }\in \mathbb{R}^{n}$ such that $x_{j}=x_{j}^{\prime
}~\forall j\neq i$. Therefore, for $\kappa =(1-\lambda
)\max\limits_{i=1,\dots ,n}\left\{ \frac{1}{N_{A}}\sum%
\limits_{k=1}^{N_{A}}|A_{ki}|+\frac{1}{N_{B}}\sum%
\limits_{l=1}^{N_{B}}|B_{li}|\right\} $, we have 
\begin{equation*}
|f(x,b)-f(x^{\prime },b)|\leq \kappa \Vert x-x^{\prime }\Vert ,\quad \forall
b\in \mathbb{R},\forall x,x^{\prime }\in \mathbb{R}^{n}\text{ s.t. }\Vert
x-x^{\prime }\Vert _{0}\leq 1.
\end{equation*}%
By virtue of Proposition \ref{prop:equiv-spe}, in the case of feature
selection in SVM, for $\theta >\theta ^{\ast }:=\frac{\kappa }{\lambda }$,
the problems \eqref{mainpbl0} and \eqref{eqn:3} are equivalent.

%=============================================

\subsection{Extension to other approximations}

\begin{proposition}
\label{propo_equi} i) Suppose that $\sigma $ is a function on $\mathbb{R}$
satisfying 
\begin{equation*}
r_{cap}(t)\leq \sigma (t)\leq s(t)=%
\begin{cases}
0, & \text{if }t=0, \\ 
1, & \text{otherwise},%
\end{cases}%
\end{equation*}%
for some $\theta_{cap} >\theta _{0}$. Then, the problems \eqref{mainpbl0}
and 
\begin{equation}
\inf \{f(x,y)+\lambda \sum_{i=1}^{n}\sigma (x_{i}):(x,y)\in K\}  \label{eta}
\end{equation}%
are equivalent. \newline
ii) In particular, if $\theta_{scad} > a \theta_0$ 
%$\theta _{scad}\geq a\theta _{cap}=a\frac{\tau +\lambda }{\lambda M}$ 
then for all $\tau \geq \lambda $ the approximate problem%
\begin{equation*}
\inf \{f(x,y)+\lambda \sum_{i=1}^{n}r_{scad}(x_{i}):(x,y)\in K\}
\end{equation*}
is equivalent to \eqref{mainpbl0}.
\end{proposition}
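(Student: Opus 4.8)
The plan is to derive part (i) from the chain of inequalities \eqref{compare}--\eqref{scad-cap} together with Proposition \ref{cap} (and the exact penalty equivalence between \eqref{mainpbl0} and \eqref{pP1}), and then to obtain part (ii) as the special case $\sigma = r_{scad}$. First I would fix $\theta_{cap} > \theta_0$ and observe that, by the conclusion drawn right after Proposition \ref{cap}, for such $\theta_{cap}$ the Capped-$\ell_1$ approximate problem \eqref{eqn:3} is equivalent to the original problem \eqref{mainpbl0}; in particular they share the same optimal value $\alpha$ and the same solution set (in the projected sense). Now let $(x^*,y^*)$ be a solution of \eqref{eqn:3}. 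Using $r_{cap}(t) \le \sigma(t) \le s(t)$ pointwise, for any $(x,y) \in K$ one has
\begin{equation*}
f(x,y) + \lambda \sum_{i=1}^n \sigma(x_i) \ge f(x,y) + \lambda \sum_{i=1}^n r_{cap}(x_i) \ge f(x^*,y^*) + \lambda \|x^*\|_0 = f(x^*,y^*) + \lambda \sum_{i=1}^n s(x^*_i) \ge f(x^*,y^*) + \lambda \sum_{i=1}^n \sigma(x^*_i),
\end{equation*}
where the middle inequality is the equivalence of \eqref{eqn:3} with \eqref{mainpbl0} (so $f(x^*,y^*)+\lambda\|x^*\|_0 = \alpha$ and $\alpha \le f(x,y)+\lambda r_{cap}$-sum), and the last step uses $\sigma \le s$. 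Hence $(x^*,y^*)$ solves \eqref{eta}. Conversely, if $(x^*,y^*)$ solves \eqref{eta} with optimal value $\beta_\sigma$, then for any $(x,y) \in K$, writing $t^x$ for the truncation used in the proof of Proposition \ref{prop:equiv-spe} (or simply invoking that \eqref{mainpbl0} and \eqref{eqn:3} have a common solution $(\hat x,\hat y)$ with $\sum \sigma(\hat x_i) = \|\hat x\|_0$, which holds because $r_{cap}(\hat x_i) = s(\hat x_i)$ at such a solution forces $\sigma(\hat x_i) = s(\hat x_i)$ by the sandwich), one gets
\begin{equation*}
f(x,y) + \lambda \|x\|_0 \ge f(x,y) + \lambda \sum_{i=1}^n \sigma(x_i) \ge f(x^*,y^*) + \lambda \sum_{i=1}^n \sigma(x^*_i) \ge \beta_\sigma = f(\hat x,\hat y) + \lambda \|\hat x\|_0 = \alpha,
\end{equation*}
so $(x^*,y^*)$ attains the optimal value of \eqref{mainpbl0}; one then checks that at such a point $\sum\sigma(x^*_i) = \|x^*\|_0$, i.e. $|x^*_i| \ge 1/\theta_{cap}$ on the support, again by the sandwich applied to an optimal $(\hat x, \hat y)$, giving that $(x^*,y^*)$ is genuinely a solution of \eqref{mainpbl0}. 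This proves (i).

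For (ii), I would simply verify that $\sigma = r_{scad}$ satisfies the hypothesis of (i). From \eqref{scad-cap}, when $\theta_{cap} \le \theta_{scad}/a$ we have $0 \le r_{cap} \le r_{scad} \le s$; choosing the comparison so that $\theta_{cap} = \theta_{scad}/a$, the condition $\theta_{scad} > a\theta_0$ is exactly $\theta_{cap} > \theta_0$. Then $r_{cap}(\cdot\,;\theta_{cap}) \le r_{scad}(\cdot\,;\theta_{scad}) \le s$ with $\theta_{cap} > \theta_0$, so part (i) applies and yields equivalence of the SCAD approximate problem with \eqref{mainpbl0}; the clause "for all $\tau \ge \lambda$" records that $\theta_0$ here is the threshold $\max\{2/M, (\tau_0+\lambda)/(\lambda M)\}$ coming from Proposition \ref{cap} and the exact penalty parameter $\tau_0$.

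The main obstacle I anticipate is not the inequality-chasing but making the notion of "equivalent" precise and self-consistent across the two directions: the earlier results phrase equivalence as "same optimal value plus a bijection-type correspondence of solutions," and one must be careful that the sandwich $r_{cap} \le \sigma \le s$ forces $\sum\sigma(x_i) = \|x\|_0$ exactly at the relevant optimal points (not everywhere), which is what lets one conclude a solution of \eqref{eta} is a solution of \eqref{mainpbl0} and vice versa rather than merely a point with the right objective value. This is handled by noting that at a common optimizer $(\hat x,\hat y)$ of \eqref{mainpbl0} and \eqref{eqn:3} one has $r_{cap}(\hat x_i) = s(\hat x_i)$ for every $i$ (that is precisely the content of the Capped-$\ell_1$ equivalence, $|\hat x_i| \ge 1/\theta_{cap}$ on the support), whence $\sigma(\hat x_i) = s(\hat x_i)$ too, so $(\hat x,\hat y)$ is feasible-optimal for \eqref{eta} with value $\alpha$; everything else then follows from the three-term inequality chains above.
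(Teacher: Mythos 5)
Your overall route is the paper's route: sandwich $r_{cap}\leq\sigma\leq s$, invoke the already-established equivalence of \eqref{mainpbl0} with the Capped-$\ell_{1}$ problem \eqref{eqn:3} for $\theta_{cap}>\theta_{0}$ (Proposition \ref{cap} plus the exact penalty result), and for ii) use \eqref{scad-cap} with $\theta_{cap}=\theta_{scad}/a>\theta_{0}$; the paper's own proof is in fact terser than yours (it only records that the three objective values are nested and coincide at a common solution of \eqref{mainpbl0} and \eqref{eqn:3}, calling the rest trivial), so your forward direction and your part ii) match it exactly and are correct.

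Two points in your converse direction need repair, though both are fixable with material you already have. First, the parenthetical appeal to the truncation $t^{x}$ from Proposition \ref{prop:equiv-spe} is not available here: that device requires $K$ to be a box of the form $\prod_i[-l_i,l_i]\times Y$ together with the Lipschitz-type condition \eqref{def:L}, whereas in Proposition \ref{propo_equi} $K$ is general (only bounded in $x$), so $(t^{x},y)$ need not lie in $K$; fortunately your alternative argument via the common solution $(\hat x,\hat y)$ suffices to get $\beta_{\sigma}=\alpha$. Second, your final step, ``one then checks that $\sum_i\sigma(x^{*}_i)=\Vert x^{*}\Vert_{0}$ ... again by the sandwich applied to an optimal $(\hat x,\hat y)$,'' is a non sequitur: the sandwich evaluated at $(\hat x,\hat y)$ says nothing about the components of $x^{*}$, and $\sum_i\sigma(x^{*}_i)=\Vert x^{*}\Vert_{0}$ is not the same as $|x^{*}_i|\geq 1/\theta_{cap}$ on the support. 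The clean way to finish is already implicit in your chain: since $\sigma\geq r_{cap}$, a minimizer $(x^{*},y^{*})$ of \eqref{eta} satisfies $f(x^{*},y^{*})+\lambda\sum_i r_{cap}(x^{*}_i)\leq\beta_{\sigma}=\alpha=\beta(\theta_{cap})$, so $(x^{*},y^{*})$ is a minimizer of \eqref{eqn:3}; the equivalence of \eqref{eqn:3} with \eqref{mainpbl0} for $\theta_{cap}>\theta_{0}$ then yields directly that $(x^{*},y^{*})$ solves \eqref{mainpbl0} (and, a posteriori, that $\sum_i\sigma(x^{*}_i)=\Vert x^{*}\Vert_{0}$). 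With that substitution your proof is complete and coincides in substance with the paper's.
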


\begin{proof}
As discussed before, since $\theta_{cap} > \theta_0$, the problems \eqref{mainpbl0} and \eqref{eqn:3} are equivalent. Moreover, if $(x^*,y^*)$ is a common solution then
\[
f(x^*,y^*) + \lambda \sum_{i=1}^n r_{cap}(x^*_i) = f(x^*,y^*) + \lambda \|x^*\|_0.
\]
Then i) is trivial by the fact that
\[
f(x,y) + \lambda \sum_{i=1}^n r_{cap}(x_i) \le f(x,y) + \lambda \sum_{i=1}^n \sigma(x_i) \le f(x,y) + \lambda \|x\|_0, \quad \forall (x,y).
\]
ii) is  a direct consequence of i) and Propositions \ref{cap} and     (\ref{scad-cap}).
\end{proof}

%+++++++++++

\section{DCA for solving the problem \eqref{prob:app}}

\label{DCA-prox}

In this section, we will omit the parameter $\theta $ when this doesn't
cause any ambiguity.

Usual sparsity-inducing functions are concave, increasing on $[0,+\infty )$.
Therefore, first we present three variants of DCA for solving the problem %
\eqref{prob:app} when $r$ is concave on $[0,+\infty )$. We also suppose that 
$r$ has the right derivative at $0$, denoted by $r^{\prime }(0)$, so $%
\partial (-r)(0)=\{-r^{\prime }(0)\}$.

First, we consider the approximate problem (\ref{prob:app}).

\subsection{The first DCA scheme for solving the problem \eqref{prob:app}}

We propose the following DC decomposition of $r$: 
\begin{equation}
r(t)=\eta |t|-(\eta |t|-r(t))\quad \forall t\in \mathbb{R},  \label{decompr}
\end{equation}%
where $\eta $ is a positive number such that $\psi (t)=\eta |t|-r(t)$ is
convex. The next result gives a sufficient condition for the existence of
such a $\eta $.

\begin{proposition}
\label{prop:rDC} Suppose that $r$ is a concave function on $[0,+\infty )$
and the (right) derivative at 0, $r^{\prime }(0)$, is well-defined. Let $%
\eta \geq r^{\prime }(0)$. Then $\psi (t)=\eta |t|-r(|t|)$ is a convex
function on $\mathbb{R}$.
\end{proposition}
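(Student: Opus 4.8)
The plan is to reduce the statement to a one-variable fact about $\phi(s):=\eta s-r(s)$ on $[0,+\infty)$, namely that $\phi$ is convex \emph{and} nondecreasing there, and then to lift this to all of $\mathbb{R}$ using the evenness relation $\psi(t)=\phi(|t|)$.

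First I would check that $\phi$ is convex on $[0,+\infty)$: the map $s\mapsto\eta s$ is affine and, since $r$ is concave on $[0,+\infty)$, the map $s\mapsto-r(s)$ is convex there. Hence the right derivative $\phi'_+$ exists and is nondecreasing on $[0,+\infty)$, with $\phi'_+(0)=\eta-r'(0)\ge 0$ by the hypothesis $\eta\ge r'(0)$. It follows that $\phi'_+(s)\ge\phi'_+(0)\ge 0$ for all $s\ge 0$, and then the subgradient inequality $\phi(s_2)\ge\phi(s_1)+\phi'_+(s_1)(s_2-s_1)$ for $0\le s_1\le s_2$ gives $\phi(s_2)\ge\phi(s_1)$; thus $\phi$ is nondecreasing on $[0,+\infty)$.

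Next I would deduce convexity of $\psi$ on $\mathbb{R}$ directly from these two properties. For $t_1,t_2\in\mathbb{R}$ and $\mu\in[0,1]$, the triangle inequality gives $|\mu t_1+(1-\mu)t_2|\le\mu|t_1|+(1-\mu)|t_2|$, with both sides in $[0,+\infty)$; applying successively the monotonicity of $\phi$, its convexity on $[0,+\infty)$ evaluated at the points $|t_1|,|t_2|$, and finally $\psi=\phi\circ|\cdot|$ together with evenness, one obtains
\[
\psi(\mu t_1+(1-\mu)t_2)=\phi(|\mu t_1+(1-\mu)t_2|)\le\phi(\mu|t_1|+(1-\mu)|t_2|)\le\mu\phi(|t_1|)+(1-\mu)\phi(|t_2|)=\mu\psi(t_1)+(1-\mu)\psi(t_2),
\]
which is precisely the convexity of $\psi$. (Equivalently, one could quote the composition rule ``a nondecreasing convex function composed with a convex function is convex'', applied to $\phi$, extended affinely to $(-\infty,0)$, and to $t\mapsto|t|$; the three-line estimate above avoids invoking it.)

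The one genuinely delicate point — and the only place where the hypothesis $\eta\ge r'(0)$ is used — is the monotonicity of $\phi$ on $[0,+\infty)$: if $\phi$ were decreasing near $0$, then $\psi$ would have a strict local maximum at the origin, convexity would fail, and indeed the first inequality in the display above would break. Everything else is a routine application of standard one-variable convex-analysis facts (existence and monotonicity of one-sided derivatives, the subgradient inequality), so I do not anticipate any further obstacle.
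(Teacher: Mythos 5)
Your proof is correct, and it takes a genuinely different route from the paper's. The paper first notes that $\eta|t|-r(t)$ is convex on each open half-line and then verifies the convexity inequality by hand for two points of opposite signs: after a WLOG reduction ($\alpha|t_1|\ge\beta|t_2|$) it rewrites the inequality, introduces a subgradient $\mu$ of $r$ at the intermediate point $t_0=\alpha|t_1|-\beta|t_2|$, and uses concavity of $r$ twice to show $\mu\le r'(0)\le\eta$, which closes the estimate. You instead prove that $\phi(s)=\eta s-r(s)$ is convex \emph{and nondecreasing} on $[0,+\infty)$ — monotonicity following from $\phi'_+(0)=\eta-r'(0)\ge 0$ together with the monotonicity of right derivatives of a convex function — and then conclude by the standard composition argument with the convex map $t\mapsto|t|$, written out in one chain of inequalities. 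Both arguments consume the hypothesis $\eta\ge r'(0)$ at the same conceptual spot (controlling the behaviour of $\psi$ across the kink at $0$), but yours is more modular: it avoids the case split between same-sign and opposite-sign points and the WLOG/subgradient bookkeeping of the paper, isolates exactly where the hypothesis enters, and as a by-product records the monotonicity of $\eta s-r(s)$ on $[0,+\infty)$, which is of independent use. The paper's direct verification is more self-contained in that it never needs to discuss one-sided derivatives beyond $r'(0)$ itself, only arbitrary subgradients of the concave function $r$. One small point to keep explicit in your write-up: the finiteness of $\phi'_+(0)$ (hence the validity of the subgradient inequality at $s_1=0$) is exactly what the hypothesis that $r'(0)$ is well-defined provides; with that noted, there is no gap.
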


\begin{proof}
Since $r$ is concave on 
$[0,+\infty)$, the function $\eta |t| - r(t)$  is convex on $(0,+\infty)$ and on $(-\infty,0)$. Hence it  suffices to prove that for any $t_1 > 0,t_2 < 0$ and $\alpha,\beta \in (0,1)$ such that $\alpha + \beta = 1$, we have
\begin{equation}\label{eqn:inq}
\psi\left(\alpha t_1+\beta t_2\right) \le \alpha \psi(t_1) + \beta \psi(t_2).
\end{equation}
Without loss of generality, we assume that $\alpha|t_1| \ge \beta |t_2|$. Then \eqref{eqn:inq} is equivalent to
\[
\eta (\alpha|t_1|-\beta|t_2|) - 2r\left(\alpha|t_1|-\beta|t_2|\right) \le \eta (\alpha|t_1| + \beta|t_2|) - \alpha r(|t_1|) - \beta r(|t_2|)
\]
which can be equivalently written as
\begin{equation}\label{eqn:inqb}
 \alpha r(|t_1|) + \beta r(|t_2|) - r\left(t_0\right) \le 2 \eta \beta |t_2|,
\end{equation}
where $t_0 = \alpha|t_1|-\beta|t_2| \ge 0$. Let $\mu \in \mathbb{R}$ such that $-\mu \in \partial (-r(t_0))$. Since $r$ is concave on $[0,+\infty)$, we have
\[
\alpha r(|t_1|) + \beta r(|t_2|) - r\left(t_0\right) \le  r\left(\alpha|t_1|+\beta|t_2|\right) - r\left(t_0\right) \le 2 \mu \beta |t_2|.
\]
Hence (\ref{eqn:inqb})  holds when $\mu \le \eta$. By the concavity of $r$, we have
\begin{eqnarray*}
r\left(\frac{t_0}{2}\right) \le  r(0) + r'(0) \frac{t_0}{2}, \quad \text{and} \quad r\left(\frac{t_0}{2}\right) \le  r(t_0) - \mu \frac{t_0}{2},
\end{eqnarray*}
therefore
\[(z - r'(0)) t_0 \le r(0) + r(t_0) - 2 r\left(\frac{t_0}{2}\right) \le 0.
\]
This and the condition  $ r'(0) \le \eta$ imply that $\mu \le r'(0) \le \eta$. The proof is then complete.
\end{proof}

With $\eta \geq r^{\prime }(0)$, a DC formulation of the problem %
\eqref{prob:app} is given by 
\begin{equation}
\min_{x,y}\{F_{r}(x,y):=G_{1}(x,y)-H_{1}(x,y)\},  \label{eqn:DC1}
\end{equation}%
where 
\begin{equation*}
G_{1}(x,y)=\chi _{K}(x,y)+g(x,y)+\lambda \eta \Vert x\Vert _{1},~
H_{1}(x,y)=h(x,y)+\lambda \sum_{i=1}^{n}\left( \eta |x_{i}|-r(x_{i})\right) ,
\end{equation*}%
and $g,h$ are DC components of $f$.

By the definition $\psi (t)=\eta |t|-r(t)~\forall t\in \mathbb{R}$, we have 
\begin{equation}  \label{subgrad:DCA1}
\partial \psi (t)=\eta +\partial (-r)(t) \text{ if }t>0, \: -\eta -\partial
(-r)(-t) \text{ if }t<0, \: \left[ -\eta +r^{\prime }(0),\eta -r^{\prime }(0)%
\right] \text{ if }t=0.
\end{equation}%
Following the generic DCA scheme described in Section \ref{dca}, DCA applied
on \eqref{eqn:DC1} is given by Algorithm \ref{DCA1} below. 
\begin{algorithm}[H]
\caption{DCA for solving \eqref{prob:app} (DCA1)} 
\label{DCA1}
\begin{algorithmic}%[1]
\STATE Initialize $(x^0,y^0) \in K$, $k \leftarrow 0$
\REPEAT
\STATE 1. Compute $(\overline{x}^k,\overline{y}^k) \in \partial h(x^k,y^k)$ and $\overline{z}^k_i \in \lambda \partial \psi(x^k_i)~\forall i=1,\dots,n$ via \eqref{subgrad:DCA1}.
\STATE 2. Compute
\[
(x^{k+1},y^{k+1}) \in \arg \min_{(x,y) \in K}  \left\{g(x,y) - \langle \overline{x}^k,x \rangle - \langle \overline{y}^k,y\rangle + \lambda \eta \|x\|_1 - \langle \overline{z}^k, x \rangle \right\}
\]
\STATE 3. $k \leftarrow k+1$.
\UNTIL{Stopping criterion}
\end{algorithmic}
\end{algorithm}

Instances of Algorithm~1 can be found in our previous works \cite%
{LeThietal2008a,LeThietal2009,Ong12} using exponential concave, SCAD or
Capped$-\ell _{1}$ approximations (see Table~\ref{tab:DCA1}). Note that for
usual sparse inducing functions given in Table~\ref{tab:DCA1}, this DC
decomposition is nothing but that given in Table~\ref{tab:app-form}, i.e. $%
\varphi (t)=\eta |t|$.

\begin{table}[tbp]
\caption{Choice of $\protect\eta$ and expression of $\overline{z}_i^k \in 
\protect\lambda \partial \protect\psi(x_i^k)$ in Algorithm \protect\ref{DCA1}
and related works.}
\label{tab:DCA1}\centering
%{\scriptsize 
\begin{tabular}{lllcl}
\hline
$r$ & $\eta$ & $\overline{z}_i^k \in \lambda \partial \psi(x_i^k)$ & Related
works & Context \\ \hline
\multirow{2}{*}{$r_{exp}$} & \multirow{2}{*}{$\theta$} & \multirow{2}{*}{$%
\mathrm{sign}(x_i^k) \lambda \theta \left(1-e^{-\theta |x_i^k|}\right)$} & 
\cite{LeThietal2008a} & Feature selection in SVMs \\ 
&  &  & \cite{Ong12} & Learning sparse classifiers \vspace{0.1cm} \\ 
$r_{\ell_p^+}$ & $\frac{\epsilon^{1/\theta - 1}}{\theta}$ & $\mathrm{sign}%
(x_i^k) \frac{\lambda}{\theta} \left[\epsilon^{1/\theta-1} -
(|x_i^k|+\epsilon)^{1/\theta-1}\right]$ &  & \vspace{0.1cm} \\ 
$r_{\ell_p^-}$ & $-p\theta$ & $-\mathrm{sign}(x_i^k) \lambda p \theta \left[%
1 - (1+\theta |x^k_i|)^{p-1}\right]$ &  & \vspace{0.1cm} \\ 
$r_{log}$ & $\frac{\theta}{\log(1+\theta)}$ & $\mathrm{sign}(x_i^k) \frac{%
\lambda \theta^2 |x^k_i|}{\log(1+\theta) (1+\theta |x^k_i|)}$ &  &  \\ 
$r_{scad}$ & $\frac{2\theta}{a+1}$ & $%
\begin{cases}
0 & |x_i^k| \le \frac{1}{\theta} \\ 
\mathrm{sign}(x_i^k) \frac{2\lambda \theta (\theta|x_i^k|-1)}{a^2-1} & \frac{%
1}{\theta} < |x_i^k| < \frac{a}{\theta} \\ 
\mathrm{sign}(x_i^k) \frac{2\lambda \theta}{a+1} & \text{otherwise}%
\end{cases}%
$ & \cite{LeThietal2009} & Feature selection in SVMs \\ 
$r_{cap}$ & $\theta$ & $%
\begin{cases}
0 & |x_i^k| \le \frac{1}{\theta} \\ 
\mathrm{sign}(x_i^k) \lambda \theta & \text{otherwise}%
\end{cases}%
$ & \cite{Ong12} & Learning sparse classifiers \\ \hline
\end{tabular}
%}
\end{table}

Now we consider the approximate problem (\ref{prob:app-z}) and introduce a
DCA scheme that includes all standard algorithms of reweighted-$\ell _{1}$%
-type for sparse optimization problem (\ref{prob:l0}).

\subsection{DCA2 - Relation with reweighted-$\ell _{1}$ procedure}

\label{sec.DCA2}

The problem \eqref{prob:app-z} can be written as a DC program as follows 
\begin{equation}
\min_{x,y,z}\{\overline{F}_{r}(x,y,z):=G_{2}(x,y,z)-H_{2}(x,y,z)\},
\label{eqn:DC2}
\end{equation}%
where 
\begin{equation*}
G_{2}(x,y,z)=\chi _{\Omega _{1}}(x,y,z)+g(x,y), ~
H_{2}(x,y,z)=h(x,y)+\lambda \sum_{i=1}^{n}(-r)(z_{i}),
\end{equation*}%
and $g,h$ are DC components of $f$ as stated in \eqref{eqn:fDC}.

Assume that $(x^{k},y^{k},z^{k})\in \Omega _{1}$ is the current solution at
iteration $k$. DCA applied to DC program \eqref{eqn:DC2} updates $%
(x^{k+1},y^{k+1},z^{k+1})\in \Omega _{1}$ via two steps:

\begin{enumerate}
\item[-] Step 1: compute $(\overline{x}^{k},\overline{y}^{k})\in \partial
h(x^{k},y^{k})$, and $\overline{z}_{i}^{k}\in \lambda \partial
(-r)(z_{i}^{k})~~\forall i=1,\dots,n$.

\item[-] Step 2: compute 
\begin{eqnarray*}
(x^{k+1},y^{k+1},z^{k+1}) &\in &\arg \min\left\{ G_{2}(x,y,z)-\langle 
\overline{x}^{k},x\rangle -\langle \overline{y}^{k},y\rangle -\langle 
\overline{z}^{k},z\rangle \right\} \\
&=&\mathop{\arg \min}_{(x,y,z)\in \Omega _{1}}\left\{ g(x,y)-\langle 
\overline{x}^{k},x\rangle -\langle \overline{y}^{k},y\rangle +\langle -%
\overline{z}^{k},z\rangle \right\} .
\end{eqnarray*}
\end{enumerate}

Since $r$ is increasing, we have $-\overline{z}^{k}\geq 0$. Thus, updating $%
(x^{k+1},y^{k+1},z^{k+1})$ can be done as follows 
\begin{equation*}
\begin{cases}
(x^{k+1},y^{k+1})\in \arg \min_{(x,y)\in K}\left\{ g(x,y)-\langle \overline{x%
}^{k},x\rangle -\langle \overline{y}^{k},y\rangle +\langle -\overline{z}%
^{k},|x|\rangle \right\} \\ 
z_{i}^{k+1}=|x_{i}^{k+1}|~~\forall i.%
\end{cases}%
\end{equation*}%
DCA for solving the problem \eqref{prob:app-z} can be described as in
Algorithm \ref{DCA2} below.

\begin{algorithm}[H]
\caption{DCA for solving (\ref{prob:app-z}) (DCA2)}
\label{DCA2}
\begin{algorithmic}%[1]
\STATE Initialize $(x^0,y^0,z^0) \in \Omega_1$, $k \leftarrow 0$
\REPEAT
\STATE 1. Compute $(\overline{x}^k,\overline{y}^k) \in \partial h(x^k,y^k)$, $\overline{z}^k_i \in -\lambda \partial(-r)(z^k_i)~~\forall i=1,\dots,n$.
\STATE 2. Compute
\begin{eqnarray*}
&&(x^{k+1},y^{k+1}) \in \mathop{\arg \min}_{(x,y) \in K}  \left\{g(x,y) - \langle \overline{x}^k,x \rangle - \langle \overline{y}^k,y \rangle + \langle \overline{z}^k,|x| \rangle\right\} \\
&&z^{k+1}_i = |x^{k+1}_i|~\forall i=1,\dots,n.
\end{eqnarray*}
\STATE 3. $k \leftarrow k+1$.
\UNTIL{Stopping criterion}
\end{algorithmic}
\end{algorithm}

If the function $f$ in (\ref{prob:l0}) is convex, we can chose DC components
of $f$ as $g=f$ and $h=0$. Then $(\overline{x}^{k},\overline{y}^{k})=0
~\forall k$. 
%Moreover, for $k\ge 1$ we have $z^k_i=|x^k_i|~~\forall %i=1,\dots,n$, so $\overline{z}^k_i \in -\lambda\partial (-r)(|x^k_i|)~\forall %i=1,\dots,n$. By ignoring variable $z$,
In this case, the step 2 in Algorithm \ref{DCA2} becomes 
\begin{equation}
(x^{k+1},y^{k+1})\in \arg \min_{(x,y)\in K}\left\{ f(x,y)+\sum_{i=1}^{n} 
\overline{z}_{i}^{k}|x_{i}|\right\} .  \label{eqn:weighted_l1}
\end{equation}%
We see that the problem \eqref{eqn:weighted_l1} has the form of a $\ell _{1}$%
-regularization problem but with different weights on components of $|x_{i}|$%
. So Algorithm \ref{DCA2} iteratively solves the weighted--$\ell _{1}$
problem \eqref{eqn:weighted_l1} with an update of the weights $\overline{z}%
_{i}^{k}$ at each iteration $k$. The expression of weights $\overline{z}%
_{i}^{k}$ according to approximation functions are given in Table \ref%
{tab:l1}.

The update rule \eqref{eqn:weighted_l1} covers standard algorithms of
reweighted--$\ell _{1}$--type for sparse optimization problem (\ref{prob:l0}%
) (see Table \ref{tab:l1}). Some algorithms such as the two--stage $\ell
_{1} $ (\cite{Zhang09}) and the adaptive Lasso (\cite{ZOU06}) only run in a
few iterations (typically two iterations) and their reasonings bear a
heuristic character. The reweighted--$\ell _{1}$ algorithm proposed in \cite%
{Candes08} lacks of theoretical justification for the convergence. 
%\textbf{The
%advantage of our DCA approach over the existing approaches is that we have a
%predefined objective function and firmly theoretical foundation concerning
%convergence issue. Thus, the obtained results will be more interpretable}.

\begin{table}[tbp]
\caption{Expression of $\overline{z}^k_i $ in Algorithm \protect\ref{DCA2}
and relation with reweighted-$\ell_1$ algorithms. }
\label{tab:l1}\centering
%{\scriptsize 
\begin{tabular}{llp{3cm}ll}
\hline
Function $r$ & expression of $\overline{z}^k_i$ & Related works & Context & 
\\ \hline\hline
$r_{exp}$ & $\lambda\theta e^{-\theta z^k_i}$ & SLA (\cite%
{Bradley-Mangasarian}) & Feature selection in SVMs &  \\ \hline
$r_{\ell_p^+}$ & $\dfrac{\lambda}{\theta(z^k_i + \epsilon)^{1-1/\theta}}$ & %
\multirow{2}{*}{Adaptive Lasso (\cite{ZOU06})} & \multirow{8}{*}{Linear
regression} \vspace{0.1cm} &  \\ 
\cmidrule{1-2} $r_{\ell_p^-}$ & $-\lambda p \theta (1+\theta z^k_i)^{p-1}$ & 
&  &  \\ 
\cmidrule{1-3} $r_{scad}$ & $%
\begin{cases}
\frac{2 \lambda \theta}{a+1} & \text{if } z^k_i \le \frac{1}{\theta} \\ 
0 & \text{if } z^k_i \ge \frac{a}{\theta} \\ 
\frac{\lambda \theta(a-\theta z^k_i)}{a^2-1} & \text{otherwise}%
\end{cases}%
$ & LLA (Local Linear Approximation) (\cite{Zou08}) &  &  \\ 
\cmidrule{1-3} $r_{cap}$ & $%
\begin{cases}
\lambda\theta & \text{if } z^k_i \le 1/\theta \\ 
0 & \text{otherwise}%
\end{cases}%
$ & Two-stage $\ell_1$ (\cite{Zhang09}) &  &  \\ \hline
$r_{log}$ & $\dfrac{\lambda \theta}{\log(1+\theta) (1 + \theta z^k_i)} $ & 
Adaptive Lasso (\cite{ZOU06}); Reweighted $\ell_1$ (\cite{Candes08}) & 
Sparse signal reconstruction &  \\ \hline
\end{tabular}
%}
\end{table}

Next, we introduce a slight perturbation of the formulation (\ref{prob:app})
and develop the third DCA scheme that includes existing algorithms of
reweighted--$\ell _{2}$--type for sparse optimization problem (\ref{prob:l0}%
).

\subsection{DCA3 - Relation with reweighted-$\ell _{2}$ procedure}

\label{sec.DCA3}

To avoid the singularity at $0$ of the function $r(t^{1/2}),t\geq 0$, we add 
$\epsilon >0$ and consider the perturbation problem of \eqref{prob:app}
which is defined by

\begin{equation}
\begin{cases}
\min_{x,y} & \tilde{F}_{r}(x,y):=f(x,y)+\lambda
\sum_{i=1}^{n}r((|x_{i}|^{2}+\epsilon )^{1/2}) \\ 
s.t. & (x,y)\in K,%
\end{cases}%
\quad \epsilon >0.  \label{prob:app-modified}
\end{equation}%
Clearly \eqref{prob:app-modified} becomes \eqref{prob:app} when $\epsilon =
0 $. The problem \eqref{prob:app-modified} is equivalent to 
\begin{equation}  \label{pb3}
\min_{(x,y,z)\in \Omega _{2}}\hat{F}_{r}(x,y,z):=f(x,y)+\lambda
\sum_{i=1}^{n}r((z_{i}+\epsilon )^{1/2}),
\end{equation}%
where $\Omega _{2}=\{(x,y,z):(x,y)\in K;\quad |x_{i}|^{2}\leq z_{i}~\forall
i\}$. The last problem is a DC program of the form 
\begin{equation}
\min_{x,y,z}\{\hat{F}_{r}(x,y,z):=G_{3}(x,y,z)-H_{3}(x,y,z)\},
\label{eqn:DC3}
\end{equation}%
where 
\begin{equation*}
G_{3}(x,y,z)=\chi _{\Omega _{2}}(x,y,z)+g(x,y),~ H_{3}(x,y,z)=h(x,y)+\lambda
\sum_{i=1}^{n}(-r)((z_{i}+\epsilon )^{1/2}),
\end{equation*}%
and $g,h$ are DC components of $f$ as stated in \eqref{eqn:fDC}. Note that,
since the functions $r$ and $(t+\epsilon )^{1/2}$ are concave, increasing on 
$[0,+\infty )$, $(-r)((t+\epsilon )^{1/2})$ is a convex function on $%
[0,+\infty )$.

Let $(x^{k},y^{k},z^{k})\in \Omega _{2}$ be the current solution at
iteration $k$. DCA applied to DC program \eqref{eqn:DC3} updates $%
(x^{k+1},y^{k+1},z^{k+1})\in \Omega _{2}$ via two steps:

\begin{enumerate}
\item[-] Step 1: compute $(\overline{x}^{k},\overline{y}^{k})\in \partial
h(x^{k},y^{k})$, and $\overline{z}_{i}^{k}\in \frac{\lambda }{%
2(z_{i}^{k}+\epsilon )^{1/2}}\partial (-r)((z_{i}^{k}+\epsilon
)^{1/2})~~\forall i=1,\dots,n$.

\item[-] Step 2: compute 
\begin{eqnarray*}
(x^{k+1},y^{k+1},z^{k+1}) &\in &\arg \min\left\{ G_{3}(x,y,z)-\langle 
\overline{x}^{k},x\rangle -\langle \overline{y}^{k},y\rangle -\langle 
\overline{z}^{k},z\rangle \right\} \\
&=&\mathop{\arg \min}_{(x,y,z)\in \Omega _{2}}\left\{ g(x,y)-\langle 
\overline{x}^{k},x\rangle -\langle \overline{y}^{k},y\rangle +\langle -%
\overline{z}^{k},z\rangle \right\}
\end{eqnarray*}
\end{enumerate}

Since $r$ is increasing, we have $-\overline{z}^{k}\geq 0$. Thus, updating $%
(x^{k+1},y^{k+1},z^{k+1})$ can be done as follows 
\begin{equation*}
\begin{cases}
(x^{k+1},y^{k+1})\in \mathop{\arg \min}_{(x,y)\in K}\left\{ g(x,y)-\langle 
\overline{x}^{k},x\rangle -\langle \overline{y}^{k},y\rangle
+\sum_{i=1}^{n}(-\overline{z}_{i}^{k})x_{i}^{2}\rangle \right\} \\ 
z_{i}^{k+1}=|x_{i}^{k+1}|^{2}~~\forall i=1,\dots ,n.%
\end{cases}%
\end{equation*}%
DCA for solving the problem \eqref{pb3} can be described as in Algorithm \ref%
{DCA3} below.

\begin{algorithm} 
\caption{DCA for solving (\ref{pb3}) (DCA3)}
\label{DCA3}
\begin{algorithmic}%[1]
\STATE Initialize $(x^0,y^0,z^0) \in \Omega_2$, $k \leftarrow 0$
\REPEAT
\STATE 1. Compute $(\overline{x}^k,\overline{y}^k) \in \partial h(x^k,y^k)$, $\overline{z}^k_i \in  \frac{-\lambda}{2 (z^k_i+\epsilon)^{1/2}}\partial(-r)(z^k_i+\epsilon)^{1/2})~~\forall i=1,\dots,n$.
\STATE 2. Compute
\begin{eqnarray*}
&&(x^{k+1},y^{k+1}) \in \mathop{\arg \min}_{(x,y) \in K}  \left\{g(x,y) - \langle \overline{x}^k,x \rangle - \langle \overline{y}^k,y \rangle + \sum_{i=1}^n \overline{z}^k_i x^2_i \right\}, \\
&&z_{i}^{k+1}=|x_{i}^{k+1}|^{2}~~\forall i=1,\dots ,n.
\end{eqnarray*}
\STATE 3. $k \leftarrow k+1$.
\UNTIL{Stopping criterion}
\end{algorithmic}
\end{algorithm}

If the function $f$ in (\ref{prob:l0}) is convex, then, as before, we can
chose DC components of $f$ as $g=f$ and $h=0$. Hence, in the step 1 of
Algorithm \ref{DCA3}, we have $(\overline{x}^{k},\overline{y}^{k})=0~\forall
k$. 
%Moreover, for $k\ge 1$ we have $z^k_i=|x^k_i|^2~~\forall i=1,\dots,n$, so $\overline{z}^k_i \in  \frac{-\lambda}{2 %(|x^k_i|^2+\epsilon)^{1/2}}\partial(-r)(|x^k_i|^2+\epsilon)^{1/2})~\forall i=1,\dots,n$. By ignoring variable $z$,
In this case, the step 2 in Algorithm \ref{DCA3} becomes 
\begin{equation}
(x^{k+1},y^{k+1})\in \arg \min_{(x,y)\in K}\left\{ f(x,y)+\sum_{i=1}^{n}%
\overline{z}_{i}^{k}x_{i}^{2}\right\} .  \label{eqn:weighted_l2}
\end{equation}%
Thus, each iteration of Algorithm \ref{DCA3} solves a weighted-$\ell _{2}$
optimization problem. The expression of weights $\overline{z}_{i}^{k}$
according to approximation functions are given in Table \ref{tab:l2}.

If $\epsilon =0$ then the update rule \eqref{eqn:weighted_l2} encompasses
standard algorithms of reweighted-$\ell _{2}$ type for finding sparse
solution (see Table \ref{tab:l2}). However, when $\epsilon =0$ the (right)
derivative at 0 of $r(t^{1/2})$\ is not well-defined, that is why we take $%
\epsilon >0$ in our algorithm. Note also that, in LQA and FOCUSS, if at an
iteration $k$ one has $x_{i}^{k}=0$\ then $x_{i}^{l}=0$ for all $l\geq k,$
by the way these algorithms may converge prematurely to bad solutions.

\begin{table}[tbp]
\caption{Expression of $\overline{z}^k_i$'s in Algorithm \protect\ref{DCA3}
and relation with reweighted-$\ell_2$ algorithms.}
\label{tab:l2}\setlength{\tabcolsep}{0.2cm} \centering
%{\scriptsize
\begin{tabular}{llp{4cm}ll}
\hline
Function $r$ & weight $\overline{z}^k_i$($t^k_i = (z^k_i+\epsilon)^{1/2}$) & 
Related works & Context &  \\ \hline\hline
$r_{exp}$ & $\dfrac{\lambda \theta}{2} \dfrac{e^{-\theta t^k_i}}{t^k_i}$ & 
&  & \vspace{0.1cm} \\ \hline
$r_{\ell_p^+}$ & $\dfrac{\lambda}{2\theta (t^k_i)^{2 - \frac{1}{\theta}}} $
& \multirow{3}{*}{FOCUSS (\cite{Rao97,Rao99,Rao03}); } & %
\multirow{3}{*}{Sparse signal} &  \\ 
\cmidrule{1-2} $r_{\ell_p^-}$ & $\dfrac{-\lambda p \theta^p}{2 t^k_i (\frac{1%
}{\theta} + t^k_i)^{1-p}}$ & \multirow{3}{*}{IRLS (\cite{Chartrand08})} & %
\multirow{3}{*}{reconstruction} &  \\ 
\cmidrule{1-2} $r_{log}$ & $\dfrac{\lambda}{2 \log(1+\theta)} \dfrac{1}{%
t^k_i(\frac{1}{\theta}+ t^k_i)}$ &  & \vspace{0.1cm} &  \\ \hline
$r_{cap}$ & $%
\begin{cases}
\frac{\lambda\theta}{2t^k_i} & \text{if } |t^k_i| \le \frac{1}{\theta} \\ 
0 & \text{otherwise}%
\end{cases}%
$ & \vspace{0.1cm} &  &  \\ \hline
$r_{scad}$ & $%
\begin{cases}
\frac{\lambda \theta}{(a+1) t^k_i} & \text{if } t^k_i \le \frac{1}{\theta}
\\ 
0 & \text{if } t^k_i \ge \frac{a}{\theta} \\ 
\frac{\lambda \theta (-\theta t^k_i + a)}{(a^2-1)t^k_i} & \text{otherwise}%
\end{cases}%
$ & LQA (\cite{FAN01,Zou08}) & Linear regression &  \\ \hline
\end{tabular}
%}
\end{table}

\subsection{Discussion on the three DCA based algorithms \protect\ref{DCA1}, \protect\ref{DCA2}
and \protect\ref{DCA3}}

Algorithm \ref{DCA1} seems to be the most interesting in the sense that it
addresses directly the problem \eqref{prob:app} and doesn't need the
additional variable $z$, then the subproblem has less constraints than that in
Algorithms \ref{DCA2} and \ref{DCA3}. Moreover, the DC decomposition (\ref%
{decompr}) is more suitable since it results, in several cases, in a DC
polyhedral program where both DC components are polyhedral convex (for
instance, in feature selection in SVM with the approximations $r_{scad},$ $%
r_{cap}$) for which Algorithm \ref{DCA1}  enjoys interesting convergence properties. 

Algorithms \ref{DCA2} and \ref{DCA3} are based on two
different formulations of the problem \eqref{prob:app}. In \eqref{prob:app-z}%
, we have linear constraints $|x|_{i}\leq z_{i},~i=1,\dots ,n$ that lead to
the subproblem of weighted--$\ell _{1}$ type. Whereas, in %
\eqref{prob:app-modified}, quadratic constraints $|x|_{i}^{2}\leq
z_{i},~i=1,\dots ,n$ result to the subproblem of weighted--$\ell _{2}$ type.
With second order terms in subproblems, Algorithm \ref{DCA3} is, in general,
more expensive than Algorithms \ref{DCA1} and \ref{DCA2}. We also see that
Algorithms \ref{DCA1} and \ref{DCA2} possess nicer convergence properties
than Algorithm \ref{DCA3}. Both Algorithms \ref{DCA1} and \ref{DCA2} have
finite convergence when the corresponding DC programs are polyhedral DC.
While \eqref{prob:app-modified} can't be a polyhedral DC program because the
set $\Omega _{2}$ and the functions $r((t+\epsilon )^{1/2})$ are not
polyhedral convex.

To compare the sparsity of solutions given by the algorithms, we consider
the subproblems in Algorithms \ref{DCA1}, \ref{DCA2}, and \ref{DCA3} which
have the form 
\begin{equation*}
\min_{(x,y)\in K}\left\{ g(x,y)-\langle \overline{x}^{k},x\rangle -\langle 
\overline{y}^{k},y\rangle +\lambda \sum_{i=1}^{n}\nu
(x_{i},x_{i}^{k})\right\}
\end{equation*}%
where $(\overline{x}^{k},\overline{y}^{k})\in \partial h(x^{k},y^{k})$, 
\begin{equation*}
\nu (x_{i},x_{i}^{k})=%
\begin{cases}
\nu _{1}(x_{i},x_{i}^{k})=\eta |x_{i}|-\mathrm{sign}(x_{i}^{k})(\eta -%
\overline{z}_{i}^{k})x_{i}+C_{i}^{k} & \text{for Algorithm \ref{DCA1}} \\ 
\nu _{2}(x_{i},x_{i}^{k})=\overline{z}_{i}^{k}|x_{i}|+C_{i}^{k} & \text{for
Algorithm \ref{DCA2}} \\ 
\nu _{3}(x_{i},x_{i}^{k})=\frac{\overline{z}_{i}^{k}}{2|x_{i}^{k}|}%
|x_{i}|^{2}+\frac{1}{2}\overline{z}_{i}^{k}|x_{i}^{k}|+C_{i}^{k} & \text{for
Algorithm \ref{DCA3}},%
\end{cases}%
\end{equation*}%
with $\overline{z}_{i}^{k}\in -\partial (-r)(|x_{i}^{k}|)$, $%
C_{i}^{k}=r(x_{i}^{k})-\overline{z}_{i}^{k}|x_{i}^{k}|$ and $\eta =r^{\prime
}(0)$.

All three functions $\nu _{1}$, $\nu _{2}$ and $\nu _{3}$ attain minimum at $%
0$ and encourage solutions to be zero. Denote by $\nu _{-}^{\prime }(t)$ and 
$\nu _{+}^{\prime }(t)$ the left and right derivative at $t$ of $\nu $
respectively. We have 
\begin{eqnarray*}
&&\nu _{1,-}^{\prime }(0,x_{i}^{k})=-2\eta +\overline{z}_{i}^{k},\quad \nu
_{2,-}^{\prime }(0,x_{i}^{k})=-\overline{z}_{i}^{k},\quad \nu _{3,-}^{\prime
}(0,x_{i}^{k})=0, \\
&&\nu _{1,+}^{\prime }(0,x_{i}^{k})=\overline{z}_{i}^{k},\quad \nu
_{2,+}^{\prime }(0,x_{i}^{k})=\overline{z}_{i}^{k},\quad \nu _{3,+}^{\prime
}(0,x_{i}^{k})=0.
\end{eqnarray*}%
We also have $\eta \geq \overline{z}_{i}^{k}$ by the concavity of $r$ on $%
[0,+\infty )$. Observe that if the range $[\nu _{-}^{\prime }(0),\nu
_{+}^{\prime }(0)]$ is large, it encourages more sparsity. Intuitively, the
values $\nu _{-}^{\prime }(0)$ and $\nu _{+}^{\prime }(0)$ reflect the slope
of $\nu $ at $0$, and if the slope is hight, it forces solution to be zero.
Here we have $[\nu _{3,-}^{\prime }(0,x_{i}^{k}),\nu _{3,+}^{\prime
}(0,x_{i}^{k})]\subset \lbrack \nu _{2,-}^{\prime }(0,x_{i}^{k}),\nu
_{2,+}^{\prime }(0,x_{i}^{k})]\subset \lbrack \nu _{1,-}^{\prime
}(0,x_{i}^{k}),\nu _{1,+}^{\prime }(0,x_{i}^{k})]$. Thus, we expect that
Algorithm \ref{DCA1} gives sparser solution than Algorithm \ref{DCA2}, and
Algorithm \ref{DCA2} gives sparser solution than Algorithm \ref{DCA3}.

\begin{figure}[tbp]
\begin{center}
\includegraphics[scale=0.5]{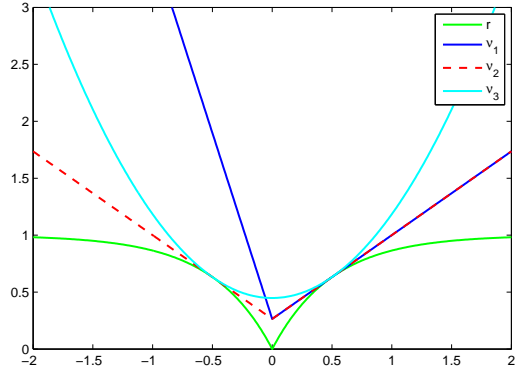}
\end{center}
\caption{Graphs of functions: $r=1-e^{-2|x|}$, $\protect\nu _{1}$, $\protect%
\nu _{2}$ and $\protect\nu _{3}$ with $x^{k}=0.5$.}
\label{Fig.2}
\end{figure}

\subsection{DCA4: DCA applied on \eqref{prob:app} with the new DC
approximation}

\label{new_approximation}

We have proposed three DCA schemes for solving (\ref{prob:app}) or its
equivalent form (\ref{prob:app-z}) when $r$ is a concave function on $%
[0,+\infty )$. Consider now the general case where $r$ is a DC function
satisfying Assumption 1. Hence the problem \eqref{prob:app} can be expressed
as a DC program \eqref{eqn:DC} for which DCA is applicable. Each iteration
of DCA applied on \eqref{eqn:DC} consists of computing

- Compute $(\overline{x}^k,\overline{y}^k) \in \partial h(x^k,y^k)$ and $%
\overline{z}^k_i \in \lambda \partial \psi(x^k_i)~\forall i=1,\dots,n$.

- Compute $(x^{k+1},y^{k+1})$ as a solution of the following convex program 
\begin{equation}
\min_{(x,y)\in K}\left\{ g(x,y)-\langle \overline{x}^{k},x\rangle -\langle 
\overline{y}^{k},y\rangle +\lambda \sum_{i=1}^{n}\varphi (x_{i})-\langle 
\overline{z}^{k},x\rangle \right\} .
\end{equation}

The new approximation function $r_{PiL}$ is a DC function but not concave on 
$[0,+\infty ).$ Hence we apply DCA4 for solving the problem \eqref{prob:app}
with $r=r_{PiL}$ 
\begin{equation}
r_{PiL}=\min \left\{ 1,\max \left\{ 0,\frac{\theta |t|-1}{a-1}\right\}
\right\} =%
\begin{cases}
0 & \text{if }|t|\leq \frac{1}{\theta }, \\ 
\frac{\theta |t|-1}{a-1} & \text{if }\frac{1}{\theta }<|t|<\frac{a}{\theta },
\\ 
1 & \text{otherwise},%
\end{cases}%
\quad a>1.
\end{equation}%
DC components of $r_{PiL}$ are given by 
\begin{equation}
\varphi _{PiL}(t):=\frac{\theta }{a-1}\max \left\{ \frac{1}{\theta }%
,|t|\right\} ,~\psi _{PiL}(t):=\frac{\theta }{a-1}\max \left\{ \frac{a}{%
\theta },|t|\right\} -1\quad \forall t\in \mathbb{R},
\end{equation}%
that are polyhedral convex functions. Then, the problem \eqref{prob:app} can
be expressed in form of a DC program as follows%
\begin{equation}
\min_{x,y}\{F_{r_{PiL}}(x,y):=G_{4}(x,y)-H_{4}(x,y)\},  \label{eqn:DC4}
\end{equation}%
where 
\begin{equation*}
G_{4}(x,y)=\chi _{K}(x,y)+g(x,y)+\lambda \sum_{i=1}^{n}\varphi
_{PiL}(x_{i}),~H_{4}(x,y)=h(x,y)+\lambda \sum_{i=1}^{n}\psi _{PiL}(x_{i}),
\end{equation*}%
and $g,h$ are DC components of $f$ as stated in \eqref{eqn:fDC}.

At each iteration $k$, DCA applied to \eqref{eqn:DC4} updates $%
(x^{k+1},y^{k+1})$ from $(x^k,y^k)$ via two steps:

- Compute $(\overline{x}^k,\overline{y}^k) \in \partial h(x^k,y^k)$ and $%
\overline{z}^k_i \in \lambda \partial \psi_{PiL}(x^k_i)~\forall i=1,\dots,n$.

- Compute $(x^{k+1},y^{k+1})$ as a solution of the following convex program 
\begin{equation}
\min_{(x,y)\in K}\left\{ g(x,y)-\langle \overline{x}^{k},x\rangle -\langle 
\overline{y}^{k},y\rangle +\frac{\lambda \theta }{a-1}\sum_{i=1}^{n}\max
\left\{ \frac{1}{\theta },|x_{i}|\right\} -\langle \overline{z}^{k},x\rangle
\right\} .  \label{eqn:Pk}
\end{equation}%
Calculation of $\overline{z}_{i}^{k}~(i=1,\dots ,n)$ is given by 
\begin{equation}
\overline{z}_{i}^{k}=%
\begin{cases}
\frac{\lambda \theta }{a-1} & \text{if }x_{i}^{k}>\frac{a}{\theta } \\ 
\frac{-\lambda \theta }{a-1} & \text{if }x_{i}^{k}<\frac{-a}{\theta } \\ 
0 & \text{otherwise}.%
\end{cases}
\label{eqn:subgrad}
\end{equation}%
Furthermore, \eqref{eqn:Pk} is equivalent to 
\begin{equation}
\min_{(x,y,t)\in \Omega _{3}}\left\{ g(x,y)-\langle \overline{x}%
^{k},x\rangle -\langle \overline{y}^{k},y\rangle +\frac{\lambda \theta }{a-1}%
\sum_{i=1}^{n}t_{i}-\langle \overline{z}^{k},x\rangle \right\} ,
\label{eqn:Pk-eqv}
\end{equation}%
where $\Omega _{3}=\left\{ (x,y,t):(x,y)\in K,\frac{1}{\theta }\leq
t_{i},x_{i}\leq t_{i},-x_{i}\leq t_{i}~\forall i=1,\dots ,n\right\} $.

\begin{algorithm}[H]
\caption{DCA   applied to \eqref{eqn:DC4} (DCA4)}
\label{DCA4}
\begin{algorithmic}%[1]
\STATE Initialize $(x^0,y^0) \in K$, $k \leftarrow 0$
\REPEAT
\STATE 1. Compute $(\overline{x}^k,\overline{y}^k) \in \partial h(x^k,y^k)$ and $\overline{z}^k_i \in \lambda \partial \psi_{PiL}(x^k_i)~\forall i=1,\dots,n$ via \eqref{eqn:subgrad}.
\STATE 2. Solve the convex problem \eqref{eqn:Pk-eqv} to obtain $(x^{k+1},y^{k+1})$.
\STATE 3. $k \leftarrow k+1$.
\UNTIL{Stopping criterion.}
\end{algorithmic}
\end{algorithm}

%\begin{theorem}
%\begin{enumerate}
%\item DCA\ref{DCA4} generates a sequence $\{(x^k,y^k)\}$ such that the sequence $\{F_r(x^k,y^k)\}$ is monotonously decreasing.
%\item If $(x^*,y^*)$ is a limit point of the sequence $\{(x^k,y^k)\}$ then $(x^*,y^*)$ is a critical point of the DC program \eqref{eqn:DC4}.
%\item The sequence $\{(x^k,y^k)\}$ converges to $(x^*,y^*)$ after a finite number of iterations in two following cases
%\begin{enumerate}
%\item The first DC component of $f$, say $g$, is polyhedral convex and $K$ is a polyhedral convex set.
%\item The second DC component of $f$, say $h$, is polyhedral convex.
%\end{enumerate}
%\end{enumerate}
%\end{theorem}
%\begin{proof}
%These are consequences of the convergence properties of general DC programs and those of DC polyhedral programs.
%\end{proof}

\subsection{Updating $\protect\theta $ procedure}

\label{sect:update_theta} 
%We assume that $f$ is convex, then DC components of $f$ can be taken by $g=f$ and $h=0$.
According to consistency results, the larger $\theta $ is, the better
approximate solution would be. However, from a computational point of view,
with large values of $\theta $, the approximate problems are difficult and
the algorithms converge often to local minimums. 
%the approximate problem \eqref{prob:app} suffers from containing many local minima and being
% a difficult optimization problem. 
We can overcome this bottleneck by using an update procedure for $\theta $.
Starting with a chosen value $\theta ^{0}$, at each iteration $k$, we
compute $(x^{k+1},y^{k+1})$ from $(x^{k},y^{k})$ by applying the DCA based
algorithms with $\theta =\theta ^{k}$. The sequence $\{\theta ^{k}\}_{k}$ is
increasing by $\theta ^{k+1}=\theta ^{k}+\Delta \theta ^{k}$. $\Delta \theta
^{k}$ can be fixed or updated during the iterations (see Experiment 1 in the
next section).

%In the experiment, we describe in detail another strategy of building the sequence $\{\theta^k\}_{k}$ for capped-$\ell_1$ and SCAD approximations. 

%+++++++++++++

\section{Application to Feature selection in SVM}

\label{num}

In this section we focus on the context of Support Vector Machines learning
with two-class linear models. Generally, the problem can be formulated as
follows.

Given two finite point sets $\mathcal{A}$ (with label $+1$) and $\mathcal{B}$
(with label $-1$) in $\mathbb{R}^{n}$ represented by the matrices $A\in 
\mathbb{R}^{N_{A}\times n}$ and $B\in \mathbb{R}^{N_{B}\times n}$,
respectively, we seek to discriminate these sets by a separating hyperplane (%
$x\in \mathbb{R}^{n},b\in \mathbb{R)}$ 
\begin{equation}
P=\{w\in \mathbb{R}^{n}:w^{T}x=b\}
\end{equation}%
which uses as few features as possible. We adopt the notations introduced in 
\cite{Bradley-Mangasarian} and consider the optimization problem proposed in 
\cite{Bradley-Mangasarian} that takes the form ($e\in \mathbb{R}^{n}$ being
the vector of ones): 
\begin{equation}
\min_{x,b}(1-\lambda )\left( \frac{1}{N_{A}}\Vert \max \{0,-Ax+eb+e\}\Vert
_{1}+\frac{1}{N_{B}}\Vert \max \{0,Bx-eb+e\}\Vert _{1}\right) +\lambda
\left\Vert x\right\Vert _{0}  \label{svm1}
\end{equation}%
or equivalently 
\begin{equation}
\begin{array}{ll}
\min_{x,y,\xi ,\zeta } & (1-\lambda )(\frac{1}{N_{A}}e^{T}\xi +\frac{1}{N_{B}%
}e^{T}\zeta )+\lambda \left\Vert x\right\Vert _{0} \\ 
s.t. & -Ax+eb+e\leq \xi ,\ Bx-eb+e\leq \zeta ,\ \xi \geq 0,\ \zeta \geq 0.%
\end{array}
\label{pbmain}
\end{equation}%
The nonnegative slack variables $\xi _{j},j=1,...,N_{A}$ represent the
errors of classification of $a_{j}\in \mathcal{A}$ while $\zeta
_{j},j=1,..., N_{B}$ represent the errors of classification of $b_{j}\in 
\mathcal{B}$. More precisely, each positive value of\textbf{\ }$\xi _{j}$%
 determines the distance between a point\textbf{\ }$a_{j}\in A$%
(lying on the wrong side of the bounding hyperplane
$w^{T}x=b+1$ for $\mathcal{A)}$ and the
hyperplane itself. Similarly for $\zeta _{j}$, $\mathcal{B}$ and $w^{T}x=b-1$%
. The first term of the objective function of (\ref{pbmain}) is the average
error of classification, and the second term is the number of nonzero
components of the vector $x$, each of which corresponds to a representative
feature. Further, if an element of $x$ is zero, the corresponding feature is
removed from the dataset. Here $\lambda $ is a control parameter of the
trade-off between the training error and the number of selected features.

Observe that the problem (\ref{pbmain}) is a special case of (\ref{mainpbl0}) where the function $f$ is given by

\begin{equation}
f(x,b ,\xi ,\zeta ):=(1-\lambda )\left(\frac{1}{N_A}e^{T}\xi +\frac{1}{N_B}%
e^{T}\zeta \right)  \label{eqf}
\end{equation}%
and $K$ is a polytope defined by

\begin{equation}
K:=\left\{ (x,b,\xi ,\zeta )\in \mathbb{R}^{n}\times \mathbb{R}\times 
\mathbb{R}_{+}^{N_{A}}\times \mathbb{R}_{+}^{N_{B}}:-Ax+eb+e\leq \xi ,\
Bx-eb+e\leq \zeta \right\} .  \label{eqk}
\end{equation}%
Then the approximate problem takes the form 
\begin{equation}
\min \left\{ F(x,b,\xi ,\zeta ):=f(x,b,\xi ,\zeta )+\lambda
\sum_{i=1}^{n}r(x_{i}):(x,b,\xi ,\zeta )\in K\right\} ,  \label{prob:fsDC}
\end{equation}%
where $r$ is one of the sparsity-inducing functions given in Table \ref%
{tab:app-form}. This problem is also equivalent to 
\begin{equation}
\min \left\{ \overline{F}(x,b,\xi ,\zeta ,z):=f(x,b,\xi ,\zeta )+\lambda
\sum_{i=1}^{n}r(z_{i}):(x,b,\xi ,\zeta ,z)\in \overline{K}\right\} ,
\label{prob:fsDC-eqv}
\end{equation}%
where $\overline{K}=\left\{ (x,b,\xi ,\zeta ,z):(x,b,\xi ,\zeta )\in
K,-z_{i}\leq x_{i}\leq z_{i}~\forall i=1,\dots ,n\right\} $.

Note that, since $K$ is a polyhedral convex set, all the resulting
approximate problems \eqref{prob:fsDC} with approximation functions given in
Table~\ref{tab:DCA1} \textbf{(}except for $r=r_{PiL}$) are equivalent to the
problem \eqref{pbmain} in the sense of Corollary \ref{crl:1}. More strongly,
from Proposition \ref{prop:equiv-spe}, if $r=r_{cap}$ and $\theta
>\theta^*:= \frac{1-\lambda }{\lambda }\Delta $, where 
\begin{equation}
\Delta :=\max_{j=1,\dots ,n}\left\{ \frac{1}{N_{A}}%
\sum_{i=1}^{N_{A}}|A_{ij}|+\frac{1}{N_{B}}\sum_{i=1}^{N_{B}}|B_{ij}|\right\}
,  \label{deltasvm}
\end{equation}%
then the problems \eqref{pbmain} and \eqref{prob:fsDC} are equivalent.

Here the function $f$ is simply linear, and DC components of $f$ is taken as 
$g = f$ and $h = 0$. According to Algorithms \ref{DCA1}, \ref{DCA2}, \ref%
{DCA3} and \ref{DCA4}, DCA for solving the problem \eqref{prob:fsDC} is
described briefly as follows. \newline
\newline
\textbf{DCA1:} For $\eta$ given in Table \ref{tab:DCA1}, let $\psi (t) =
\eta |t| - r(t)$. At each iteration $k$, DCA\ref{DCA1} for solving %
\eqref{prob:fsDC} consists of

- Compute $\overline{z}_i^k \in \lambda \partial \psi(x_i^k)~\forall
i=1,\dots,n$ as given in Table \ref{tab:DCA1}.

- Compute $(x^{k+1},b^{k+1},\xi^{k+1},\zeta^{k+1})$ by solving the linear
program 
\begin{equation}  \label{Pk:fs:DCA1}
\min \left\{ (1-\lambda )\left(\frac{1}{N_A}e^{T}\xi +\frac{1}{N_B}%
e^{T}\zeta \right) + \lambda \eta \sum_{i=1}^n z_i - \langle \overline{z}%
^k,x \rangle : (x,b,\xi,\zeta,z) \in \overline{K}\right\}.
\end{equation}

Since $f$ is linear and $K$ is a polyhedral convex set, the first DC
component $G_{1}$ in \eqref{eqn:DC1} is polyhedral convex. Therefore, %
\eqref{eqn:DC1} is always a polyhedral DC program. According to the
convergence property of polyhedral DC programs, DCA\ref{DCA1} applied to %
\eqref{prob:fsDC} generates a sequence $\{(x^{k},b^{k},\xi ^{k},\zeta
^{k})\} $ that converges to a critical point $(x^{\ast },b^{\ast },\xi
^{\ast },\zeta ^{\ast })$ after finitely many iterations. Furthermore, if $%
r=r_{cap} $ and $|x_{i}^{\ast }|\neq \frac{1}{\theta }~\forall i=1,\dots ,n$%
, the second DC component $H_{1}$ in \eqref{eqn:DC1} is polyhedral convex
and differentiable at $(x^{\ast },b^{\ast },\xi ^{\ast },\zeta ^{\ast })$.
Using the DCA's convergence property v) in Sect. \ref{dca}, we deduce that $%
(x^{\ast },b^{\ast },\xi ^{\ast },\zeta ^{\ast })$ is a local solution of %
\eqref{prob:fsDC}. \newline
\newline
\textbf{DCA2:} At each iteration $k$, DCA\ref{DCA2} for solving %
\eqref{prob:fsDC} consists of

- Compute $\overline{z}^k_i \in - \lambda \partial (-r)(|x_i^k|)~\forall
i=1,\dots,n$ as given in Table \ref{tab:l1}.

- Compute $(x^{k+1},b^{k+1},\xi^{k+1},\zeta^{k+1})$ by solving the linear
program 
\begin{equation*}
\min \left\{ (1-\lambda )\left(\frac{1}{N_A}e^{T}\xi +\frac{1}{N_B}%
e^{T}\zeta \right) + \langle \overline{z}^k,z \rangle : (x,b,\xi,\zeta,z)
\in \overline{K}\right\}.
\end{equation*}

Similar to the case of DCA1 mentioned above, \eqref{eqn:DC2} is also a
polyhedral DC program. Thus, DCA\ref{DCA2} applied to \eqref{prob:fsDC-eqv}
generates a sequence $\{(x^{k},b^{k},\xi ^{k},\zeta ^{k},|x^{k}|)\}$ that
converges to a critical point $(x^{\ast },b^{\ast },\xi ^{\ast },\zeta
^{\ast },|x^{\ast }|)$ after finitely many iterations. Furthermore, if $%
r=r_{cap}$ and $|x_{i}^{\ast }|\neq \frac{1}{\theta }~\forall i=1,\dots ,n$,
the second DC component $H_{2}$ in \eqref{eqn:DC2} is polyhedral convex and
differentiable at $(x^{\ast },b^{\ast },\xi ^{\ast },\zeta ^{\ast },|x^{\ast
}|)$. Then $(x^{\ast },b^{\ast },\xi ^{\ast },\zeta ^{\ast },|x^{\ast }|)$
is a local solution of \eqref{prob:fsDC-eqv}. \newline
\newline
\textbf{DCA3:} At each iteration $k$, DCA\ref{DCA3} for solving %
\eqref{prob:fsDC} consists of

- Compute $\overline{z}^k_i \in \frac{- \lambda}{2(|x_i^k|^2+\epsilon)^{1/2}}
\partial (-r)((|x_i^k|^2+\epsilon)^{1/2})~\forall i=1,\dots,n$ as given in
Table \ref{tab:l2}.

- Compute $(x^{k+1},b^{k+1},\xi^{k+1},\zeta^{k+1})$ by solving the quadratic
convex program 
\begin{equation*}
\min \left\{(1-\lambda )\left(\frac{1}{N_A}e^{T}\xi +\frac{1}{N_B}e^{T}\zeta
\right) + \sum_{i=1}^n \overline{z}^k_i x_i^2 : (x,b,\xi,\zeta) \in
K\right\}.
\end{equation*}
\textbf{DCA4:} Consider the case $r = r_{PiL}$. At each iteration $k$, DCA%
\ref{DCA4} for solving \eqref{prob:fsDC} consists of

- Compute $\overline{z}^k_i \in \lambda \partial \psi_{PiL}(x^k_i)~\forall
i=1,\dots,n$ via \eqref{eqn:subgrad}.

- Compute $(x^{k+1},b^{k+1},\xi^{k+1},\zeta^{k+1})$ by solving the linear
program 
\begin{eqnarray*}
\min && \left\{(1-\lambda )\left(\frac{1}{N_A}e^{T}\xi +\frac{1}{N_B}%
e^{T}\zeta \right) + \frac{\lambda \theta}{a-1} \sum_{i=1}^n t_i - \langle 
\overline{z}^k, x \rangle \right\}, \\
s.t. && (x,b,\xi,\zeta,t) \in \overline{K}, \frac{1}{\theta} \le t_i
~\forall i=1,\dots,n.
\end{eqnarray*}

Since the second DC component $H_{4}$ in \eqref{eqn:DC4} is polyhedral
convex, \eqref{eqn:DC4} is a polyhedral DC program. Thus, DCA\ref{DCA4}
applied to \eqref{prob:fsDC} generates a sequence $\{(x^{k},b^{k},\xi
^{k},\zeta ^{k})\}$ that converges to a critical point $(x^{\ast },b^{\ast
},\xi ^{\ast },\zeta ^{\ast })$ after finitely many of iterations. Moreover,
if $|x_{i}^{\ast }|\neq \frac{1}{\theta }~\forall i=1,\dots ,n$, then $H_{4}$
is differentiable at $(x^{\ast },b^{\ast },\xi ^{\ast },\zeta ^{\ast })$.
This implies that $(x^{\ast },b^{\ast },\xi ^{\ast },\zeta ^{\ast })$ is a
local solution of \eqref{prob:fsDC}. \newline

The stopping criterion of our algorithms is given by 
\begin{equation*}
\|x^{k+1}-x^k\| + |b^{k+1}-b^k| + \|\xi^{k+1}-\xi^k\| +
\|\zeta^{k+1}-\zeta^k\| \le \tau (1+\|x^k\|+|b^k|+\|\xi^k\|+\|\zeta^k\|),
\end{equation*}
where $\tau$ is a small tolerance. \newline

We have seen in Sect. \ref{exact} that the approximate problem using Capped-$%
\ell _{1}$ and SCAD approximations are equivalent to the original problem if
the parameter $\theta $ is beyond a certain threshold: $\theta \geq \theta
_{0}$ (cf. Proposition \ref{cap} and Proposition \ref{propo_equi}). However,
the computation of such a value $\theta _{0}$ is in general not available,
hence one must take large enough values for $\theta _{0}$. But, as discussed
in Sect. \ref{sect:update_theta}, a large value of $\theta $ makes the
approximate problem hard to solve. For the feature selection in SVM, we can
compute exactly a $\theta _{0}$ as shown in (\ref{deltasvm}), but it is
quite large. Hence we use an updating\textbf{\ }$\theta $\textbf{\ }%
procedure. On the other hand, in the DCA1 scheme, at each iteration, we have
to compute $\bar{z}^{k}\in \partial \lambda \psi (x^{k})$ and when $\psi $
is not differentiable at $x^{k}$, the choice of $\bar{z}^{k}$ can influence
on the efficiency of the algorithm. For Capped-$\ell _{1}$ approximation,
based on the properties of this function we propose a specific way to
compute $\bar{z}^{k}$. Below, we describe the updating $\theta $ procedure
for DCA1 with Capped-$\ell _{1}$ approximation.\newline

%\begin{table}
%\begin{tabular}{p{17cm}l}
\noindent \textbf{Initialization:} $\Delta \theta > 0, \alpha^0 = +\infty$, $%
\theta^0 = 0$, $k = 0$. Let $(x^0,b^0,\xi^0,\zeta^0)$ be a solution of the
linear problem \eqref{prob:fsDC}. \newline
\textbf{Repeat} \newline
1. $I = \{i:0 < |x^k_i| < \alpha^k\}$, $\alpha^{k+1} = 
\begin{cases}
\max\{|x^k_i|:i\in I\} & \text{if } I \ne \emptyset, \\ 
\alpha^k & \text{otherwise.}%
\end{cases}%
$ \newline
2. Compute $\theta^{k+1} = \min\left\{\theta^*, \max \left\{ \frac{1}{%
\alpha^{k+1}}, \theta^k + \Delta \theta \right\} \right\}$. \newline
3. Compute $\overline{z}^k$: For $i=1,\dots,n$

\begin{enumerate}
\item[-] If $|x^k_i| < \alpha^{k+1}$, $\overline{z}^k_i = 0$.

\item[-] If $|x^k_i| > \alpha^{k+1}$, $\overline{z}^k_i = \text{sign}(x^k_i)
\lambda \theta$.

\item[-] If $|x^k_i| = \alpha^{k+1}$, compute $F_i^-$ (resp. $F_i^+$) the
left (resp. right) derivative of the function $u(x,b)$ w.r.t. the variable $%
x_i$ at $x^k_i$, where 
\begin{equation*}
u(x,b) = (1-\lambda )\left(\frac{1}{N_A}\|\max\{0,-Ax+eb +e\}\|_1 +\frac{1}{%
N_B} \|\max\{0,Bx-eb +e\}\|_1 \right)+\lambda \sum_{j=1}^n r(x_j).
\end{equation*}
Then $\overline{z}^k_i = 
\begin{cases}
\text{sign}(x^k_i) \lambda \theta^{k+1} & \text{if } x^k_i (F_i^- + F_i^+) <
0 \\ 
0 & \text{ortherwise.}%
\end{cases}%
$
\end{enumerate}

4. Solve the linear problem \eqref{Pk:fs:DCA1} with $\eta = \theta^{k+1}$ to
obtain $(x^{k+1},b^{k+1},\xi^{k+1},\zeta^{k+1})$. \newline
5. $k=k+1$. \newline
\textbf{Until:} Convergence of $\{x^k,b^k,\xi^k,\zeta^k\}$.\newline
%\end{tabular}
%\end{table}

In the above procedure, the computation of $\overline{z}^k$ is slightly
different from formula given in Table \ref{tab:DCA1}. When $|x^k_i| =
\alpha^{k+1}$, $\partial r(x^k_i)$ is an interval. Taking into account
information of derivative of $u$ w.r.t. the variable $x_i$ at $x^k_i$ helps
us judge which between two extreme values of $\partial r(x^k_i)$ may give
better decrease of algorithm.

At each iteration, the value of $\theta$ increases at least $\Delta \theta >
0$ as long as it does not exceed $\theta^*$ -- the value from which the
problems \eqref{pbmain} and \eqref{prob:fsDC} are equivalent. Moreover, we
know that for each fixed $\theta$, DCA1 has finite convergence. Hence, the
above procedure also possesses finite convergence property.

If $F(x^{k+1},b^{k+1},\xi^{k+1},\zeta^{k+1}) = F(x^k,b^k,\xi^k,\zeta^k)$
then $(x^k,b^k,\xi^k,\zeta^k)$ is a critical point of \eqref{prob:fsDC} with 
$r = r_{cap}$ and $\theta = \theta^{k+1}$. In addition, if $\alpha^{k+1} =
\alpha^k$, which means that $|x^k_i| \ge \alpha^k \ge \frac{1}{\theta^k}$
for any $i \in supp(x^k)$, then $(x^k,b^k,\xi^k,\zeta^k)$ is a critical
point of \eqref{prob:fsDC} for all $\theta \ge \theta^{k+1}$.

%===========================================================================================================

\subsection{Computational experiments}

\subsubsection{Datasets}

Numerical experiments were performed on several real-word datasets taken
from well-known UCI data repository and from challenging feature-selection
problems of the NIPS 2003 datasets. In Table \ref{tab:data}, the number of
features, the number of points in training and test set of each dataset are
given. The full description of each dataset can be found on the web site of
UCI repository and NIPS 2003.

\begin{table}[htbp]
\caption{Datasets}
\label{tab:data}\centering
%\scriptsize{
\begin{tabular}{llll}
\hline
Data & \#features & \# points in training set & \# points in test set \\ 
\hline
Ionosphere & 34 & 234 & 117 \\ 
WPBC (24 months) & 32 & 104 & 51 \\ 
WPBC (60 months) & 32 & 380 & 189 \\ 
Breast Cancer & 24481 & 78 & 19 \\ 
Leukemia & 7129 & 38 & 34 \\ 
Arcene & 10000 & 100 & 100 \\ 
Gisette & 5000 & 6000 & 1000 \\ 
Prostate & 12600 & 102 & 21 \\ 
Adv & 1558 & 2458 & 821 \\ \hline
\end{tabular}%
%
%
%
%
%
%
%
%}
\end{table}

%=================================

\subsubsection{Set up experiments}

All algorithms were implemented in the Visual C++ 2008, and performed on a
PC Intel i5 CPU650, 3.2 GHz of 4GB RAM. CPLEX 12.2 was used for solving
linear/quadratic programs. We stop all algorithms with the tolerance $%
\epsilon =10^{-5} $. The non-zero elements of $x$ are determined according
to whether $|x_{i}|$ exceeds a small threshold ($10^{-5}$).

For the comparison of algorithms, we are interested in the accuracy (PWCO -
Percentage of Well Classified Objects) and the sparsity of obtained solution
as well as the rapidity of the algorithms. $POWC_1$ (resp. $POWC_2$) denotes
the POWC on training set (resp. test set). The sparsity of solution is
determined by the number (and percentage) of selected features ($SF$) while
the rapidity of algorithms is measured by the CPU time in seconds.

%===============================================================================

\subsubsection{Experiment 1}

In this experiment, we study the effectiveness of the three proposed DCA
schemes DCA1, DCA2 and DCA3 for a same approximation. Capped-$\ell
_{1}$ approximation is chosen for this experiment. 
For each dataset, the same value of $\lambda $ is used for all algorithms.
We set $\lambda =0.1$ for first three datasets (\textit{Ionosphere,
WPBC(24), WPBC(60)}) while $\lambda =0.001$ is used for five large datasets (%
\textit{Adv, Arcene, Breast, Gisette, Leukemia}). 
To chose a suitable value of $\theta $ for each algorithm DCA1, DCA2 and DCA3, we perform
them by $10$ folds cross-validation procedure on the set $\{0.001,0.005,0.01,0.1,0.5,1,2,3,5,10,20,50,100,500\}$
and then take the value corresponding to the best results.
Once $\theta$ is chosen (its value is given in   Table \ref{tab:CompareDCA1-2-3}),  
 we perform  these algorithms  $10$ times from $10$ random starting solutions and report,
in the columns 3 - 5 of   Table \ref{tab:CompareDCA1-2-3},
the mean and standard deviation of the accuracy, the sparsity of obtained solutions and CPU time of the algorithm.

We are also interested on
the efficiency of Updating $\theta $ procedure. For this purpose, we compare
two versions of DCA1    - with and without Updating $\theta $ procedure
(in case of Capped-$\ell
_{1}$ approximation).
For a fair comparison, we first
run DCA1 with Updating $\theta $ procedure and then perform DCA1 with the fixed value $\theta ^{\ast }$
which is
the last value of $\theta $ when the Updating $\theta $ procedure stops.
Computational results are reported in the columns 6 (DCA1 with fixed $\theta $) and $7$ (DCA1 with Updating $\theta $ procedure) of   Table \ref{tab:CompareDCA1-2-3}.

To evaluate the globality of the DCA based algorithms we use CPLEX 12.2  for
globally solving the exact formulation problem (\ref{l0 obj penal}) via exact penalty techniques
(Mixed 0-1 linear programming problem) and report the results in the last column of Table \ref{tab:CompareDCA1-2-3}.

Bold
values in the result tables correspond to best results for each data instance.

%{\centering
\begin{table}[tbh]
\caption{Comparison of different DCA schemes for Capped-$\ell_1$
approximation}
\label{tab:CompareDCA1-2-3}\centering
\setlength{\tabcolsep}{0.2cm} {\scriptsize {\ 
\begin{tabular}{llllllll}
\hline
&  & DCA1 & DCA2 & DCA3 & DCA1 & DCA1 with  & CPLEX \\ 
&  &  &  &  & with $\theta^*$ & Updating $\theta$  &  \\ \hline
Ionosphere & $\theta$ & 3 & 5 & 3 & 4,3 & 4,3 &  \\ 
& $POWC_1$ & 86,2 $\pm$1,5 & 85,2 $\pm$1,7 & 84,8 $\pm$1,8 & 84,0 $\pm$1,2 & 
\textbf{90,2} & \textbf{90,2} \\ 
& $POWC_2$ & 80,3 $\pm$1,6 & 75,3 $\pm$1,3 & 74,3 $\pm$1,3 & 80,3 $\pm$1,4 & 
\textbf{83,7} & \textbf{83,7} \\ 
& FS & 3,5 (10,3\%) & 3,8 (11,2\%) & 3,8 (11,2\%) & 3,2 (9,4\%) & \textbf{2
(5,9\%)} & \textbf{2 (5,9\%)} \\ 
& CPU & \textbf{0,2} & \textbf{0,2} & 0,7 & 0,3 & 0,6 & 2,5 \\ 
WPBC(24) & $\theta$ & 1 & 0,1 & 0,1 & 661 & 661 &  \\ 
& $POWC_1$ & \textbf{84,3 $\pm$1,4} & 75,3 $\pm$1,3 & 77,4 $\pm$1,1 & 75,3 $%
\pm$1,2 & 77,4 & 77,4 \\ 
& $POWC_2$ & 77,9 $\pm$1,4 & \textbf{80,2 $\pm$1,6} & 79,3 $\pm$1,6 & 72,3 $%
\pm$1,2 & 77,2 & 78,4 \\ 
& FS & 7,4 (23,1\%) & 8,5 (26,6\%) & 8,5 (26,6\%) & 8,4 (26,3\%) & 8 (25,0\%)
& \textbf{7 (21,9\%)} \\ 
& CPU & \textbf{0,2} & 0,3 & 0,8 & \textbf{0,2} & 1,1 & 6,4 \\ 
WPBC(60) & $\theta$ & 1 & 3 & 3 & 347 & 347 &  \\ 
& $POWC_1$ & 96,2 $\pm$1,3 & 95,2 $\pm$1,3 & 95,2 $\pm$1,3 & \textbf{98,2 $%
\pm$1,3} & 96 & \textbf{96} \\ 
& $POWC_2$ & 92,5$\pm$1,4 & 92,5$\pm$1,4 & 90,8$\pm$1,8 & \textbf{96,8$\pm$%
1,8} & 95,3 & \textbf{95,3} \\ 
& FS & 4,7 (15,7\%) & 5,5 ( 18,3\%) & 5,7 (19,0\%) & 8,9 (29,7\%) & \textbf{%
3 (10,0\%)} & \textbf{3 (10,0\%)} \\ 
& CPU & \textbf{0,4} & 0,6 & 1,6 & 0,5 & 1 & 1,8 \\ 
Breast & $\theta$ & 5 & 10 & 2 & 435 & 435 &  \\ 
& $POWC_1$ & 95,1$\pm$1,3 & 94,2$\pm$1,3 & 95,2$\pm$1,4 & 93,2$\pm$1,6 & 
\textbf{96,8} & N/A \\ 
& $POWC_2$ & 68,3$\pm$1,2 & 67,3$\pm$1,2 & \textbf{70,3$\pm$1,6} & 66,3$\pm$%
1,1 & 65,1 & N/A \\ 
& FS & 32,6 (0,1\%) & 47,5 (0,2\%) & 43,5 (0,2\%) & 52,3 (0,2\%) & \textbf{%
28 (0,1\%)} & N/A \\ 
& CPU & 30 & \textbf{25} & 78 & 79 & 76 & 3600 \\ 
Leukemia & $\theta$ & 5 & 5 & 5 & 178 & 178 &  \\ 
& $POWC_1$ & \textbf{100} & \textbf{100} & \textbf{100} & \textbf{100} & 
\textbf{100} & N/A \\ 
& $POWC_2$ & \textbf{97,2$\pm$0,4} & 97,1$\pm$0,4 & 96,8$\pm$0,3 & 94,8$\pm$%
0,7 & \textbf{97,2} & N/A \\ 
& FS & 8,2 (0,1\%) & 8,5 (0,1\%) & 8,5 (0,1\%) & 12,0 (0,2\%) & \textbf{8
(0,1\%)} & N/A \\ 
& CPU & \textbf{10} & \textbf{10} & 75 & 14 & 17 & 3600 \\ 
Arcene & $\theta$ & 0,1 & 0,01 & 3 & 328 & 328 &  \\ 
& $POWC_1$ & \textbf{100} & \textbf{100} & \textbf{100} & \textbf{100} & 
\textbf{100} & N/A \\ 
& $POWC_2$ & 80$\pm$1,6 & \textbf{82$\pm$1,1} & 81$\pm$1,9 & 61$\pm$1,1 & 70
& N/A \\ 
& FS & 78,5 ( 0,79\%) & 82,4 (0,82\%) & 82,4 (0,82\%) & 35 (0,35\%) & 
\textbf{32 (0,32\%)} & N/A \\ 
& CPU & \textbf{21} & 26 & 273 & 30 & 118 & 3600 \\ 
Gisette & $\theta$ & 0,1 & 0,01 & 0,1 & 735 & 735 &  \\ 
& $POWC_1$ & \textbf{92,5$\pm$1,3} & 88,5$\pm$1,3 & 88,5$\pm$1,3 & 90,5$\pm$%
1,2 & 91,2 & N/A \\ 
& $POWC_2$ & \textbf{85,3$\pm$1,2} & 83,4$\pm$1,2 & 83,1$\pm$1,6 & 84,1$\pm$%
1,1 & 83,2 & N/A \\ 
& FS & 339,4 (6,8\%) & 330,7 (6,6\%) & 332,2 (6,6\%) & 456 (9,1\%) & \textbf{%
123 (2,5\%)} & N/A \\ 
& CPU & 87 & \textbf{65} & 253 & 71 & 387 & 3600 \\ 
Adv & $\theta$ & 0,1 & 0,01 & 0,1 & 321 & 321 &  \\ 
& $POWC_1$ & 95,5$\pm$1,5 & 92,3$\pm$1,5 & 95,3$\pm$1,5 & 92,3$\pm$1,2 & 
\textbf{97,2} & N/A \\ 
& $POWC_2$ & \textbf{94,2$\pm$1,1} & 93,2$\pm$1,5 & 93,1$\pm$1,2 & 92,1$\pm$%
1,6 & 93,2 & N/A \\ 
& FS & 5,4 (0,35\%) & 6,2 (0,40\%) & 6,4 (0,41\%) & 6,5 (0,42\%) & \textbf{5
(0,32\%)} & N/A \\ 
& CPU & \textbf{2,1} & 2,4 & 7,8 & 2,3 & 4,6 & 3600 \\ \hline
\end{tabular}%
} }
\end{table}
%}

\medskip

\noindent {\bf   Comments on numerical results}

\begin{itemize}
\item Comparison between DCA1, DCA2 and DCA3 (columns $3$ - $5$)

\begin{itemize}
\item Concerning the correctness, DCA1 furnishes the best solution out of
the three algorithms for all datasets (with an important gain of $6,9\%$ on
dataset WPBC(24)). DCA2 and DCA3 are comparable in terms of correctness.

\item As for the sparsity of solution, all the three DCA schemes reduce
considerably the number of selected features (up to $99\%$ on large datasets
such as \textit{Arcene}, \textit{Breast}, \textit{Leukemia}, \ldots).
Moreover, DCA1 gives better results than DCA2/DCA3 on $6$ out of $7 $
datasets.

\item In terms of CPU Time, DCA1 and DCA2 are faster than DCA3. This is
natural, since at each iteration, the first two algorithms only require solving one
linear program while DCA3 has to solve one convex quadratic program. DCA1 is
somehow a bit faster than DCA2 on $5$ out $7$ datasets.

\item Overall, we see that DCA1 is better than DCA2 and DCA3 on all the three evaluation criteria.
Hence, it seems to be that the first DCA scheme  is more appropriate
than the other two for Capped-$\ell_1$ approximation.

\end{itemize}

\item DCA1 with and without Updating $\theta$ procedure (columns $3$, $6$ and $7$):

\begin{itemize}
\item For all datasets, Updating $\theta $ procedure gives a better solution
(on both accuracy and sparsity) than DCA1 with $\theta =\theta
^{*}$. 

\item Except for dataset \textit{WPBC(24)}, Updating $\theta $ procedure is
better than DCA1 with $\theta $ chosen by $10$ folds cross-validation in
terms of sparsity of solution. As for accuracy, the two algorithms are
comparable.

\item  The choice of the value of $\theta$ defining the approximation
function is very important. Indeed, the  results given in columns $3$ and $6$ are  far different,
due to the fact that, the value of $\theta $ chosen by $10$
folds cross-validation is much more smaller than $\theta ^{\ast }$.
These results confirm our analysis in Subsection~\ref{sect:update_theta} above:
while the approximate function would be better with larger values of $\theta$,
the approximate problems become more difficult and it can be happened that the obtained solutions
are worse when $\theta$ is quite large. To overcome this ''contradiction'' between theoretical
and computational aspects, the proposed Updating $\theta$ procedure seems to be efficient.
  
\end{itemize}

\item Comparison between DCA based algorithms and CPLEX for solving
the original problem (\ref{l0 obj penal})

\begin{itemize}
\item For \textit{Ionosphere} and \textit{WPBC(60)}, Updating $\theta $
procedure for Capped-$\ell _{1}$ gives exactly the same accuracy and the
same number of selected features as CPLEX. It means that Updating $\theta $
procedure reaches the global solution for those two datasets. For \textit{%
WPBC(24)}, the two obtained solutions are slightly different (same accuracy
on training set and $7$ selected features for CPLEX instead of $8$ for
Updating $\theta $ procedure).

\item For large datasets, CPLEX can't furnish a solution with a CPU Time
limited to $3600$ seconds while DCA based algorithms give a good
solution in a short time.
\end{itemize}
\end{itemize}

%==========================================

\subsubsection{Experiment 2}

In the second experiment, we study the effectiveness of different
approximations of $\ell_0$. We use DCA1 for all approximations except PiL
for which DCA4 is applied (cf. Section \ref{new_approximation}).

In this experiment, for the trade-off parameter $\lambda $, we used the
following set of candidate values \linebreak $%
\{0.001,0.002,0.003,0.004,0.05,0.1,0.25,0.4,0.7,0.9\}$. The value of
parameter $\theta $ is chosen in the set \linebreak $%
\{0.001,0.005,0.01,0.1,0.5,1,2,3,5,10,20,50,100,500\}$. The second parameter 
$a$ of SCAD approximation is taken from $\{1,2,3,5,10,20,30,50,100\}$. For
each algorithm, we firstly perform a $10$-folds cross-validation to
determine the best set of parameter values. In the second step, we run each
algorithm, with the chosen set of parameter values in step 1, $10$ times
from $10$ starting random points and report the mean and standard deviation
of each evaluation criterion. The comparative results are reported in Table \ref%
{tab:CompareApproximation}.

%\begin{landscape}
\begin{table}[tbh]
\caption{Comparison of different approximations}
\label{tab:CompareApproximation}\centering
\setlength{\tabcolsep}{0.2cm} {\scriptsize {\ \centering
\begin{tabular}{lllllllll}
\hline
&  & DCA1 & DCA1 & DCA1 & DCA1 & DCA1 & DCA1 & DCA4 \\ 
&  & Capped-l1 & SCAD & Exp & lp+ & lp- & Log & PiL \\ \hline
Ionosphere & $POWC_1$ & \textbf{86,2 $\pm$1,5} & 80,1 $\pm$1,4 & 82,1 $\pm$%
1,4 & 81,5 $\pm$1,3 & 83,1 $\pm$1,4 & 81,2 $\pm$1,4 & 83,2 $\pm$1,4 \\ 
& $POWC_2$ & 80,3 $\pm$1,6 & 73,5 $\pm$1,6 & \textbf{84,8 $\pm$1,3} & 75,1 $%
\pm$1,1 & 70,3 $\pm$1,2 & 73,1 $\pm$2,1 & 83,5 $\pm$1,6 \\ 
& SF & 3,5 (10,3\%) & 3,1 (9,1\%) & \textbf{2,3 (6,8\%)} & 3,8 (11,2\%) & 
3,1 (9,1\%) & 3,3 (9,7\%) & 2,6 (7,6\%) \\ 
& CPU & \textbf{0,2} & 0,3 & 0,3 & \textbf{0,2} & 0,3 & 0,15 & \textbf{0,2}
\\ 
WPBC(24) & $POWC_1$ & \textbf{84,3 $\pm$1,4} & 77 $\pm$1,3 & \textbf{84,3 $%
\pm$1,5} & 81,3 $\pm$1,2 & 81,9 $\pm$1,2 & 71,3 $\pm$1,4 & 84,2 $\pm$1,4 \\ 
& $POWC_2$ & 77,9 $\pm$1,4 & \textbf{79,3 $\pm$1,6} & 74,3 $\pm$1,9 & 78,4 $%
\pm$1,2 & \textbf{79,8 $\pm$1,1} & 68,4 $\pm$1,6 & 78,5 $\pm$1,4 \\ 
& SF & 7,4 ( 23,1\%) & 8,1 (25,3\%) & \textbf{7,2 (22,5\%)} & 7,8 (24,4\%) & 
7,5 (23,4\%) & 7,2 (22,5\%) & 7,6 (23,8\%) \\ 
& CPU & \textbf{0,1} & 0,2 & \textbf{0,1} & 0,2 & 0,2 & 0,2 & 0,2 \\ 
WPBC(60) & $POWC_1$ & \textbf{97,2 $\pm$1,3} & 93,5 $\pm$1,7 & 95,1 $\pm$1,6
& 93 $\pm$1,2 & 94,5 $\pm$1,1 & 89 $\pm$1,5 & 95,2 $\pm$1,3 \\ 
& $POWC_2$ & \textbf{93,5$\pm$1,4} & 89,1 $\pm$1,9 & 92,3 $\pm$1,9 & 85 $\pm$%
1,2 & 90,6 $\pm$1,2 & 80 $\pm$1,6 & 88,5$\pm$1,1 \\ 
& SF & 5,4 (18,0\%) & \textbf{5,2 (17,3\%)} & \textbf{5,2 (17,3\%)} & 5,9
(19,7\%) & 5,7 (19,0\%) & 5,4 (18,0\%) & 5,4 (18,0\%) \\ 
& CPU & \textbf{0,4} & \textbf{0,4} & \textbf{0,4} & 0,5 & \textbf{0,4} & 0,6
& 0,5 \\ 
Breast & $POWC_1$ & \textbf{98,7$\pm$1,3} & 91,9$\pm$1,4 & 96,3$\pm$1,4 & 
93,2$\pm$1,4 & 91,9$\pm$1,4 & 91,2$\pm$1,4 & 92,4$\pm$1,2 \\ 
& $POWC_2$ & 68,3$\pm$1,2 & 69,1$\pm$1,6 & \textbf{70\%$\pm$1,4} & 67,3$\pm$%
1,1 & 69,1$\pm$1,6 & 66,3$\pm$1,2 & 71,3$\pm$1,4 \\ 
& SF & 35,3 (0,1\%) & 37,0 (0,2\%) & 37,4 ((0,2\%) & 40,3 ((0,2\%)) & 37,0
(0,2\%) & 45,3 (0,2\%) & \textbf{26,5 (0,1\%)} \\ 
& CPU & 30 & 31 & \textbf{25} & 32 & 31 & 31 & 31 \\ 
Leukemia & $POWC_1$ & \textbf{100} & 98,3 & \textbf{100} & \textbf{100} & 
98,3 & \textbf{100} & \textbf{100} \\ 
& $POWC_2$ & \textbf{97,2$\pm$0,4} & 88,3$\pm$0,6 & 97,2$\pm$0,5 & 90,1$\pm$%
0,8 & 92,3$\pm$0,6 & 90,1$\pm$0,3 & 89,2$\pm$0,9 \\ 
& SF & \textbf{8,2 (0,1\%)} & \textbf{8,2 (0,1\%)} & 8,3 (0,1\%) & 27,9
(0,4\%) & \textbf{8,2 (0,1\%)} & 27,3 (0,4\%) & 12,8 (0,2\%) \\ 
& CPU & 25 & 21 & 23 & 27 & 21 & 28 & 22 \\ 
Arcene & $POWC_1$ & \textbf{100} & \textbf{100} & \textbf{100} & \textbf{100}
& \textbf{100} & \textbf{100} & \textbf{100} \\ 
& $POWC_2$ & \textbf{80$\pm$1,6} & 78,2$\pm$1,9 & 78,9$\pm$1,4 & 78,9$\pm$1,1
& 74,2$\pm$1,2 & 72,9$\pm$1,6 & 79$\pm$1,2 \\ 
& SF & 78,5(0,79\%) & 72,5 (0,73\%) & \textbf{69,4 (0,69\%)} & 71,1 (0,71\%)
& 73,1 (0,73\%) & 72,3 (0,72\%) & 83,5 (0,84\%) \\ 
& CPU & \textbf{21} & 31 & 34 & 31 & 31 & 30 & 23 \\ 
Gisette & $POWC_1$ & \textbf{92,5$\pm$1,3} & 87,3$\pm$1,5 & 87,3$\pm$2,1 & 
88,3$\pm$2,4 & 86,4$\pm$1,2 & 86,3$\pm$2,1 & 89,5$\pm$1,4 \\ 
& $POWC_2$ & \textbf{85,3$\pm$1,2} & 81,2$\pm$1,4 & 82,2$\pm$1,2 & 77,3$\pm$%
1,3 & 82,2$\pm$1,5 & 79,3$\pm$1,4 & 84,5$\pm$1,2 \\ 
& SF & 339,4 (6,8\%) & 340,1 (6,8\%) & \textbf{330,1 (6,6\%)} & 341,5 (6,8\%)
& 342,3 (6,8\%) & 354,5 (7,8\%) & 344,3 (6,9\%) \\ 
& CPU & 87 & 81 & 98 & 102 & 81 & 102 & \textbf{72} \\ 
Adv & $POWC_1$ & \textbf{95,5$\pm$1,5} & 94,2$\pm$1,3 & \textbf{95,5$\pm$1,1}
& 93,2$\pm$1,1 & 92,2$\pm$1,5 & 95,2$\pm$1,6 & 94,1$\pm$1,8 \\ 
& $POWC_2$ & 94,2$\pm$1,1 & 94,4$\pm$1,9 & \textbf{94,5$\pm$1,5} & 80,2$\pm$%
1,5 & 88,1$\pm$1,2 & 92,2$\pm$1,5 & 90,2$\pm$1,1 \\ 
& SF & 5,4 (0,35\%) & 8,1 (0,52\%) & \textbf{5,1 (0,33\%)} & 12,3 (0,79\%) & 
6,4 (0,41\%) & 21,3 (2,8\%) & 7,4 (0,47\%) \\ 
& CPU & \textbf{2,1} & 2,5 & 2,3 & 2,8 & 2,5 & 2,8 & 3,1 \\ \hline
\end{tabular}%
} }
\end{table}
%
%\end{landscape}
We observe that:

\begin{itemize}
\item In terms of sparsity of solution, the quality of all approximations
are comparable. All the algorithms reduce considerably the number of
selected features, especially for $5$ large datasets (\textit{Adv, Arcene,
Breast, Gisette, Leukemia}). For \textit{Breast} dataset, our algorithms
select only about thirty features out of $24481$ while preserving very good
accuracy (up to $98,7\%$ correctness on train set).

\item Capped-$\ell _{1}$ is the best in terms of accuracy: it gives best
accuracy on all train sets and $4$ out of $7$ test sets. The quality of
other approximations are comparable.

\item The CPU time of all the algorithms is quite small: less than $34$ seconds
(except for  \textit{Gisette}, CPU time of DCAs varies from $72$ to $102$ seconds).
\end{itemize}

%==========================================

\section{ Conclusion}

\label{conclu}

We have intensively studied DC programming and DCA for sparse optimization
problem including the zero-norm in the objective function. DC approximation
approaches have been investigated from both a theoretical and an algorithmic
point of view. Considering a class of DC approximation functions of the
zero-norm including all usual sparse inducing approximation functions, we
have proved several novel and interesting results: the consistency between
global (resp. local) minimizers of the approximate problem and the original
problem, the equivalence between these two problems (in the sense that, for
a sufficiently large related parameter, any optimal solution to the
approximate problem solves the original problem) when the feasible set is a
bounded polyhedral convex set and the approximation function is concave, the
equivalence between Capped-$\ell _{1}$ (and/or SCAD) approximate problems
and the original problem with sufficiently large parameter $\theta $ (in the
sense that they have the same set of optimal solutions), the way to compute
such parameters $\theta $ in some special cases, and a comparative analysis
between usual sparse inducing approximation functions. Considering the three
DC formulations for a common model to all concave approximation functions we
have developed three DCA schemes and showed the link between our algorithms
with standard approaches. It turns out that all standard nonconvex
approximation algorithms are special versions of our DCA based algorithms. A
new DCA scheme has been also investigated for the DC approximation
(piecewise linear) which is not concave as usual sparse inducing functions.
Concerning the application to feature selection in SVM, among the four DCA
schemes, three (resp. one) require solving one linear (resp. convex
quadratic) program at each iteration and enjoy interesting convergence
properties (except Algorithm 3): they converge after finitely many
iterations to a local solution in almost all cases. Numerical experiments
confirm the theoretical results: the Capped-$\ell _{1}$ has been identified
as the \textquotedblright winner\textquotedblright\ among sparse inducing
approximation functions.

Our unified DC programming framework shed a new light on sparse nonconvex
programming. It permits to establish the crucial relations among existing
sparsity-inducing methods and therefore to exploit, in an elegant way, the
nice effect of DC decompositions of objective functions. The four algorithms
can be viewed as an $\ell _{1}$-perturbed algorithm / reweighted-$\ell _{1}$
algorithm (intimately related to the $\ell _{1}$-penalized LASSO approach / reweighted-$\ell _{2}$
algorithm in case of convex objective functions. It
specifies the flexibility/versatility of these theoretical and algorithmic
tools. These results should enhance deeper developments of DC programming
and DCA, in order to efficiently model and solve real-world nonconvex sparse
optimization problems, especially in the large-scale setting.

%=================================================================================

%\bibliographystyle{elsarticle-num}

\end{document}